\newcommand{\bZ}{\mathbb{Z}}
\newcommand{\cp}{\,\square\,}
\DeclareMathOperator{\Aut}{Aut}
\DeclareMathOperator{\Cay}{Cay}
\theoremstyle{plain}
\newtheorem{Theorem}{Theorem}
\newtheorem{Corollary}[Theorem]{Corollary}
\newtheorem{Proposition}[Theorem]{Proposition}
\newtheorem{Lemma}[Theorem]{Lemma}
\theoremstyle{remark}
\theoremstyle{definition}
\newcommand{\comment}[1]{}
\begin{document}

\title{Vertex-transitive graphs and their arc-types}

\author{
  Marston Conder,
  Toma\v{z} Pisanski,  and
  Arjana \v{Z}itnik
}

\date{6 May 2015}

\maketitle

\begin{abstract}
Let $X$ be a finite vertex-transitive graph of valency $d$, and let $A$ be the full 
automorphism group of $X$.  Then the {\em arc-type\/} of $X$ is defined in terms 
of the sizes of the orbits of the action of the stabiliser $A_v$ of a 
given vertex $v$ on the set of arcs incident with $v$.  
Specifically, the arc-type is the partition of $d$ as the sum  
$n_1 + n_2 + \dots + n_t + (m_1 + m_1) + (m_2 + m_2) + \dots + (m_s + m_s)$,
where $n_1, n_2, \dots, n_t$ are the sizes of the self-paired orbits, and  
$m_1,m_1, m_2,m_2, \dots, m_s,m_s$ are the sizes of the non-self-paired 
orbits, in descending order.  
In this paper, we find the arc-types of several families of graphs. 
Also we show that the arc-type of a Cartesian product of two `relatively prime'  
graphs is the natural sum of their arc-types. 
Then using these observations, we show that with the exception of $1+1$ and $(1+1)$, 
every partition as defined above is \emph{realisable}, in the sense that there exists 
at least one graph with the given partition as its arc-type.
\medskip

\noindent
{\bf Keywords}: symmetry type, vertex-transitive graph, arc-transitive graph, 
                Cayley graph,  Cartesian product, covering graph.\\

\noindent
{\bf Mathematics Subject Classification (2010)}:
05E18,  
20B25,  
05C75,  
05C76.   


\end{abstract}

\section{Introduction}
\label{sec:intro}
Vertex-transitive graphs hold a significant place in mathematics, dating back to 
the time of first recognition of the Platonic solids, and also now in other disciplines 
where symmetry (and even other properties such as rigidity) play an important role, 
such as fullerene chemistry, and interconnection networks. 

A major class of vertex-transitive graphs is formed by Cayley graphs, which 
represent groups in a very natural way.  (For example, the skeleton of the $C_{60}$ 
molecule is a Cayley graph for the alternating group $A_5$.)  It is easy to test 
whether a given vertex-transitive graph is a Cayley graph for some group: 
by a theorem of Sabidussi \cite{Sabidussi}, this happens if and only if the 
automorphism group of the graph contains a subgroup that acts regularly on vertices. 
Vertex-transitive graphs that fail this test are relatively rare, the Petersen graph 
being a famous example.  A recent study of small vertex-transitive graphs of 
valency 3 in \cite{PSVvt}  shows that among $111360$ such graphs of order 
up to 1280, only $1434$ of them are not Cayley graphs. 

For almost every Cayley graph, the automorphism group itself is a
vertex-regular subgroup $G$ (see \cite{BabaiGodsil}).  Any such graph 
is called a \emph{graphical regular representation} of the group $G$, or briefly, a {\em GRR.} 
In a book devoted to the study of $3$-valent graphs that are GRRs, 
Coxeter, Frucht and Powers \cite{Coxeter} classified vertex-transitive $3$-valent 
graphs into four types, according to the action of the automorphism group on 
the {\em arcs} (ordered edges) of the graph.  One class consists of those which 
are arc-transitive, another of those in which there are two orbits 
on arcs, and the other two are two classes of GRRs. 
Arc-transitive graphs are also called {\em symmetric.} 

Symmetric graphs have been studied quite intensively, especially in the $3$-valent case, 
by considering the action of the automorphism group on non-reversing walks of given length $s$, 
known as {\em s-arcs.} 
For example, it was shown by Tutte \cite{Tutte0,Tutte1} 
that every finite symmetric $3$-valent graph is $s$-arc-regular for some $s \le 5$, 
and hence that order of the stabiliser of a vertex in the automorphism group of 
every such graph is bounded above by $48$.  Tutte's theorem and related work 
have been used to determine all symmetric $3$-valent graphs on up to $10,000$ 
vertices; see \cite{Foster,ConderFosterCensus,Conder10000}.  
Also Tutte's seminal theorem was generalised much later by Weiss, who used the classification 
of doubly-transitive permutation groups to prove that every finite symmetric graph of valency 
greater than $2$ is $s$-arc-transitive but not $(s+1)$-arc-transitive for some $s \le 7$, 
and in particular, that there are no $8$-arc-transitive finite graphs; see \cite{Weiss}. 

Another important class of vertex-transitive graphs was investigated by Tutte \cite{Tutte} 
and Bouwer \cite{Bouwer}, namely the graphs that are vertex- and edge-transitive 
but not arc-transitive.   These are now called \emph{half-arc-transitive graphs}. 
Every such graph has even valency, and its automorphism group has two orbits on arcs, 
with every arc $(v,w)$ and its reverse $(w,v)$ lying in different orbits; see \cite[p.~59]{Tutte}. 
Bouwer \cite{Bouwer} constructed a family of examples containing one half-arc-transitive graph 
of each even valency greater than $2$, and the first and third author of this paper have recently 
proved that other examples of the  type considered by Bouwer produce infinitely many of 
every such valency; see~\cite{ConderZitnik}. 

According to Coxeter, Frucht and Powers \cite{Coxeter}, the idea of classifying cubic 
vertex-transitive graphs with respect to the arc-orbits of the automorphism group 
originated in some work by Ronald Foster over half a century ago, which was presented 
to his friends in the form of unpublished notes. The classification is carried out rigorously in their 
book \cite{Coxeter}. And although Foster's original idea of `zero-symmetric' graphs was 
later expanded to other valencies under the term GRR, the classification by arc-orbits 
itself was never extended in a systematic way to graphs of other valencies. 
This paper provides a remedy for that omission.
By introducing the concept of {\em arc-type\/}, we provide a language that can 
be used to unify the notions of arc-transitivity and half-arc-transitivity and 
the above-mentioned classification of symmetric $3$-valent graphs, and also 
to extend this classification to vertex-transitive graphs of higher valency.  

We can now define the notion of \emph{arc-type} for a vertex-transitive graph.
Let $X$ be a $d$-valent vertex-transitive graph, with automorphism group $A$.  
We first make a critical observation about the pairing of arc-orbits.  
The orbit of an arc $(v,w)$ under the action of $A$ 
can be \emph{paired} with the orbit of $A$ containing the reverse arc $(w,v)$, and if these 
orbits are the same, then the given orbit is said to be \emph{self-paired}. 
This is analogous with the definition of paired sub-orbits for transitive permutation groups. 
In particular, it applies also to the orbits of the stabiliser $A_v$ in $A$ of a vertex $v$ 
on the arcs emanating from $v$. The orbit of $A_v$ containing the arcs $(v,w)$ 
and $(v,w')$ are paired if the arc $(v,w')$ lies in the same orbit of $A$ as the arc $(w,v)$.  

We define the arc-type of $X$ as the partition $\Pi$ of $d$ as the sum  
\begin{equation}\tag{$\dagger$} 
\label{arctype} 
\Pi \ = \ n_1 + n_2 + \dots + n_t + (m_1 + m_1) + (m_2 + m_2) + \dots + (m_s + m_s)
\end{equation} 
where $n_1, n_2, \dots, n_t$ are the sizes of the self-paired arc-orbits of $A_v$ 
on the arcs emanating from $v$, 
and  $m_1,m_1, m_2,m_2, \dots, m_s,m_s$ are the sizes of the non-self-paired 
arc-orbits, in descending order.  

Similarly, the {\em edge-type\/} of $X$ is the partition of $d$ as the sum of the sizes of the 
orbits of $A_v$ on edges incident with $v$, and can be found by simply replacing each 
bracketed term $(m_j+m_j)$ by $2m_j$ (for $1 \le j \le s$). 

The number of possibilities for the arc-type $\Pi$ depends on the valency $d$. 
For $d = 1$ there is just one possibility, namely with $n_1 = 1$, and this occurs for 
the complete graph $K_2$. 
For $d = 2$, in principle there could be three possibilities, namely $2$, $1+1$ and $(1+1)$, 
but every 2-valent connected graph is a cycle, and is therefore arc-transitive, with arc-type $2$.  
In particular, $1+1$ and $(1+1)$ cannot occur as arc-types. 
For $d = 3$ there are four possibilities (namely $3$, $2+1$, $1+1$ and $(1+1)$), 
and they all occur, as shown in \cite{Coxeter}. 
A natural question arises as to what arc-types occur for higher valencies. 

In this paper, we provide some basic theory for arc-types, which helps us to answer 
that question. 
In particular, we give for each positive integer $d$ the number of different 
partitions of the above form (\ref{arctype}) for $d$, by means of a generating function. 
This gives a closed form solution for the number of different possibilities 
in the case of a GRR of given valency $d$.  (As a curiosity, we mention there is 
also a connection with the different root types of polynomials with real coefficients.) 

Then as our main theorem, we show that with the exception of $1+1$ and $(1+1)$, 
every partition $\Pi$ as defined in (\ref{arctype}) is \emph{realisable}, 
in the sense that there exists at least graph with $\Pi$ as its arc-type.
To prove this, we consider how to combine `small' vertex-transitive graphs 
into a larger vertex-transitive graph, preserving (but increasing the number of) 
the summands in the arc-type.
The key step is to show that the arc-type of a Cartesian product 
of such graphs is just the sum of their arc-types, 
when the graphs are `relatively prime' with respect to the Cartesian product. 

Our proof of the theorem then reduces to finding suitable `building blocks', 
to use as base cases for the resulting construction.  
Several interesting families and examples of graphs are found to be helpful.  
In particular, we introduce the concept of a special kind of \emph{thickened cover} 
of a graph, obtained by replacing some edges of the given graph by 
complete bipartite graphs, and other edges by ladder graphs (with `parallel' edges).
Under some special conditions, the thickened cover is vertex-transitive, 
and it is easy to compute its arc-type from the arc-type of the given base graph.  

Finally, as a corollary of our main theorem, we show that every 
standard partition of a positive integer $d$ is realisable as the 
{\em edge-type\/} of a vertex-transitive graph  of valency $d$, 
except for $1+1$ (when $d = 2$). 

Vertex-transitve graphs are key players in algebraic graph theory, 
but also (as intimated earlier) they have important applications in 
other branches of mathematics.  In group theory they play a crucial role 
as Cayley graphs.  In geometry they are encountered in 
convex and abstract polytopes, incidence geometries, and configurations, 
and in manifold topology they feature in the study of regular and chiral maps 
and hypermaps, and Riemann and Klein surfaces with large automorphism groups.  

Classification of vertex-transitive graphs by their edge- or arc-type gives
a new viewpoint, and helps provide a better understanding of their structure. 
This approach can also be fruitful in terms of determining all small examples 
of various kinds of graphs, akin to the census of $3$-valent symmetric graphs 
on up to 10000 vertices \cite{Foster,ConderFosterCensus,Conder10000}, 
the census of vertex-transitive graphs up to 31 vertices \cite{Royle}, 
or the census of small $4$-valent half-arc-transitive graphs 
and arc-transitive digraphs of valency $2$ \cite{PSVhat}. 
For example, the construction used by Poto\v{c}nik, Spiga and Verret to 
obtain their census of vertex-transitive $3$-valent graphs on up to 1280 vertices in \cite{PSVvt}
depends on the edge-type, 
and their census of all connected quartic arc-transitive graphs of order up to $640$ 
(also in \cite{PSVvt}) was obtained by associating some of them 
with vertex-transitive $3$-valent graphs of edge-type $2+1$ (and using cycle decompositions); 
see also \cite{PSV4valent,PW}.
In these cases it was a stratified approach that enabled the limits of the census 
to be pushed so high, and it is likely that for graphs of higher valency or larger order, 
this kind of approach will be invaluable.   


\section{Preliminaries}
\label{sec:prelims}

All the graphs we consider in this paper are finite, simple, undirected and non-trivial 
(in the sense of containing at least one edge).  
Given a graph $X$, we denote by $V(X)$ and $E(X)$ the set of vertices
and the set of edges of $X$, respectively. We denote an edge of $X$ with vertices 
$u$ and $v$ by $\{u,v\}$, or sometimes more briefly by $uv$. 
For any vertex $v$ of $X$, we denote by $E(v)$ the set of edges of $X$ incident with $v$.

Associated with each edge $\{u,v\}$ there are two {\em ordered edges} 
(also called \emph{arcs\/}) of $X$, which we denote by $(u,v)$ and $(v,u)$,  
and we denote by $A(X)$ the set of arcs of $X$.  Also for a vertex $v$ of $X$, 
we define $A(v)=\{(v,u): (v,u) \in A(X) \}$ as the set of arcs of $X$ that emanate 
from (or start at)~$v$.

The automorphism group of $X$ is denoted by $\Aut(X)$.
Note that the action of  $\Aut(X)$ on the vertex-set $V(X)$ also induces an action
of $\Aut(X)$ on the edge-set $E(X)$ and one on the arc-set $A(X)$.
If the action of $\Aut(X)$ is transitive on the vertex-set, edge-set, or arc-set, 
then we say that $X$ is \emph{vertex-transitive}, \emph{edge-transitive} or \emph{arc-transitive}, 
respectively.  
An arc-transitive graph is often also called  \emph{symmetric}.
The graph $X$ is \emph{half-arc-transitive} if it is vertex-transitive and edge-transitive, 
but not arc-transitive.
Note that the valency of a half-arc-transitive graph is necessarily even;
see \cite[p. 59]{Tutte}.

In this paper we only consider vertex-transitive graphs. 
Obviously, vertex-transitive graphs are always regular. 
Moreover, because a disconnected vertex-transitive graph consists of pairwise isomorphic 
connected components, we may restrict our attention here 
to connected graphs.  Also we will sometimes use `VT' as an abbreviation for vertex-transitive. 

Next, let $G$ be a group, and let $S$ be  a subset of $G$ that is inverse-closed 
and does not contain the identity element. Then the  \emph{Cayley graph} $\Cay(G, S)$
is the graph with vertex-set $G$, and with vertices $u$ and $v$ 
being adjacent if and only if $vu^{-1} \in S$ 
(or equivalently, $v = xu$ for some $x \in S$). 
Since we require $S$ to be inverse-closed, this Cayley graph is undirected, 
and since $S$ does not contain the identity, the graph has no loops. 
Also $\Cay(G, S)$ is regular, with valency $|S|$, 
and is connected if and only if $S$ generates $G$.
Furthermore, it is easy to see that $G$ acts as a group of automorphisms 
of $\Cay(G, S)$ by right multiplication, and this action is vertex-transitive, with trivial stabiliser, 
and so this action of $G$ on $\Cay(G, S)$ is sharply-transitive (or {\em regular\/}). 
Hence in particular, $\Cay(G, S)$ is vertex-transitive.  

Indeed the following (which is attributed to Sabidussi  \cite{Sabidussi})  
shows how to recognise Cayley graphs: 

\begin{Lemma}                             \label{lemma:sabidussi}
A  graph $X$ is a Cayley graph for the group $G$ if and only if  
$X$ is vertex-transitive and $G$ acts regularly on $V(X)$.
More generally, a graph $X$ is a Cayley graph if and only if some subgroup of $\Aut(X)$
acts regularly on $V(X)$.
\end{Lemma}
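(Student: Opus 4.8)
The plan is to prove the two implications of the first (named-group) statement directly, and then deduce the ``more generally'' statement as an immediate consequence.

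First, suppose $X = \Cay(G,S)$. As already recorded in the text, $G$ acts on $V(X) = G$ by right multiplication, this action is by graph automorphisms, and it is sharply transitive; hence $X$ is vertex-transitive and (the image of) $G$ acts regularly on $V(X)$. Conversely, suppose $X$ is a graph on which a subgroup $G \le \Aut(X)$ acts regularly; I would fix a base vertex $v_0$ and use the regularity of the action to identify $V(X)$ with $G$ via the orbit bijection $g \mapsto v_0^{\,g}$, chosen so that $v_0$ corresponds to the identity and the action of $G$ becomes right multiplication on $G$. Define $S = \{\, g \in G : v_0 \text{ is adjacent to } v_0^{\,g} \,\}$, the set of group elements carrying $v_0$ to one of its neighbours. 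Then I would verify three things: (i)~$e \notin S$, since $X$ is simple and so has no loop at $v_0$; (ii)~$S$ is inverse-closed, because if $v_0 \sim v_0^{\,g}$ then applying the automorphism $g^{-1}$ yields $v_0^{\,g^{-1}} \sim v_0$, so $g^{-1} \in S$; and (iii)~for all $g,h \in G$ one has $v_0^{\,g} \sim v_0^{\,h}$ in $X$ if and only if $hg^{-1} \in S$, which follows by applying the automorphism $g^{-1}$ to the pair $(v_0^{\,g}, v_0^{\,h})$ and invoking the definition of $S$. Conditions (i)--(iii) say exactly that the chosen identification $V(X) \cong G$ is a graph isomorphism $X \cong \Cay(G,S)$, establishing the converse.

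For the ``more generally'' statement: a graph $X$ is a Cayley graph precisely when it is a Cayley graph for some particular group $G$, which by the first part happens precisely when $X$ is vertex-transitive and some subgroup $G \le \Aut(X)$ acts regularly on $V(X)$; and since the existence of a regularly-acting subgroup already forces vertex-transitivity, the hypothesis of vertex-transitivity is redundant in this reformulation. So the two statements are equivalent.

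This lemma is classical, so I do not expect a genuine obstacle; the only point that requires care is the left-versus-right bookkeeping in step~(iii). One must fix whether $\Aut(X)$ is regarded as acting on the left or the right and match that choice to the right-multiplication convention used to define $\Cay(G,S)$ in the text, so that ``$v_0^{\,g}\sim v_0^{\,h}$'' translates to ``$hg^{-1}\in S$'' rather than ``$g^{-1}h \in S$''. Inverse-closedness of $S$ makes the two conventions yield isomorphic graphs in any case, but it is cleanest to adopt the right-action convention from the outset so that the computation in (iii) lines up with the definition of $\Cay(G,S)$ verbatim. Everything else is routine verification.
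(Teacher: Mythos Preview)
Your proof is correct and is the standard argument for Sabidussi's theorem. Note, however, that the paper does not actually prove this lemma: it is simply stated and attributed to Sabidussi \cite{Sabidussi} as a classical result, so there is no ``paper's own proof'' to compare against. Your write-up would serve perfectly well as a self-contained proof if one were desired.
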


Observe that for a fixed $x \in S$, all the  edges of form $\{u,xu\}$ and 
$\{u,x^{-1}u\}$ for $u \in G$ lie in the same edge-orbit (as each other) under the automorphism group 
of $\Cay(G, S)$, and similarly, that all the arcs of the form $(u,xu)$  lie in the same arc-orbit.
If all such arc-orbits are distinct, then $G$ is the full automorphism group of $\Cay(G, S)$, 
and $\Cay(G, S)$ is called a \emph{graphical regular representation} of the group $G$, 
or briefly a $GRR$ for $G$.  

Another term for such a graph is \emph{zero-symmetric}.  
Note that if $X$ is a connected zero-symmetric graph (or GRR) with valency $d$, 
then $X$ has $d$ arc-orbits, 
and all the arcs emanating from a given vertex $v$ lie in different arc-orbits. 
Moreover, if $G = \Aut(X)$, 
then the stabiliser $G_v$ of any vertex $v$ must fix each neighbour of $v$,  
and then by connectedness, it follows that $G_v$ is trivial. 
Thus $G$ acts regularly on $V(X)$, and so by Lemma \ref{lemma:sabidussi}, 
the graph $X$ is a Cayley graph for $G$. 

Next, we describe another group-theoretic construction, for a special class 
of vertex-transitive graphs.
Let $G$ be a group, let $H$ be a subgroup of $G$, and let $a$ be an element of $G$
such that $a^2 \in H$.  Now define a graph $\Gamma = \Gamma(G,H,a)$ by setting 
$$
V(\Gamma) = \{Hg : \, g \in G\} 
\quad \hbox{and} \quad 
E(\Gamma) = \{\{Hx,Hy\} : x,y \in G \mid xy^{-1} \in HaH\}.
$$
This graph $\Gamma$ is called a (\emph{Sabidussi}) \emph{double coset graph}.
As with Cayley graphs, the given group $G$ induces a group of automorphisms 
of $\Gamma(G,H,a)$ by right
multiplication, since $(xg)(yg)^{-1} = xgg^{-1}y^{-1} = xy^{-1} \in HaH$
whenever $\{Hx,Hy\} \in E(\Gamma)$.
Again this action is vertex-transitive, since $(Hx)x^{-1}y = Hy$.
Moreover, the stabiliser of the vertex $H$ is $G_H = \{g \in G | \, Hg =H\}$, 
which is $H$ itself, 
and this acts transitively on the neighbourhood $\{ Hah : h \in H\}$ of $H$, 
so in fact $\Gamma(G,H, a)$ is arc-transitive.
Conversely, every non-trivial arc-transitive graph $X$ can be constructed in this way, 
by taking $G = \Aut(X)$, and $H = G_v$ for some $v \in V(X)$, and $a$ as any 
automorphism in $G$ that interchanges $v$ with one of its neighbours. 

Finally, we mention a convenient way to describe cubic Hamiltonian graphs, 
that will be useful later.  Let $X$ be a cubic Hamiltonian graph on $n$ vertices.
Label the vertices of $X$ with numbers $0,\dots, n-1$, such that vertices $i$ and $i+1$ 
are consecutive in a given Hamilton cycle for $0 \le i < n$ (treated modulo $n$). 
Each vertex $i$ is adjacent to $i-1$ and $i+1$ (mod $n$), and to one other vertex, 
which has label $v_i$, say.  Now define $d_i = v_i-i$ for $0 \le i < n$.
Then the \emph{LCF-code} of  $X$ is given by the sequence 
$[d_0,\dots,d_{n-1}]$. 
Clearly the LCF-code defines the graph $X$, since the edges are $\{i,i+1\}$ for all $i$ (mod $n$) 
and $\{i,i+d_i\}$ for all $i$ (mod $n$).  On the other hand, The LCF is not necessarily unique for $X$ 
(since it depends on the choice of Hamilton cycle).  
Note also that if the code sequence is periodic, then it is sufficient to list a sub-sequence 
and indicate how many times it repeats, using a superscript. 
For example, $[3,-3]^4$ is the LCF-code of a 3-dimensional cube. 

\section{Edge-types and integer partitions}
\label{sec:edge-type} 

Let $X$ be a vertex-transitive graph of valency $d$, 
and let $\{\Delta_1,\dots,\Delta_k\}$ be the set of orbits of  $G = \Aut(X)$ on $E(X)$.
This partition of $E(X)$ into orbits also induces a partition of the set $E(v)$ of all 
edges incident with a given vertex $v$, namely into the sets $E(v) \cap \Delta_i$ for $1\le i \le k$. 
These are simply the restrictions of the edge-orbits $\Delta_i$ to the set $E(v)$. 

If we let $\ell_i = |E(v) \cap \Delta_i|$ for each $i$, then we may define the \emph{edge-type} 
of  $X$ to be the partition of the valency $d$ as the sum 
$$
\ell_1 + \ell_2 +\dots + \ell_k.
$$
where we assume that the numbers $\ell_i$ are in descending order. 
Note that by vertex-transitivity, the numbers $\ell_i$ do not depend 
on the choice of $v$. 
(Indeed when $X$ is finite, counting incident vertex-edge edge pairs $(v,e)$ with $e \in \Delta_i$ 
gives $2|\Delta_i| = |V(X)|\ell_i$ and therefore $\ell_i = 2|\Delta_i|/|V(X)|$ for each $i$.)  
Hence in particular, the edge-type does not depend on the choice of $v$. 
We denote the edge-type of $X$ by ${\rm et}(X)$. 

It is not at all obvious as to which partitions can occur as the edge-type of a vertex-transitive graph. 
The edge-type of a vertex-transitive cubic graph can be $3$, or $2+1$, 
or $1+1+1$, while that of a vertex-transitive quartic graph can be $4$, or $3+1$, or $2 + 2$, 
or $2 + 1 + 1$, or $1 + 1 + 1 + 1$.   
We will see instances of all of these in Section \ref{sec:small_examples}. 
Then later, in Section \ref{sec:realise},
we will show that with the exception of $1+1$ (for $d = 2$), 
every standard partition of a given positive integer $d$ can be realised as an edge-type. 

To enumerate the possibilities for a given valency $d$, we may
use generating functions for integer partitions. 
Let $p(d,k)$ denote the number of partitions of $d$ with $k$ parts, 
and let $p(d) $ denote the number of partitions of an integer $d$.
Obviously, $p(d) = \sum_k p(d,k)$.

The generating functions for integer partitions are very well-known, and can be found in 
\cite[p. 100]{Wilf} for example.  
In fact, the generating function $P(x,y)$ for $p(d,k)$ is given by 
$$
P(x,y) = \sum_d \sum_{k \,\ge \, 0} p(d,k)  y^kx^d
            = \prod_{n \,\ge\, 1} \frac{1}{1 - yx^n} \, ,
$$
and then by taking $y=1$, we get
$$
P(x)  = \prod_{n \,\ge\, 1} \frac{1}{1 - x^n}  = \sum_{d \,\ge\, 0} p(d) x^d 
$$
as the generating function for $p(d)$ itself.

\section{Arc-types and marked partitions}
\label{sec:arc-types}

In this section we refine the notion of edge-type of a vertex-transitive graph $X$, 
by considering the action of $\Aut(X)$ on the arcs of $X$. 

Let $\Delta$ be an orbit of  $\Aut(X)$ on $A(X)$, 
and let $\Delta^* = \{ (v,u) : (u,v) \in \Delta \}$ be its {\em paired} orbit, in the same 
way that a permutation group on a set $\Omega$ has paired orbitals on 
the Cartesian product $\Omega \times \Omega$. 
Note that $\Delta^*$ is also an orbit of $\Aut(X)$ on $A(X)$, 
and that the union $\Delta \cup \Delta^*$ consists of all the arcs obtainable 
from an orbit of $\Aut(X)$ on edges of $X$. 
We say that $\Delta$ is \emph{self-paired} if $\Delta = \Delta^*$ 
and \emph{non-self-paired} if $\Delta \not = \Delta^*$.

We can now write the orbits of $\Aut(X)$ on $A(X)$ as 
$$\Delta_1, \ldots, \Delta_{t}, \ \Delta_{t+1},\Delta_{t+1}^{*}, \dots, \Delta_{t+s},\Delta_{t+s}^{*},$$
where $\Delta_1, \ldots, \Delta_t$ are self-paired, 
while $\Delta_{t+1}, \ldots, \Delta_{t+s}$ are non-self-paired. 

This partition of $A(X)$ into these orbits of $\Aut(G)$ also induces a partition of the set of arcs 
emanating from a given vertex. For any vertex $v$ of $X$, define 
$n_i = |A(v) \cap \Delta_i|$ for $1 \le i \le t$, and $m_j = |A(v) \cap \Delta_j|$ for $t+1 \le j \le t+s$. 
Again since $X$ is vertex-transitive, these numbers do not depend on the choice of $v$.
In particular, it follows that $|A(v) \cap \Delta_j^*| = |A(v) \cap \Delta_j| = m_j$ for $t+1 \le j \le t+s$. 

Hence the $n_i$ are the sizes of the self-paired arc-orbits restricted to $A(v)$, 
while the $m_j$ are the sizes of the non-self-paired arc-orbits restricted to $A(v)$, 
and thus we obtain 
$$
 d \, = \, |A(v)| \, = \, n_1 +\dots + n_t + (m_1 + m_1) + \dots + (m_s + m_s), 
$$
just as in (\ref{arctype}) in the Introduction. 
This is the {\em arc-type\/} of $X$, and we denote it by ${\rm at}(X)$. 

We may call the expression on the right-hand-side of the above a {\em marked partition\/} 
of the integer $d$.   
By this, we mean simply a partition in which some pairs of equal-valued 
summands are placed in parentheses, or bracketed. 
Note that the parentheses in a marked partition $\Pi$ are important, 
because when $\Pi$ represents the arc-type of a VT graph, 
they indicate that the two numbers summed between the parentheses are the 
sizes of two paired arc-orbits corresponding to the same edge-orbit.

Since the order of the arc-orbits of each of the two kinds (self-paired and 
non-self-paired) can be chosen arbitrarily, we may consider two marked partitions 
to be equal if they have the same summands, possibly in a different order.  
Usually we will assume that the summands of each kind (unbracketed and bracketed) 
are in descending order, so that $n_1 \ge \dots \ge n_t$ and $m_1 \ge \dots \ge m_s$. 

\smallskip
In a sense, the three most important classes of vertex-transitive graphs 
are the arc-transitive, half-arc-transitive and zero-symmetric graphs, 
and their arc-types are as follows: 
\\[-18pt] 
\begin{itemize}
\item Arc-transitive graphs of valency $d$ have arc-type $d$; \\[-20pt] 
\item Half-arc-transitive graphs of even valency $d$ have arc-type  $(d/2 + d/2)$; \\[-20pt] 
\item Zero-symmetric graphs have arc-type
           $1 + \ldots + 1 + (1+1) + \ldots + (1+1)$. 
\end{itemize}
In particular, it follows that there are $\lfloor (d+1)/2 \rfloor$ possibilities for the 
arc-type of a $d$-valent zero-symmetric graph (or GRR). 

\smallskip
At this point, we note that the arc-type of a VT graph $X$ 
with automorphism group $G$ can be determined by considering solely the 
action of the vertex-stabiliser $G_v$ on the neighbourhood $X(v)$ of an given vertex $v$. 
This observation follows from standard theory of transitive permutation groups. 
If $\Delta$ is an arc-orbit of $G$, then the 
set $\Delta(v) = \{w : w \in V(X) \mid (v,w) \in \Delta\}$ is an orbit of $G_v$ on $X(v)$, 
and conversely, if $\Lambda$ is an orbit of $G_v$ on $X(v)$, 
then $\{(v,w)^g : u \in \Lambda, g\in G\}$ is a orbit of $G$ on $A(X)$. 
Also the arc-orbit $\Delta$ is self-paired if and only if $\Lambda$ is a
self-paired orbit of $G_v$ on $X(v)$, since both mean that the 
arc $(w,v)$ lies in the same orbit as $(v,w)$, for some $w \in \Lambda$. 
Hence in particular, the summands in the arc-type of $X$ are just the sizes 
of the orbits of $G_v$ on the neighbourhood of a vertex $v$ of $X$.

\smallskip
Next, by finding the generating function for the set of marked partitions, we can
also count the (maximum) number of possible arc-types for each valency $d$.

Let $t^\prime(d,k)$ denote the number of marked partitions of $d$ with $k$ parts,  
and let $t(d) $ denote the number of marked partitions of an integer $d$.
Obviously, $t(d) = \sum_k t^\prime(d,k)$.
We can obtain the generating function $T^\prime(x,y)$ for $t^\prime(d,k)$ 
by adapting the generating function for standard partitions, to take account of 
the bracketed pairs.  This can be found from \cite[p. 95]{Wilf}, for example, 
and is as follows:
$$
T^\prime(x,y)  = \sum_{d \ge 0} \sum_{k \ge 0} t^\prime(d,k)  x^dy^k
            = \prod_{n \ge 1} \frac{1}{(1 - yx^n) (1 - y^2 x^{2n})}.
$$
Then by taking $y=1$, we get
$$
T(x) =   T^\prime(x,1) = \prod_{n \ge 1} \frac{1}{(1 - x^n) (1 - x^{2n})} =  
               \sum_{d \ge 0} t(d)  x^d
$$
as the generating function for $t(d)$ itself.

Here we remark that  a different combinatorial approach can be taken 
for the generating function $T(x)$, namely through refining integer partitions by
labelling some even parts. 
(For example, the partition $6=2+2+1+1$ gives rise to three labelled partitions: 
$6=2+2+1+1$, $6=2+2^*+1+1$, and $6=2^*+2^*+1+1$.) 
Now let $t^*(d,k)$ denote the number of labelled partitions of an integer $d$ 
having $k$ parts. Then the generating function for $t^*(d,k)$ is 
$$
T^*(x,y)  = \sum_{d \ge 0} \sum_{k \ge 0} t^*(d,k)  x^dy^k
            = \prod_{n \ge 1} \frac{1}{(1 - yx^n) (1 - y x^{2n})},
$$
and we find that $T^*(x,1)=T^\prime(x,1)=T(x)$.\\

Also we note that the generating function $T(x)$ defines the  sequence 
$$1,1,3,4,9,12,23,31,54,73,118, \ldots ,$$ which is denoted by A002513 in The On-Line
Encyclopedia of Integer Sequences \cite{Sloane}.

Finally, it should come as no surprise that marked partitions of a positive integer $d$ 
can be used also to count different types of solutions of a real polynomial equation 
of degree $d$, when attention is paid whether the roots are real and unequal, 
real and equal (in various combinations)  or simple or multiple complex conjugate;  
see \cite{Capobianco}.

\section{Edge-types and arc-types  for small valency}
\label{sec:small_examples}

In this section we give examples of vertex-transitive graphs with  
every possible edge-type and arc-type, for valencies up to 4, 
and summarise the information in Table \ref{table:at} at the end. 
Note that it is enough to find examples of VT graphs for each arc-type, since the
same graphs will give also all the possible edge-types.
We do not give a proof that the arc-type is as claimed in each case, since that 
can be easily verified by computer (using for example Magma \cite{Magma}), 
or in some cases by hand.

Graphs with arc-type $1+1+1$ of order up to 120 are given in
\cite[Part III]{Coxeter}, and graphs with arc-type $(1+1)+1$ of order up to 120 
are given in \cite[Part II]{Coxeter}. 
The other zero-symmetric graphs listed here were found with the help 
of Magma \cite{Magma}, by checking the Cayley graphs for certain kinds of 
generating sets for small groups.  In some cases, we give the smallest 
possible example with the given arc-type. 
Some examples were found by also checking
tables of vertex-transitive graphs on up to 31 vertices; see  \cite{Royle}. 

\paragraph{Valency $d=1$ (one case):}

\begin{itemize}

\item[({\rm P1})]There is only one marked partition of $1$, namely $1$, and 
only one VT graph with this arc-type, namely  the complete graph $K_2$.
\end{itemize}

\paragraph{Valency $d=2$ (three cases):}
\begin{itemize}
\item[({\rm P2})] $2 = 2$: For every $n \ge 3$, the simple cycle $C_n$ has arc-type $2$.

\item[({\rm P3})] $2 = 1 + 1$: No VT graph has arc-type $1+1$, 
   because cycles are the only connected regular graphs with valency 2.

\item[({\rm P4})] $2 =  (1+1)$: No VT graph has arc-type $(1+1)$, 
    because cycles are the only connected regular graphs with valency 2.
    
\end{itemize}

\paragraph{Valency $d=3$ (four cases):}
%
\begin{itemize}

\item[({\rm P5})] $3 = 3$: The VT graphs with arc-type $3$ are precisely 
the arc-transitive cubic graphs, and there are infinitely many examples, 
the smallest of which is the complete graph $K_4$. 
Numerous other small examples, including the ubiquitous Petersen graph, 
are given in the Foster census \cite{Foster}, which was later expanded 
by Conder and Dobcs\'{a}nyi \cite{ConderFosterCensus}, 
and again further by Conder \cite{Conder2048} up to order $2048$.

\item[({\rm P6})] $3 = 2 + 1$: 
The smallest VT graph with arc-type $2+1$ is the triangular prism, on 6 vertices; 
see Figure \ref{fig:2_1}. It is easy to see that the edges on the two
triangles form one edge-orbit, while all the other edges form another orbit.
\begin{figure}[H]
\centering
\includegraphics[width= 3cm]{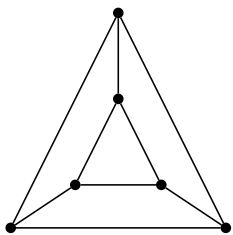} 
\caption{The triangular prism, which has arc-type $2+1$ }
\label{fig:2_1}
\end{figure}

\item[({\rm P7})] $3 = 1+1+1$:
The smallest VT graph with arc-type $1+1+1$ is the zero-symmetric
graph on 18 vertices from \cite[p. 4]{Coxeter}; see also Figure \ref{fig:1_1_1}.  
This is the Cayley graph of a group of order $18$ generated by three involutions,
and it can also be described as a cubic Hamiltonian graph with LCF-code $[5,-5]^9$.
\begin{figure}[H]
\centering
\includegraphics[width=3cm]{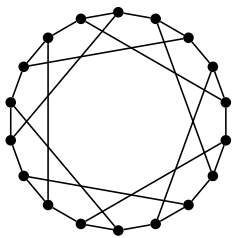}
\caption{The smallest VT graph with arc-type $1+1+1$, on $18$ vertices}
\label{fig:1_1_1}
\end{figure}  

\item[({\rm P8})] $3 =1+(1+1)$: 
The smallest VT graph with arc-type $1+(1+1)$ is the zero-symmetric
 graph on 20 vertices from \cite[p. 35]{Coxeter}; see  Figure \ref{fig:zerosym20a}.
This is the Cayley graph of a group of order $20$, generated by one involution and one non-involution, 
and it can also be described as a cubic Hamiltonian graph with LCF-code $[6,6,-6,-6]^5$. 
For more properties of this graph, see Lemma  \ref{lemma:typem(11)}.
\begin{figure}[H]
\centering
\includegraphics[width= 4cm]{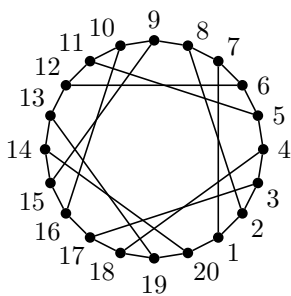}
\caption{The smallest VT graph with arc-type $1+(1+1)$, on $20$ vertices}
\label{fig:zerosym20a}
\end{figure}
\end{itemize}

\paragraph{Valency $d=4$ (nine cases):}

\begin{itemize}

\item[({\rm P9})] $4 = 4$:  
The VT graphs with arc-type $4$ are arc-transitive quartic graphs. 
The smallest example is the complete graph $K_5$.
                     
\item[({\rm P10})] $4 = (2+2)$: 
The VT graphs with arc-type $(2+2)$ are half-arc-transitive quartic graphs.
The smallest example is the Holt graph \cite{Holt}, of order 27 see Figure \ref{fig:22}.
\begin{figure}[H]
\centering
\includegraphics[width= 5cm]{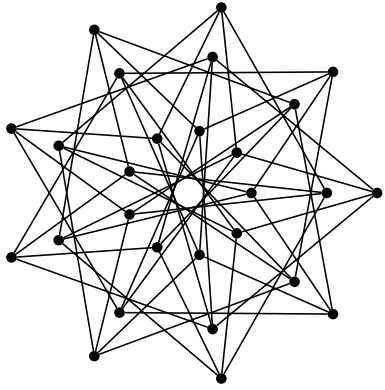}
\caption{The Holt graph (the smallest $4$-valent half-arc-transitive graph)}
\label{fig:22}
\end{figure}

\item[({\rm P11})] $4 = 3+1$: 
The smallest VT graph with arc-type $3+1$ is $K_4 \cp K_2$, 
which is the Cartesian product of $K_4$ and $K_2$.  
The two summands $K_4$ and $K_2$ have arc-types 
$3$ and $1$, respectively, and are `relatively prime'.  
The example will be generalised in Theorem  \ref{thm:Cartesian}.
                     
\item[({\rm P12})] $4 = 2 + 2$: 
The smallest VT graph with arc-type $2+2$ is the circulant graph
$\Cay(\bZ_7;\{1,2\})$ on 7 vertices, where $\bZ_7$ is viewed as an 
additive group.  This graph is shown in Figure \ref{fig:1_11}.
Each of the edges on the outer 7-cycle lies in two triangles while
each edge of the inner 7-cycle lies in only one triangle, and it follows that 
$\Cay(\bZ_7;\{1,2\})$ has two edge orbits.
\begin{figure}[H]
\centering
\includegraphics[width= 3cm]{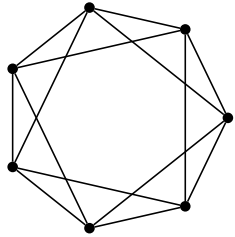}
\caption{The circulant $\Cay(\bZ_7;\{1,2\})$, which has arc-type $2+2$}
\label{fig:1_11}
\end{figure} 

\item[({\rm P13})] $4 = 2 + (1 + 1)$: 
The  graph on 40 vertices in Figure \ref{fig:211} 
is the smallest known VT graph with arc-type $2+(1+1)$. 
It is a thickened cover of the trivalent graph on 20 vertices
with arc-type $(1+1) +1$; see Section \ref{sec:thcov} for generalisations of this.
\begin{figure}[H]
\centering
\includegraphics[width = 4cm]{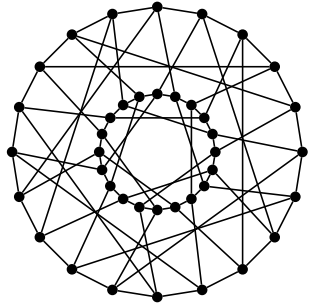}
\caption{A VT graph with arc-type $2 + (1 + 1)$, on $40$ vertices}
\label{fig:211}
\end{figure}

\item[({\rm P14})] $4 = (1+1)+(1 + 1)$:  
The  graph on 42 vertices in Figure \ref{fig:11_11} 
is the smallest VT graph with arc-type $ (1+1)+(1+1)$.
As a GRR, it is a Cayley graph of the group $C_7 \rtimes C_6$ with  generating set
that contains an element of order 6, an element of order 7, and their inverses.
For more details on this graph see Lemma  \ref{lemma:type(11)(11)}.
\begin{figure}[H]
\centering
\includegraphics[width = 5cm]{fig7_new.tikz}
\hspace*{1cm}
\includegraphics[width = 5cm]{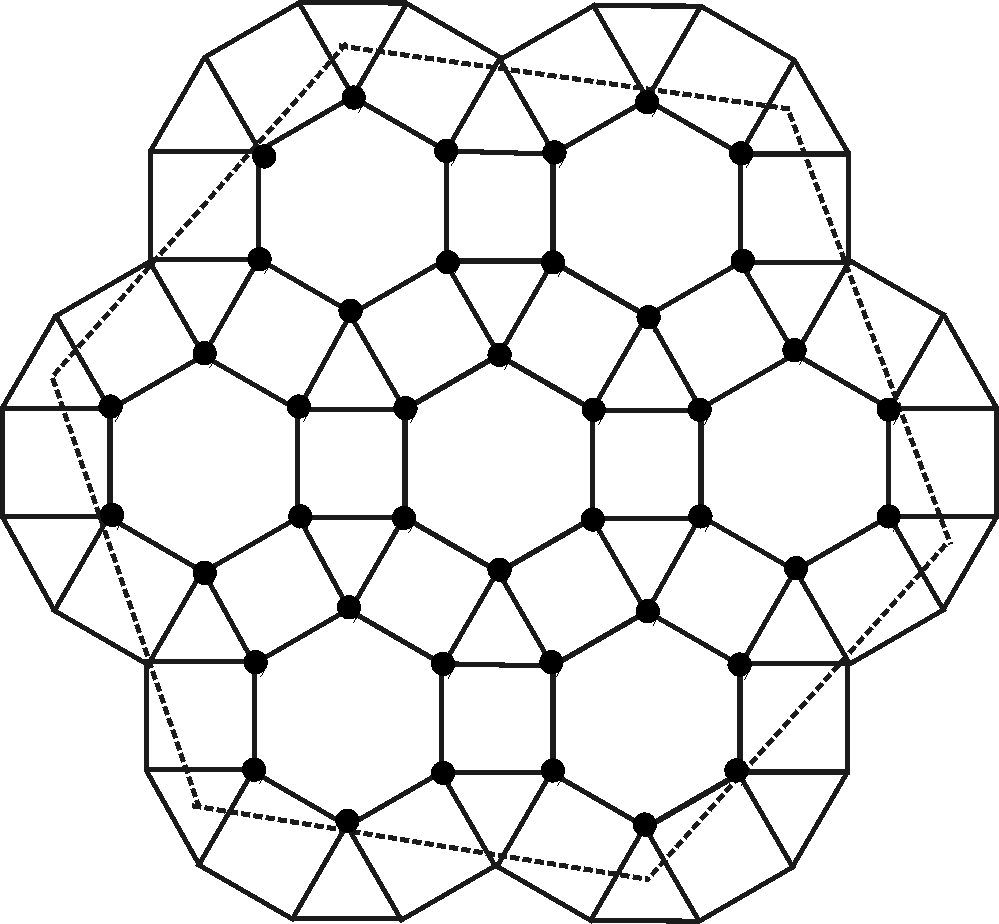}
\caption{On the left is the smallest VT graph with arc-type $(1+1)+(1+1)$, on $42$ vertices 
--- and on the right is an illustration of an embedding of this graph on the torus, 
using a hexagon with opposite sides identified}
\label{fig:11_11}
\end{figure}


\item[({\rm P15})] $4 = 2 + 1 + 1$: 
The  graph on 12 vertices in Figure  \ref{fig:2_1_1} is the smallest VT graph with arc-type $2+1+1$. 
It can be obtained from the hexagonal prism by adding diagonals to three non-adjacent quadrangles.
\begin{figure}[H]
\centering
\includegraphics[width= 3cm]{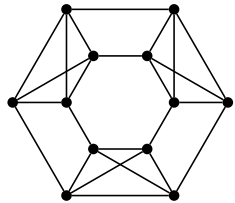}
\caption{The smallest VT graph with arc-type $2 + 1 + 1$, on $12$ vertices}
\label{fig:2_1_1}
\end{figure}

\item[({\rm P16})] $4 = 1 + 1 + (1+1)$: 
The graph  on 20 vertices in Figure \ref{fig:11_1_1} is the smallest VT graph 
with arc-type $1 + 1 + (1+1)$. 
If $G$ is the group Frobenius group $C_5 \rtimes C_4$ of order $20$, 
generated by the permutations $a = (1,2,3,4,5)$ and $b = (2,3,5,4)$, 
which satisfy the relations $a^5 = b^4 = 1$ and $b^{-1}ab = a^2$, 
then this graph is the Cayley graph (in fact a GRR) for $G$ given by the generating 
set $S = \{ab^2, a^2b^2, b, b^{-1}\}$. 
\begin{figure}[H]
\centering
\includegraphics[width = 4cm]{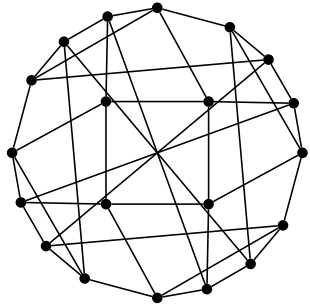}
\caption{The smallest VT graph with arc-type $1 + 1 + (1+1)$, on $20$ vertices}
\label{fig:11_1_1}
\end{figure}

\item[({\rm P17})] $4 = 1+1+1+ 1$: 
The graph  on 16 vertices in Figure \ref{fig:1111} is the smallest VT graph 
with arc-type $1+1+1+1$. 
It is the Cayley graph (in fact a GRR) for the dihedral group 
$D_8 = \langle\, x,y \ | \ x^2 = y^8 = (xy)^2 = 1 \, \rangle$ of order $16$ 
with generating set $S = \{x,xy,xy^2,xy^4 \}$. 
\begin{figure}[H]
\centering
\includegraphics[width = 4cm]{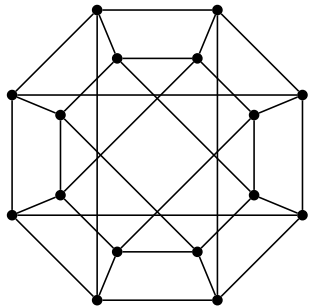}
\caption{The smallest VT graph with arc-type $1+1+1+1$, on $16$ vertices}
\label{fig:1111}
\end{figure}

\end{itemize}

The above examples are summarised in Table \ref{table:at}.

\begin{table}[H]
\begin{center}
\begin{tabular}{|l|l|l|l|l|}
\hline
Valency & Edge-type & Arc-type & Example & Case \\
\hline
$1$ & $1$ & $1$ & $K_2$ & P1 \\ \hline

$2$ & $2$ & $2$ & $C_3$ & P2 \\  \cline{3-5}
      & & $(1+1)$ & [Impossible] & P3 \\ \hline
      
$2$ & $1+1$ & $1+1$ & [Impossible] & P4 \\  \hline

$3$ & $3$ & $3$ & $K_4$ & P5 \\  \hline

$3$ & $2+1$ & $2+1$ & prisms & P6 \\  \cline{3-5}
      & & $(1+1)+1$ & $LCF[6,6,-6,-6]^5$ & P7 \\ \hline

$3$ & $1+1+1$ & $1+1+1$ & $LCF[5,-5]^9$ & P8 \\ \hline

$4$ & $4$ & $4$ & $K_5$ & P9 \\   \cline{3-5}
      & & $(2+2)$ & Holt graph & P10 \\ \hline

$4$ & $3+1$ & $3+1$ & $K_4 \cp K_2$ & P11 \\ \hline

$4$ & $2+2$ & $2+2$ & $\Cay(\bZ_7;\{1,2\})$ & P12 \\   \cline{3-5}
      & & $2+(1+1)$ & Figure \ref{fig:211} & P13 \\   \cline{3-5}
      & & $(1+1)+(1+1)$ & Figure \ref{fig:11_11}  & P14    \\ \hline

$4$ & $2+1+1$ & $2+1+1$ & Figure \ref{fig:2_1_1} & P15 \\   \cline{3-5}
      & & $(1+1)+1+1$ & Figure \ref{fig:11_1_1} & P16 \\ \hline
      
$4$ & $1+1+1+1$ & $1+1+1+1$ & Figure \ref{fig:1111} & P17 \\ 
\hline
\end{tabular}     
\caption{Edge-types and arc-types of VT graphs with valency up to 4}
\label{table:at}
\end{center}
\end{table}

\section{Arc-types of Cartesian products}
\label{sec:products}

Given a pair of graphs $X$ and $Y$ (which might or might not be distinct), 
the Cartesian product $X \cp Y$  is a graph 
with vertex set $V(X) \times V(Y)$, such that two vertices $(u,x)$ and $(v,y)$
are adjacent in $X \cp Y$ if and only if 
$u = v$ and $x$ is adjacent with $y$ in $Y,$ or
$x=y$ and $u$ is adjacent with $v$ in $X$. 

This definition can be extended to the Cartesian product $X_1 \cp \dots \cp  X_k$ 
of a larger number of graphs $X_1,\dots,X_k$.  
The terms $X_i$ are called the {\em factors\/} of the Cartesian product $X_1 \cp \dots \cp  X_k$. 
The Cartesian product operation $\cp$ is associative and commutative. 
A good reference for studying this and other products is the book by 
Imrich and Klav\v{z}ar \cite{ImrichKlavzar}.

There are many properties of Cartesian product graphs that can
be easily derived from the properties of their factors. 
For example, we have the following: 

\begin{Proposition}                                   \label{prop:connected}
A Cartesian product graph is connected if and only if all of its factors are connected.
\end{Proposition}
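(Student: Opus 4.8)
The plan is to reduce to the two-factor case by associativity and induction, and then prove the two-factor statement directly by exhibiting explicit walks. First I would note that since $\cp$ is associative, $X_1 \cp \dots \cp X_k = (X_1 \cp \dots \cp X_{k-1}) \cp X_k$, so by induction on the number $k$ of factors it suffices to prove that for any two graphs $X$ and $Y$, the product $X \cp Y$ is connected if and only if both $X$ and $Y$ are connected. The inductive step is then immediate: $X_1 \cp \dots \cp X_k$ is connected if and only if $X_1 \cp \dots \cp X_{k-1}$ and $X_k$ are both connected, which by the inductive hypothesis happens exactly when all of $X_1, \dots, X_k$ are connected.

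For the `if' direction, suppose $X$ and $Y$ are both connected, and let $(u,x)$ and $(v,y)$ be arbitrary vertices of $X \cp Y$. I would first move in the first coordinate: a path $u = u_0, u_1, \dots, u_r = v$ in $X$ lifts to a walk $(u_0,x), (u_1,x), \dots, (u_r,x)$ in $X \cp Y$, since consecutive pairs agree in the second coordinate and are joined by an edge of $X$ in the first. Then from $(v,x)$ I would move in the second coordinate: a path $x = x_0, x_1, \dots, x_s = y$ in $Y$ lifts to a walk $(v,x_0), \dots, (v,x_s)$. Concatenating these two walks gives a walk from $(u,x)$ to $(v,y)$, so $X \cp Y$ is connected.

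For the `only if' direction, I would use the coordinate projection $\pi_X \colon (w,z) \mapsto w$. If $(u,x)$ and $(v,y)$ are adjacent in $X \cp Y$, then by definition either $u = v$ or $u$ is adjacent to $v$ in $X$; in either case $\pi_X$ sends the two endpoints to vertices of $X$ that are equal or adjacent. Hence any walk in $X \cp Y$ maps, after deleting consecutive repetitions, to a walk in $X$ joining the projected endpoints. Since $\pi_X$ is surjective and $X \cp Y$ is connected, $X$ is connected; applying the same argument to the projection onto the second coordinate shows $Y$ is connected.

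I do not expect a serious obstacle here: the only points needing mild care are the bookkeeping in the associativity/induction step, and the observation that the projection of a walk in $X \cp Y$ may contain consecutive repeated vertices (arising from edges of $X \cp Y$ that alter only the other coordinate), which are simply collapsed to recover an honest walk in the factor.
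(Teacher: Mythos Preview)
Your proof is correct and is the standard argument. The paper does not actually prove this proposition: it is stated without proof as a well-known fact about Cartesian products, with an implicit reference to the book of Imrich and Klav\v{z}ar cited in that section. So there is nothing to compare against; your write-up would serve perfectly well as a self-contained justification.
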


\begin{Proposition}                                   \label{prop:valency}
Let $X_1, \dots, X_k$ be regular graphs with valencies $d_1,\dots,d_k$.
Then their Cartesian product $X_1 \cp \dots \cp  X_k$ is also regular, with valency $d_1+\dots+d_k$.
\end{Proposition}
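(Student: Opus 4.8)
The plan is to reduce to the case of two factors and then induct on $k$, using the associativity of the Cartesian product noted above. The base case $k = 1$ is trivial, so the substance lies in $k = 2$: write $X = X_1$ and $Y = X_2$, with valencies $d_1$ and $d_2$ respectively.

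First I would fix an arbitrary vertex $(u,x)$ of $X \cp Y$ and enumerate its neighbours directly from the definition. A vertex $(v,y)$ is adjacent to $(u,x)$ in exactly one of two ways: either $v = u$ and $y$ is a neighbour of $x$ in $Y$, or $y = x$ and $v$ is a neighbour of $u$ in $X$. These two cases are mutually exclusive, since the factors are simple graphs and hence have no loops, so we cannot simultaneously have $v = u$ and $v$ adjacent to $u$ in $X$. The first case yields exactly $d_2$ neighbours and the second exactly $d_1$ neighbours; as the two sets are disjoint, the degree of $(u,x)$ in $X \cp Y$ is $d_1 + d_2$. Since $(u,x)$ was arbitrary, $X \cp Y$ is regular of valency $d_1 + d_2$.

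For the inductive step I would assume the result for products of fewer than $k$ factors and use associativity to write $X_1 \cp \dots \cp X_k = (X_1 \cp \dots \cp X_{k-1}) \cp X_k$. By the inductive hypothesis the first factor is regular of valency $d_1 + \dots + d_{k-1}$, and then the two-factor case gives that the whole product is regular of valency $(d_1 + \dots + d_{k-1}) + d_k = d_1 + \dots + d_k$, as claimed.

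I do not expect any genuine obstacle here: the argument is entirely elementary. The only point that warrants explicit mention is the disjointness of the two families of neighbours of a vertex in a two-factor product — which rests on the standing convention that all graphs considered are simple (and in particular loopless) — so that the two valencies genuinely add rather than overlap.
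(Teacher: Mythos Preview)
Your argument is correct and is exactly the elementary reasoning one would expect. The paper itself does not supply a proof of this proposition at all: it is stated as one of the easy facts about Cartesian products that ``can be easily derived from the properties of their factors,'' with reference to \cite{ImrichKlavzar}. So there is nothing to compare --- your proof simply fills in what the authors left as obvious.
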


A graph $X$ is called \emph{prime} (with respect to the Cartesian product)
if it is not isomorphic to a Cartesian product of a pair of smaller, non-trivial graphs. 
It is well-known that every connected graph 
can be decomposed to a Cartesian product of prime graphs, which is unique 
up to reordering and isomorphism of the factors; for a proof, see \cite[Theorem 4.9]{ImrichKlavzar}.
Similarly, two graphs are said to be \emph{relatively prime} (with respect to the Cartesian product)
if there is no non-trivial graph that is a factor of both. 
Note that two prime graphs are relatively prime unless they are isomorphic. 

We are interested in the question how the symmetries of individual graphs are 
involved in the symmetries of their product. Let  $X=X_1 \cp \dots \cp X_k$, 
and let $\alpha$ be an automorphism of one of the $X_i$.
Then $\alpha$ induces an automorphism $\beta$ of $X$, given by 
$$
\beta: (v_1,\dots,v_k) \mapsto (v_1, \dots, v_{i-1},v_i^{\,\alpha},v_{i+1},\dots,v_k). 
$$
The set of all automorphisms of $X$ induced in this way forms a subgroup 
of $\Aut(X)$, and if some of the factors of $X$ are isomorphic, then $\Aut(X)$ 
contains also other automorphisms that permute these factors among themselves, 
but if the factors of $X$ are relatively prime, then there are no other automorphisms.
Indeed we have the following:

\begin{Theorem} {\rm (\cite[Corollary 4.17]{ImrichKlavzar}) }    
\label{thm:autoproduct}
Let $X$ be the Cartesian product $X = X_1 \cp \dots \cp X_k$ of
connected relatively prime graphs $X_1, \dots, X_k$ .  Then every automorphism $\varphi$ of $X$ 
has the property that 
$$
\varphi: (v_1,\dots,v_k) \mapsto (v_1^{\,\varphi_1},\dots,v_k^{\,\varphi_k}) 
\quad \hbox{for all } (v_1,\dots,v_k) \in V(X),
$$
where $\varphi_i$ is an automorphism of $X_i$ for $1 \le i \le k$.
\end{Theorem}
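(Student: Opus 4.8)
This is Corollary 4.17 of \cite{ImrichKlavzar}, so I only outline how one proves it. The plan is to reduce to the case of \emph{prime} factors and then invoke the uniqueness of the prime factorisation of a connected graph with respect to $\cp$. First I would write each $X_i$ as a Cartesian product of prime graphs, $X_i = Y_{i,1} \cp \dots \cp Y_{i,r_i}$; because $X_1,\dots,X_k$ are relatively prime, no non-trivial prime graph is a factor of two distinct $X_i$, so the $Y_{i,j}$ fall into $k$ blocks whose isomorphism types are pairwise disjoint across blocks, and the full list $(Y_{i,j})$ is --- up to reordering and isomorphism of factors --- \emph{the} prime factorisation of $X$. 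Thus it is enough to prove the coordinatewise description of $\varphi$ for the finer decomposition of $X$ into its primes, and then regroup the coordinates blockwise.

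The heart of the argument is the claim that any automorphism $\varphi$ of a Cartesian product $Y_1 \cp \dots \cp Y_m$ of connected prime graphs permutes the \emph{layers} (the canonical copies of the $Y_\ell$ inside the product). For this one uses that $E(X)$ partitions into $m$ classes according to the factor in whose direction an edge runs, with the $Y_\ell$-layer through a vertex $a$ being the connected subgraph spanned by the class-$\ell$ edges reachable from $a$. The key lemma --- which is exactly the combinatorial substance of the unique prime factorisation theorem --- is that this partition of $E(X)$ into direction classes is an intrinsic invariant of the abstract graph $X$: any two prime decompositions of $X$ induce the same partition, up to the bijection relabelling the factors. I would prove this via the ``same square'' relation on edges (two edges are related if they are opposite sides of a chordless $4$-cycle) and its transitive closure, showing the resulting classes coincide with the direction classes; granting it, $\varphi$ permutes the direction classes and hence carries each $Y_\ell$-layer onto a $Y_{\sigma(\ell)}$-layer with $Y_{\sigma(\ell)} \cong Y_\ell$, for some fixed permutation $\sigma$ of $\{1,\dots,m\}$.

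The rest is bookkeeping. Fixing a base vertex and writing everything in coordinates, the fact that $\varphi$ maps the $Y_\ell$-layer through any vertex to the $Y_{\sigma(\ell)}$-layer through its image forces the $\sigma(\ell)$-th coordinate of $\varphi(v_1,\dots,v_m)$ to depend on $v_\ell$ alone, and yields an isomorphism $\varphi_\ell\colon Y_\ell \to Y_{\sigma(\ell)}$, so $\varphi$ is the map acting as $\varphi_\ell$ on coordinate $\ell$ followed by the coordinate permutation $\sigma$. Finally, since the primes of distinct blocks $X_i$ have disjoint isomorphism types, $\sigma$ preserves the block structure, so $\varphi$ maps the coordinates of block $i$ onto the coordinates of block $i$; collecting these gives automorphisms $\varphi_i \in \Aut(X_i)$ with $\varphi\colon (v_1,\dots,v_k) \mapsto (v_1^{\,\varphi_1},\dots,v_k^{\,\varphi_k})$, as claimed. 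The only real obstacle is the intrinsic-invariance lemma for the direction classes in the second paragraph; once that is in hand, the theorem follows formally.
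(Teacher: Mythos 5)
Your outline follows the same route as the source the paper itself relies on: the paper gives no proof of this statement, quoting it directly as Corollary 4.17 of Imrich--Klav\v{z}ar, and the book's argument is exactly the reduction you describe (prime refinement, intrinsic direction classes, automorphisms permute layers, relative primeness forces the permutation to preserve blocks). The block-regrouping bookkeeping in your first and third paragraphs is fine.

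There is, however, a concrete gap in the step you yourself single out as the heart of the matter. The relation you propose --- ``two edges are related if they are opposite sides of a chordless $4$-cycle'', followed by transitive closure --- does \emph{not} have the direction classes as its classes; it is in general strictly finer. Take $X = K_2 \cp P_3$ (the $2\times 3$ grid): the three rungs form one class, as they should, but the four rail edges split into two classes (the two rails of the left square, and the two rails of the right square), since two consecutive rails in the same layer are adjacent and are never opposite edges of any $4$-cycle. So your closure has three classes while there are only two direction classes, and automorphism-invariance of the direction partition does not follow from it. The fix is to enlarge the relation: one must also declare related any two \emph{incident} edges that do not lie together in a unique chordless $4$-cycle (this is the contrapositive of part (c) of Lemma \ref{lem:notCartesian1}: incident edges from different factors lie in exactly one chordless square, so incident edges failing this must lie in the same factor), alongside the opposite-edges clause, which is justified by part (b). With the corrected relation the closure is contained in the direction partition of every decomposition, and the remaining substance --- that its classes actually realise a product decomposition, hence coincide with the direction classes of the prime factorisation --- is the nontrivial core of the unique prime factorisation theorem, which you are in effect quoting rather than proving; that is acceptable here, since the paper itself defers to the book, but as written your sketch of the key lemma would fail.
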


\begin{Corollary}                                         
\label{cor:autoproduct}
If $X$ be the Cartesian product $X_1 \cp \dots \cp X_k$ of
connected relatively prime graphs $X_1, \dots, X_k$, then
$
\Aut(X) \cong \Aut(X_1) \times \dots \times \Aut(X_k).
$
\end{Corollary}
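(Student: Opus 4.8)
The plan is to derive this as a direct consequence of Theorem~\ref{thm:autoproduct}. That theorem already tells us that every automorphism $\varphi$ of $X = X_1 \cp \dots \cp X_k$ acts coordinatewise as $\varphi : (v_1,\dots,v_k) \mapsto (v_1^{\varphi_1},\dots,v_k^{\varphi_k})$ for some $\varphi_i \in \Aut(X_i)$. So the natural approach is to define a map $\Phi : \Aut(X) \to \Aut(X_1) \times \dots \times \Aut(X_k)$ by $\Phi(\varphi) = (\varphi_1,\dots,\varphi_k)$, and to check that it is a well-defined group isomorphism.

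First I would verify that $\Phi$ is well defined, i.e. that the tuple $(\varphi_1,\dots,\varphi_k)$ attached to $\varphi$ by Theorem~\ref{thm:autoproduct} is uniquely determined. This is where one has to be slightly careful: a priori the theorem only asserts existence of such $\varphi_i$. But since each $X_i$ is non-trivial (contains an edge) and hence has at least two vertices, for each fixed index $i$ and each vertex $v_i \in V(X_i)$ one can recover $v_i^{\varphi_i}$ as the $i$-th coordinate of $\varphi(v_1,\dots,v_k)$ for any choice of the other coordinates; since this value does not depend on that choice, $\varphi_i$ is determined as a function on $V(X_i)$, hence uniquely. (Alternatively one can simply fix a base vertex $v^{(0)} = (v_1^{(0)},\dots,v_k^{(0)})$ and define $\varphi_i$ by $v_i \mapsto$ the $i$-th coordinate of $\varphi(v_1^{(0)},\dots,v_{i-1}^{(0)},v_i,v_{i+1}^{(0)},\dots,v_k^{(0)})$, then note this agrees with the $\varphi_i$ of the theorem.) Next I would check that $\Phi$ is a homomorphism: composing two automorphisms $\varphi, \psi$ acts coordinatewise as $(\varphi\psi)_i = \varphi_i\psi_i$, which is immediate from the coordinatewise formula. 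Injectivity is clear, since $\varphi$ is completely determined by the tuple $(\varphi_1,\dots,\varphi_k)$ via the coordinatewise action. Finally, surjectivity: given any tuple $(\varphi_1,\dots,\varphi_k)$ with $\varphi_i \in \Aut(X_i)$, the coordinatewise map it defines is a bijection on $V(X)$ and preserves adjacency in $X \cp \dots \cp X_k$ — if $(u_1,\dots,u_k)$ and $(v_1,\dots,v_k)$ are adjacent then they differ in exactly one coordinate, say the $j$-th, with $u_j \sim v_j$ in $X_j$, and applying the tuple keeps all coordinates but the $j$-th equal while sending $u_j, v_j$ to adjacent vertices $u_j^{\varphi_j}, v_j^{\varphi_j}$ — so it lies in $\Aut(X)$ and maps onto the given tuple. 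This also uses the induced-automorphism construction already described in the paragraph preceding Theorem~\ref{thm:autoproduct}.

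I do not anticipate a serious obstacle here; the content of the corollary is essentially packaged into Theorem~\ref{thm:autoproduct}, and the remaining work is the routine verification that the coordinatewise decomposition is respected by composition and that every tuple of factor-automorphisms arises. The one point requiring a moment's attention is the well-definedness/uniqueness of $\varphi_i$, which rests on each factor being non-trivial (so that coordinates can be varied freely), and on the hypothesis that the $X_i$ are connected and relatively prime (needed to invoke Theorem~\ref{thm:autoproduct} in the first place). I would state the proof in two or three sentences, citing Theorem~\ref{thm:autoproduct} for the coordinatewise form and the preceding discussion for the fact that each coordinatewise tuple induces an automorphism of $X$, and conclude that $\Phi$ is the desired isomorphism.
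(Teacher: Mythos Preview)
Your proposal is correct and matches the paper's approach: the paper states this as an immediate corollary of Theorem~\ref{thm:autoproduct} without giving a separate proof, and your write-up simply spells out the routine verification (well-definedness, homomorphism, bijectivity of the map $\varphi \mapsto (\varphi_1,\dots,\varphi_k)$) that the paper leaves implicit.
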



\begin{Corollary}                                               \label{cor:producttransitive}
A Cartesian product of connected graphs is vertex-transitive 
if and only if every factor is vertex-transitive. 
\end{Corollary}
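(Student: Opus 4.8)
The plan is to prove both implications, with the \textbf{if} direction as the easy one and the \textbf{only if} direction handled via the unique prime factorisation of a connected graph with respect to $\cp$.

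For the \textbf{if} direction, suppose every factor $X_i$ is vertex-transitive. Given two vertices $(u_1,\dots,u_k)$ and $(v_1,\dots,v_k)$ of $X = X_1 \cp \dots \cp X_k$, I would choose for each $i$ an automorphism $\alpha_i$ of $X_i$ with $u_i^{\,\alpha_i} = v_i$, and then take the coordinatewise map $\beta$ of $X$ acting as $\alpha_i$ on the $i$-th coordinate. This $\beta$ is an automorphism of $X$ --- it is just the composite of the single-factor automorphisms described immediately before Theorem~\ref{thm:autoproduct} --- and it carries the first vertex to the second. Note that this direction needs neither connectedness nor relative primality.

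For the \textbf{only if} direction, suppose $X$ is vertex-transitive (and connected, since its factors are). Using the unique prime factorisation quoted from \cite{ImrichKlavzar} in the text, I would write $X \cong W_1 \cp \dots \cp W_m$, where $W_j$ is the Cartesian product of $c_j \ge 1$ copies of a connected prime graph $Y_j$ and $Y_1,\dots,Y_m$ are pairwise non-isomorphic; then $W_1,\dots,W_m$ are pairwise relatively prime, since they have no common prime factor. By Theorem~\ref{thm:autoproduct} every automorphism of $X$ acts coordinatewise with respect to this decomposition, so projecting the vertex-transitivity of $X$ onto the $j$-th coordinate shows that each $W_j$ is vertex-transitive. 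The next step is to deduce that a product $W_j = Y_j^{\,c_j}$ of copies of a single prime must have $Y_j$ itself vertex-transitive: here $\Aut(W_j) \cong \Aut(Y_j) \wr \Sym(c_j)$, and the orbit of a ``diagonal'' vertex $(v,v,\dots,v)$ is exactly $O(v)^{c_j}$, where $O(v)$ denotes the $\Aut(Y_j)$-orbit of $v$; vertex-transitivity of $W_j$ then forces $O(v) = V(Y_j)$. Finally, since the prime factors of each $X_i$ are --- by uniqueness of the prime factorisation of $X$ --- isomorphic to some of the $Y_j$, every $X_i$ is a Cartesian product of vertex-transitive graphs, hence vertex-transitive by the \textbf{if} direction already proved.

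I expect the main obstacle to be the repeated-factor case: when a prime occurs with multiplicity greater than one in $X$, the relatively-prime form of the automorphism theorem (Theorem~\ref{thm:autoproduct}, and likewise Corollary~\ref{cor:autoproduct}) does not apply directly, so one must either invoke the more general wreath-product description of $\Aut(Y^{\,c})$ from \cite{ImrichKlavzar} or carry out the short direct orbit computation sketched above. Everything else is routine bookkeeping with the prime decomposition and Propositions~\ref{prop:connected}.
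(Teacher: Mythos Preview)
Your argument is correct. Both directions are handled properly: the \textbf{if} direction is the standard coordinatewise construction, and for the \textbf{only if} direction your reduction via prime factorisation, followed by the diagonal-orbit computation in $\Aut(Y_j) \wr \Sym(c_j)$, works as stated.

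By way of comparison, the paper does not actually give a proof of this corollary. It simply observes that the relatively-prime case is immediate from Theorem~\ref{thm:autoproduct} and then refers the reader to \cite[Proposition~4.18]{ImrichKlavzar} for the general case. Your approach is therefore more explicit: you reconstruct the argument from the prime factorisation and the wreath-product description of $\Aut(Y^{\,c})$, rather than citing the finished result. The trade-off is that you still need to invoke that wreath-product structure from \cite{ImrichKlavzar}, so you have not entirely escaped the external reference --- but you have isolated exactly which ingredient from that book is doing the work, which the paper does not.
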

%
Corollary~\ref{cor:producttransitive} follows directly from Theorem \ref{thm:autoproduct} 
when the factors are pairwise relatively prime.  But it is also true in the general case 
--- for a proof, see \cite[Proposition 4.18]{ImrichKlavzar}.

We now come to the key observation  we need to prove our main theorem. 

\begin{Theorem}                                           \label{thm:Cartesian}
Let $X_1, \dots, X_k$ be non-trivial connected vertex-transitive graphs, 
with arc-types $\tau_1, \dots,\tau_k$.
Then also $X =X_1 \cp \dots \cp X_k$  is a connected vertex-transitive graph,
and if $X_1, \dots, X_k$ are pairwise relatively prime, 
then the arc-type of $X$ is $\tau_1 + \dots + \tau_k$.
\end{Theorem}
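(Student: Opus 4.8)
The plan is to reduce everything to the action of vertex-stabilisers on neighbourhoods, using the structural description of $\Aut(X)$ for a product of relatively prime factors. First I would note that the claimed facts about $X$ being connected and vertex-transitive are immediate: connectedness follows from Proposition~\ref{prop:connected}, and vertex-transitivity from Corollary~\ref{cor:producttransitive}. By commutativity and associativity of $\cp$, it suffices to treat the case $k=2$, i.e.\ $X = X_1 \cp X_2$ with $X_1, X_2$ connected, non-trivial, vertex-transitive and relatively prime; the general statement then follows by an easy induction on $k$ (one must observe along the way that a Cartesian product of pairwise-relatively-prime factors is still relatively prime to each remaining factor, which follows from uniqueness of prime factorisation in \cite[Theorem~4.9]{ImrichKlavzar}).

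The heart of the argument is the following. Fix a vertex $v = (v_1, v_2)$ of $X$. The neighbourhood $X(v)$ splits naturally as a disjoint union $X(v) = X_1(v_1)\times\{v_2\} \;\cup\; \{v_1\}\times X_2(v_2)$, corresponding to ``moving in the first coordinate'' and ``moving in the second coordinate''. Now I invoke Theorem~\ref{thm:autoproduct}: every automorphism $\varphi$ of $X$ acts coordinatewise as $\varphi = (\varphi_1, \varphi_2)$ with $\varphi_i \in \Aut(X_i)$. Hence an automorphism fixing $v$ must have $\varphi_i$ fixing $v_i$, so the stabiliser satisfies $\Aut(X)_v \cong \Aut(X_1)_{v_1} \times \Aut(X_2)_{v_2}$, acting on $X(v)$ as a direct product acting on the disjoint union of the two neighbourhoods: $\Aut(X_1)_{v_1}$ acts on $X_1(v_1)\times\{v_2\}$ in the obvious way while fixing $\{v_1\}\times X_2(v_2)$ pointwise, and symmetrically for $\Aut(X_2)_{v_2}$. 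Therefore the orbits of $\Aut(X)_v$ on $X(v)$ are exactly the orbits of $\Aut(X_1)_{v_1}$ on $X_1(v_1)$ together with the orbits of $\Aut(X_2)_{v_2}$ on $X_2(v_2)$, with the same sizes. Since (as recorded earlier in Section~\ref{sec:arc-types}) the summands of the arc-type are precisely the sizes of these neighbourhood-orbits, the unbracketed/bracketed summands of $\tau_1$ and $\tau_2$ combine to give the multiset of summands of $\operatorname{at}(X)$.

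The one point requiring a little care --- and I expect it to be the main obstacle --- is the \emph{marking}, i.e.\ showing that an orbit of $\Aut(X_1)_{v_1}$ on $X_1(v_1)$ is self-paired \emph{as an orbit for $X_1$} if and only if the corresponding orbit of $\Aut(X)_v$ on $X(v)$ is self-paired \emph{as an orbit for $X$}. For this I would use the characterisation from Section~\ref{sec:arc-types}: an orbit $\Lambda \subseteq X_1(v_1)$ of $\Aut(X_1)_{v_1}$ is self-paired iff for some (equivalently every) $w_1 \in \Lambda$ there is an automorphism of $X_1$ carrying the arc $(v_1, w_1)$ to $(w_1, v_1)$. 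If such an $\alpha \in \Aut(X_1)$ exists, then $(\alpha, \mathrm{id})\in\Aut(X)$ carries $((v_1,v_2),(w_1,v_2))$ to $((w_1,v_2),(v_1,v_2))$, so the corresponding $X$-orbit is self-paired. Conversely, if $\varphi = (\varphi_1,\varphi_2) \in \Aut(X)$ reverses the arc $((v_1,v_2),(w_1,v_2))$, then comparing coordinates forces $\varphi_2$ to fix $v_2$ and $\varphi_1$ to carry $(v_1,w_1)$ to $(w_1,v_1)$, so the $X_1$-orbit is self-paired. Hence the bracketing is preserved exactly, and $\operatorname{at}(X) = \tau_1 + \tau_2$ as marked partitions. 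Finally I would spell out the induction step for general $k$, noting that $X_1 \cp \dots \cp X_{k-1}$ is relatively prime to $X_k$, so Theorem~\ref{thm:autoproduct} applies and the inductive hypothesis gives $\operatorname{at}(X_1 \cp \dots \cp X_{k-1}) = \tau_1 + \dots + \tau_{k-1}$, whence $\operatorname{at}(X) = \tau_1 + \dots + \tau_k$.
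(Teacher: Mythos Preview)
Your proposal is correct and follows essentially the same approach as the paper: both use Theorem~\ref{thm:autoproduct} to identify $\Aut(X)_v$ with the product of the factor stabilisers, then show that the orbits on the neighbourhood (with their pairing) are exactly those inherited from the factors. The only cosmetic difference is that the paper handles all $k$ factors simultaneously rather than reducing to $k=2$ and inducting, and it phrases the pairing check in terms of ``arcs in paired orbits'' rather than ``self-paired orbits'', but these are equivalent.
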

\begin{proof}
First, the graph $X$ is connected by Proposition \ref{prop:connected}, 
and vertex-transitive by Corollary  \ref{cor:producttransitive}.
For the second part, suppose that $X_1, \dots, X_k$ are pairwise relatively prime.  
Then by Corollary \ref{cor:autoproduct}, 
we know that $\Aut(X) \cong \Aut(X_1) \times \dots \times \Aut(X_k)$. 
Moreover, by Theorem \ref{thm:autoproduct}, the stabiliser in $\Aut(X)$ of 
a vertex $(v_1,\dots,v_k)$ of $X$ is isomorphic to $\Aut(X_1)_{v_1} \times \dots \times \Aut(X_k)_{v_k}$.

We will now show that two arcs incident with a given vertex $u = (u_1,\dots,u_k)$ in $X$ 
are in the same orbit of $\Aut(X)$ if and only if the corresponding arcs are in the same 
orbit of $\Aut(X_i)$ for some $i$, and that two such arcs are in paired orbits of $\Aut(X)$ 
if and only if the corresponding arcs of $X_i$ belong to paired orbits of $\Aut(X_i)$ for some $i$.
This will imply that the sizes of arc-orbits of $\Aut(X)$ 
on $X$ match the sizes of arc-orbits of the subgroups $\Aut(X_i)$ 
on the corresponding $X_i$, 
for $1 \le i \le k$,  
and hence that the arc-type of $X$ is just the sum of the arc-types of $X_1,\dots,X_k$.

So suppose  $u' = (u_1^{\,\prime},\dots,u_k^{\,\prime})$ 
and $u'' = (u_1^{\,\prime\prime},\dots,u_k^{\,\prime\prime})$ 
are adjacent to $u = (u_1,\dots,u_k)$ in $X$, 
and that the arcs $(u,u')$ and $(u,u'')$ lie in the same orbit of $\Aut(X)$.
Then there exists an automorphism $\varphi$ of $X$ taking $(u,u')$ to $(u,u'')$, 
and since $\varphi$ stabilises $u$, we know that $\varphi=(\varphi_1,\dots,\varphi_k)$ 
where $\varphi_i \in \Aut(X_i)_{u_i}$ for $1 \le i \le k$.  
Also $u'$ differs from $u$ in only one coordinate, say the $i$-th one, 
in which case $u_j^{\,\prime} = u_j$ for $j \ne i$, 
and then since $\varphi=(\varphi_1,\dots,\varphi_k)$ takes $u'$ to $u''$ 
(and $\varphi_j$ fixes $u_j$), we find that $u_j^{\,\prime\prime} = u_j$, 
so that $u''$ differs from $u$ only in the $i$-th coordinate as well. 
In particular, $\varphi_i$ fixes $u_i$ and takes $u_i^{\,\prime}$ to $u_i^{\,\prime\prime}$, 
so the arcs $(u_i,u_i^{\,\prime})$ and $(u_i,u_i^{\,\prime\prime})$ lie in the same 
orbit of $\Aut(X_i)$.  

The converse is easy: if the arcs $(u_i,u_i^{\,\prime})$ and $(u_i,u_i^{\,\prime\prime})$ 
lie in the same orbit of $\Aut(X_i)$, and $\varphi_i$ is an automorphism of $X_i$ 
taking $(u_i,u_i^{\,\prime})$ to $(u_i,u_i^{\,\prime\prime})$, 
then letting $u_j^{\,\prime} = u_j^{\,\prime\prime} = u_j$  for $j \ne i$, 
and  $u' = (u_1^{\,\prime},\dots,u_k^{\,\prime})$ and $u'' = (u_1^{\,\prime\prime},\dots,u_k^{\,\prime\prime})$, 
we find that the automorphism of $X$ induced by $\varphi_i$ takes $(u,u')$ to $(u,u'')$, 
and so these two arcs lie in the same orbit of $\Aut(X)$.

On the other hand, suppose the arcs $(u,u')$ and $(u,u'')$ lie in different but paired orbits 
of $\Aut(X)$. Then there exists an automorphism $\varphi$ of $X$ taking $(u,u')$ to $(u'',u)$, 
and $\varphi=(\varphi_1,\dots,\varphi_k)$ where $\varphi_i \in \Aut(X_i)$ for $1 \le i \le k$.  
Again, $u'$ differs from $u$ in only one coordinate, say the $i$-th one, 
in which case $u_j^{\,\prime} = u_j$ for $j \ne i$, 
and since $\varphi$ takes $u'$ to $u$, we find that $\varphi_j$ fixes $u_j$, 
and then since $\varphi$ takes $u$ to $u''$, also $u_j^{\,\prime\prime} = u_j$. 
Thus, as before, $u''$ differs from $u$ only in the $i$-th coordinate, 
and the arcs $(u_i,u_i^{\,\prime})$ and $(u_i,u_i^{\,\prime\prime})$ lie in the same 
orbit of $\Aut(X_i)$.  
The converse is analogous to the previous case, and this completes the proof.
\end{proof}

Next, the following observations are often helpful when proving that a given 
graph is prime with respect to the Cartesian product.
The first two are easy, and the third one is proved in \cite{ImrichPeterin}, for example. 

\begin{Lemma}                                     \label{lem:notCartesian}
Let $X=X_1 \cp X_2$ be a Cartesian product of two connected
graphs, each of which has at least two vertices.
Then every edge of $X$ is contained in a $4$-cycle.
\end{Lemma}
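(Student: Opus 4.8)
The plan is to show that any edge of $X = X_1 \cp X_2$ lies on a four-cycle by exploiting the product structure and the fact that both factors have at least two vertices. First I would recall the description of edges in a Cartesian product: an edge of $X$ has one of two forms, namely $\{(u,x),(v,x)\}$ where $uv \in E(X_1)$ and $x \in V(X_2)$ is fixed (a ``$X_1$-layer'' edge), or $\{(u,x),(u,y)\}$ where $xy \in E(X_2)$ and $u \in V(X_1)$ is fixed (a ``$X_2$-layer'' edge). By the commutativity of $\cp$ it suffices to treat one of these cases, say the first.

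So suppose $e = \{(u,x),(v,x)\}$ with $uv \in E(X_1)$. Since $X_2$ has at least two vertices and is connected, the vertex $x$ has at least one neighbour $y$ in $X_2$. I would then consider the four vertices $(u,x)$, $(v,x)$, $(v,y)$, $(u,y)$. By the definition of the Cartesian product, $(u,x)$ is adjacent to $(v,x)$ and to $(u,y)$; similarly $(v,x)$ is adjacent to $(v,y)$, and $(v,y)$ is adjacent to $(u,y)$. Moreover these four vertices are pairwise distinct, since $u \ne v$ and $x \ne y$. Hence $(u,x),(v,x),(v,y),(u,y)$ form a $4$-cycle through the edge $e$. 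The case of an $X_2$-layer edge is identical with the roles of the two factors swapped.

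This argument is essentially immediate, so I do not anticipate any real obstacle; the only point requiring a moment's care is verifying that the four vertices named are genuinely distinct, which is exactly where the hypothesis that each factor has at least two vertices (equivalently, at least one edge, so a neighbour $y \ne x$ exists, and a neighbour $v \ne u$ exists) is used. No connectivity of the factors is strictly needed beyond guaranteeing the existence of the required neighbours, but since the statement already assumes connectedness, I would simply invoke that.
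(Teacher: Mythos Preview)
Your argument is correct and is exactly the standard one-line proof the paper has in mind; the paper itself omits the proof, simply remarking that the lemma is ``easy'' (and attributes the less obvious companion statement, Lemma~\ref{lem:notCartesian1}, to \cite{ImrichPeterin}). Your observation that connectedness is more than is needed---existence of a neighbour in each factor suffices---is also apt.
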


\begin{Corollary}                                     \label{cor:notCartesian}
Let $X$ be a connected graph.
If some edge of $X$ is not contained in any $4$-cycle, then $X$ is prime.
In particular, if $X$ has no $4$-cycles, then $X$ is prime.
\end{Corollary}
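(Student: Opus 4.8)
The plan is to argue directly from the definition of the Cartesian product, by splitting the edges of $X = X_1 \cp X_2$ into two types. Fix an edge $e$ of $X$. By definition, any two adjacent vertices of $X$ agree in exactly one coordinate, so either $e = \{(u,x),(v,x)\}$ with $\{u,v\} \in E(X_1)$ and $x \in V(X_2)$ (call this an $X_1$-edge), or $e = \{(u,x),(u,y)\}$ with $u \in V(X_1)$ and $\{x,y\} \in E(X_2)$ (an $X_2$-edge). Since the two factors play symmetric roles, it suffices to handle the first case and then remark that the second is identical with $X_1$ and $X_2$ interchanged.

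So suppose $e = \{(u,x),(v,x)\}$ with $\{u,v\} \in E(X_1)$. This is where the hypotheses enter: because $X_2$ is connected and has at least two vertices, the vertex $x$ has a neighbour $y$ in $X_2$. I would then exhibit the four vertices $(u,x),\,(v,x),\,(v,y),\,(u,y)$ and verify that consecutive pairs are adjacent in $X$: the pairs $\{(u,x),(v,x)\}$ and $\{(v,y),(u,y)\}$ are $X_1$-edges since $\{u,v\} \in E(X_1)$, while $\{(v,x),(v,y)\}$ and $\{(u,y),(u,x)\}$ are $X_2$-edges since $\{x,y\} \in E(X_2)$. As $u \ne v$ and $x \ne y$, these four vertices are pairwise distinct, so they span a $4$-cycle through $e$. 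For an $X_2$-edge $e = \{(u,x),(u,y)\}$ one argues in exactly the same way, this time using that $X_1$ is connected with at least two vertices to obtain a neighbour $v$ of $u$ in $X_1$, and forming the $4$-cycle on $(u,x),(v,x),(v,y),(u,y)$.

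There is essentially no obstacle: the only step that uses the stated hypotheses is the production of the neighbour $y$ (respectively $v$) in the \emph{other} factor, and the assumption that each factor has at least two vertices is exactly what guarantees it (if a factor were a single vertex, $X$ would merely be a copy of the other factor and the conclusion could fail). Thus the write-up is a short two-case adjacency check, and the only thing to be careful about is recording why the four vertices of the exhibited cycle are pairwise distinct.
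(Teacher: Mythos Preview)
Your argument is correct and is exactly the natural one; the paper itself gives no proof, merely remarking that Lemma~\ref{lem:notCartesian} and Corollary~\ref{cor:notCartesian} are ``easy''. What you have written is in fact a proof of Lemma~\ref{lem:notCartesian} (every edge of a non-trivial Cartesian product lies in a $4$-cycle), from which the Corollary follows by contraposition. To present it as a proof of the Corollary, you should add one sentence of framing: suppose $X$ is not prime, so $X \cong X_1 \cp X_2$ with each $X_i$ non-trivial; since $X$ is connected, Proposition~\ref{prop:connected} forces both $X_i$ to be connected, and then your $4$-cycle construction applies to every edge, contradicting the hypothesis. Without that sentence the reader may wonder why you are entitled to assume the factors are connected.
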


\begin{Lemma}                                      \label{lem:notCartesian1}
Let $X$ be a Cartesian product of connected graphs. \\[-18pt] 
\begin{itemize}
\item[{\rm (a)}] All the edges in a cycle of length $3$ in $X$ belong to the same factor of $X$. \\[-20pt] 
\item[{\rm (b)}] Let $(v,u,w,x)$ be any $4$-cycle in $X$. Then the edges $\{v,u\}$ 
and $\{w,x\}$ belong to the same factor of $X$, as do the edges $\{u,w\}$ and $\{x,v\}$. \\[-20pt] 
\item[{\rm (c)}]  If $e$ and $f$ are incident edges that are not in the same factor of $X$,
then there exists a unique $4$-cycle that contains $e$ and $f$, and this $4$-cycle has no diagonals.
\end{itemize}
\end{Lemma}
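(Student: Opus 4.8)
\noindent
The plan is to set up coordinates once and then treat the three parts uniformly. Write $X = X_1 \cp \dots \cp X_m$ with each $X_\ell$ connected, so a vertex of $X$ is a tuple $(v_1,\dots,v_m)$ with $v_\ell \in V(X_\ell)$, and an edge of $X$ joins two tuples differing in exactly one coordinate; I will say such an edge \emph{lies in factor $\ell$} if its endpoints differ precisely in the $\ell$-th coordinate. Part (a) is then a one-line count: if a $3$-cycle $v,u,w$ had edges $\{v,u\}$ in factor $i$ and $\{u,w\}$ in factor $j$ with $i \ne j$, then $v$ and $w$ would differ in \emph{both} coordinates $i$ and $j$ (since $v_i \ne u_i = w_i$ while $v_j = u_j \ne w_j$), hence could not be adjacent --- contradiction; so all three edges lie in the single factor in which any two of $v,u,w$ differ.

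For part (b) the key device is projection. Given a $4$-cycle $v \to u \to w \to x \to v$, fix a coordinate $\ell$ and look at the induced closed walk $v_\ell \to u_\ell \to w_\ell \to x_\ell \to v_\ell$ in $X_\ell$, in which a step is trivial exactly when the corresponding edge of the $4$-cycle does not lie in factor $\ell$; deleting the trivial steps leaves a genuine closed walk in $X_\ell$ whose length is the number of the four edges lying in factor $\ell$. Since $X_\ell$ is simple it has no loop, so this length is never $1$; as the four lengths sum to $4$, either all four edges lie in one factor (and then (b) is immediate), or there are exactly two factors $i \ne j$ each carrying two of the four edges. In the latter case the cyclic arrangement of factors around the $4$-cycle is one of $(i,j,i,j)$, $(i,i,j,j)$, $(i,j,j,i)$; the first gives $a=c$ and $b=d$ directly, while the other two are impossible because they force two of the four (necessarily distinct) vertices to coincide --- for instance in $(i,i,j,j)$ the first two edges make $v_i,u_i,w_i$ pairwise distinct, whereas the last two edges force $w_i = v_i$. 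I expect this small case-check --- really just bookkeeping about which coordinates agree --- to be the only mildly delicate point of the whole lemma.

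For part (c), let $e = \{u,v\}$ lie in factor $i$ and $f = \{u,w\}$ lie in factor $j$ with $i \ne j$. For existence I will exhibit the fourth vertex explicitly: set $x_i = v_i$, $x_j = w_j$ and $x_\ell = u_\ell$ for $\ell \notin \{i,j\}$; a direct check shows $\{v,x\}$ is an edge in factor $j$ and $\{x,w\}$ an edge in factor $i$, with $u,v,x,w$ pairwise distinct, so $(u,v,x,w)$ is a $4$-cycle through $e$ and $f$. For uniqueness, any $4$-cycle through the incident edges $e,f$ has the form $u - v - z - w - u$, and part (b) forces $\{v,z\}$ into factor $j$ and $\{z,w\}$ into factor $i$; these two conditions pin down every coordinate of $z$ (namely $z_i = v_i$, $z_j = w_j$, and $z_\ell = u_\ell$ otherwise), so $z = x$. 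Finally the $4$-cycle has no diagonal, since its two potential diagonals join $u$ to $x$ and $v$ to $w$, and in each case the endpoints differ in both coordinates $i$ and $j$, hence are non-adjacent in $X$. (This last part also appears in \cite{ImrichPeterin}, but the argument above is self-contained.)
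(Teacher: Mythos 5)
Your proof is correct, but note that the paper offers no argument of its own for this lemma: it simply declares parts (a) and (b) easy and refers to \cite{ImrichPeterin} for part (c). Your coordinate/projection argument is a sound, self-contained substitute, and it is essentially the standard one: (a) via the observation that two incident edges in different factors make their outer endpoints differ in two coordinates; (b) by projecting the $4$-cycle to each factor, noting that the reduced projected walk is closed of length equal to the number of edges in that factor, so no factor carries exactly one edge, leaving only the distributions $4$ and $2+2$ and then killing the arrangements $(i,i,j,j)$ and $(i,j,j,i)$ because they collapse two vertices of the cycle; (c) by writing down the fourth vertex coordinatewise, using (b) to force the factors of the two remaining edges and hence pin down every coordinate of the fourth vertex, and ruling out diagonals by the two-coordinate-difference argument. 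One small point to tighten in (b): from the two factor-$i$ edges alone you only get $v_i\ne u_i$ and $u_i\ne w_i$; the further claim $v_i\ne w_i$ (your ``pairwise distinct'') also needs that $v\ne w$ as vertices of the cycle together with the fact that those edges force $v_\ell=u_\ell=w_\ell$ for all $\ell\ne i$. In fact that last observation makes the detour unnecessary: the factor-$j$ edges give $v_i=w_i$, so combined with $v_\ell=w_\ell$ for $\ell\ne i$ you get $v=w$ at once, contradicting distinctness of the cycle's vertices.
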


\section{Thickened covers}
\label{sec:thcov}

In this section we explain the general notion of a thickened cover of a graph, 
and show how it can be used to build larger vertex-transitive graphs from a given one. 

\smallskip
Let $X$ be any simple graph, $F$ any subset of the edge-set of $X$, 
and $m$ any positive integer.  Then we define $X(F,m)$ to be the graph 
with vertex set $V(X) \times \bZ_m$, and with edges of two types: \\[-20pt] 
\begin{itemize}
\item[(a)] an edge from $(u,i)$ to $(v,i)$, for every $i \in \bZ_{m}$ 
  and every $\{u,v\}  \in E(X) \setminus F$, \\[-20pt]
\item[(b)] an edge from $(u,i)$ to $(v,j)$, for every $(i,j) \in \bZ_{m} \times \bZ_{m}$ 
  and every $\{u,v\} \in F$. \\[-15pt]
\end{itemize}
We call $X(F,m)$ a \emph{thickened m-cover} of $X$ over $F$.

\smallskip
In other words, a thickened $m$-cover of a graph $X$ over a given set $F$ of edges of $X$ 
is obtained by replacing each vertex of $X$  by $m$ vertices, and each edge by 
the complete bipartite graph $K_{m,m}$ if the edge lies in $F$, 
or by $mK_2$ (a set of $m$ `parallel' edges) if the edge does not lie in $F$. 

For example, the thickened $3$-cover of the path graph $P_6$ on $6$ vertices 
over the unique $1$-factor of $P_6$ is shown in Figure \ref{fig:thcover}.

\begin{figure}[H]
\centering
\includegraphics[width=6cm]{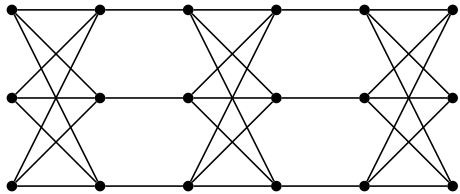}
\caption{A thickened $3$-cover of $P_6$ (over its $1$-factor)}
\label{fig:thcover}
\end{figure}   

Here we note that $X(F,1)$ is isomorphic to $X$, 
while $X(\emptyset,m)$ is isomorphic to $mX$ (the union of $m$ copies of $X$).  

Also the base graph $X$ is a quotient of $X(F,m)$, obtainable by identifying all 
vertices $(u,i)$ that have the same first coordinate, 
but $X(F,m)$ is not a covering graph of $X$ in the usual sense of that term 
when $F$ is non-empty and $m > 1$, because in that case the valency of a vertex $(u,i)$ 
of $X(F,m)$ is greater than the valency of $u$. 

On the other hand, if $X^{(m)}$ is the multigraph obtained from $X$ by replacing
each edge from $F$ by $m$ parallel edges, then $X(F,m)$ is a regular covering
graph of $X^{(m)}$, with voltages taken from $\bZ_m$: we may choose the voltages
of the edges not in $F$ to be $0$, and for each set of $m$ parallel edges, choose a 
direction and then assign distinct voltages from $\bZ_m$ to these edges. 
The derived graph of $X^{(m)}$ with this voltage assignment is a covering graph 
of $X^{(m)}$ that is isomorphic to $X(F,m)$.

(For the definitions of voltage graphs and covering graphs, see the book on 
topological graph theory by Gross and Tucker \cite{GrT}.) 

\smallskip 
%
For each $u \in V(X)$, we may call the set $\{(u,i) : \, i \in \bZ_m\}$ of vertices 
of $X(F,m)$ the \emph{fibre} over the vertex $u$ of $X$. 
Similarly the \emph{fibre} over the edge $\{u,v\}$ of $X$ is the 
set $\{\{(u,i),(v,i)\} : \, i \in \bZ_m\}$ of edges of $X(F,m)$ if $\{u,v\} \in E(X)\setminus F$, 
or the set $\{\{(u,i),(v,j)\} : \, i,j \in \bZ_m\}$ if $\{u,v\} \in F$. 
Also for each $i \in \bZ_m$ we call  the subgraph of $X(F,m)$ induced by the vertices 
$\{(u,i) : \, u \in V(X)\}$  the \emph{$i$-th layer} of $X(F,m)$.

We now define three families of bijections on the vertex set of $X(F,m)$.
The first is $\tilde{\varphi}$, which is induced by addition of $1$ mod $m$ on $\bZ_m$, 
and given by the rule
\begin{align}                                         \label{eq:varphi}
\tilde{\varphi}  &: (u,i) \mapsto (u,i+1) \quad \mbox{for all} \ u \in V(X) \ \mbox{and all} \ i \in \bZ_m.
\end{align}
Next, if $\psi$ is any automorphism of $X$, then we define $\tilde{\psi}$  by  the rule
\begin{align}                                            \label{eq:psi}
\tilde{\psi}  &: (u,i) \mapsto (u^\psi,i) \quad  \mbox{for all} \ u \in V(X) \ \mbox{and all} \ i \in \bZ_m.
\end{align}
Finally, if $i,j \in \bZ_m$ and $\{u,v\}$ is an edge in $F$ such that 
$u$ and $v$ lie in different components of $X \setminus F$ 
(the graph obtained from $X$ by deleting all the edges in $F$), 
then we define the bijection $\tilde\theta = \tilde{\theta}(u,v,i,j)$ by 
\begin{equation}                                                   \label{eq:theta}
\tilde{\theta} : \, (w,k) \mapsto \, \begin{cases}
                          (w,j) & \mbox{if} \ \ k=i  \ \mbox{and} \ w \
                          \mbox{lies in the same component of} \ X \setminus F \ \mbox{as} \ v,\\
                         (w,i) & \mbox{if} \ \ k=j  \ \mbox{and} \ w \
                          \mbox{lies in the same component of} \ X \setminus F \ \mbox{as} \ v,\\
                          (w,k)  & \mbox{otherwise.} \\
                           \end{cases}
\end{equation}

\begin{Lemma}                                                                   \label{lemma:varphi}
If $X$ is any  graph, $F \subseteq E(X)$ and $m \ge 2$, 
then $\tilde{\varphi}$ is an automorphism of $X(F,m)$.
\end{Lemma}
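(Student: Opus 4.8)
The plan is to verify directly that $\tilde\varphi$ preserves adjacency and non-adjacency in $X(F,m)$, exploiting the fact that the two edge-types in the definition of $X(F,m)$ are described purely in terms of which layer (second coordinate) a vertex lies in, and that adding $1$ mod $m$ is a bijection on $\bZ_m$. First I would note that $\tilde\varphi$ is a bijection on $V(X)\times\bZ_m$, since it is the product of the identity on $V(X)$ with the cyclic shift $i\mapsto i+1$ on $\bZ_m$, which is a bijection. So it suffices to show that $\{(u,i),(v,j)\}\in E(X(F,m))$ if and only if $\{(u,i+1),(v,j+1)\}\in E(X(F,m))$.

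The proof then splits according to the two types of edges. For an edge of type (a), we have $i=j$ and $\{u,v\}\in E(X)\setminus F$; applying $\tilde\varphi$ gives the pair $(u,i+1),(v,i+1)$, which again has equal second coordinates and the same underlying edge $\{u,v\}\notin F$, so it is again an edge of type (a) --- and conversely, since $\tilde\varphi^{-1}$ has exactly the same form (shift by $-1$ mod $m$), a type (a) edge of $X(F,m)$ pulls back to a type (a) edge. For an edge of type (b), we have $\{u,v\}\in F$ and $i,j$ arbitrary in $\bZ_m$; applying $\tilde\varphi$ gives $(u,i+1),(v,j+1)$ with $\{u,v\}\in F$ still and $i+1,j+1\in\bZ_m$ arbitrary, so it is again an edge of type (b), and again the converse follows by applying $\tilde\varphi^{-1}$. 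Since every edge of $X(F,m)$ is of type (a) or type (b), and these two cases are disjoint in form (equal versus unconstrained second coordinates, together with membership of $\{u,v\}$ in $E(X)\setminus F$ versus $F$), this establishes that $\tilde\varphi$ and $\tilde\varphi^{-1}$ both map edges to edges, hence $\tilde\varphi\in\Aut(X(F,m))$.

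There is essentially no hard part here: the statement is a routine unwinding of the definition of the thickened cover, and the only thing to be careful about is handling non-adjacency as well as adjacency --- which is cleanly dispatched by observing that $\tilde\varphi^{-1}$ is of the same shape as $\tilde\varphi$, so ``maps edges to edges'' in both directions gives ``maps non-edges to non-edges'' for free. I would also remark in passing that the hypothesis $m\ge 2$ is not really needed for this particular lemma (when $m=1$ the map $\tilde\varphi$ is just the identity), but it is stated for consistency with the later lemmas about $\tilde\theta$, where the distinction between layers becomes substantive.
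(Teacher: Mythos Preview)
Your proof is correct and follows the same direct approach as the paper, which simply states that $\tilde\varphi$ is a bijection and ``obviously sends edges of $X(F,m)$ to edges''. You have just unpacked that one-line observation into an explicit case check on the two edge types, which is fine and arguably clearer.
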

\begin{proof}
The mapping $\tilde{\varphi}$ is a bijection and obviously sends edges of
$X(F,m)$ to edges, so is an automorphism of $X(F,m)$.
\end{proof}

\begin{Lemma}                                                                 \label{lemma:psi}
If $X$ is a vertex-transitive graph, and $m \ge 2$, 
and $F \subseteq E(X)$ is a union of some edge-orbits of $X$, 
then  $\tilde{\psi}$  is an automorphism of $X(F,m)$ for every $\psi \in \Aut(X)$.
\end{Lemma}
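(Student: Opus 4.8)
The plan is to show that $\tilde{\psi}$ both maps $V(X(F,m))$ bijectively to itself and sends edges to edges (and non-edges to non-edges). Since $\psi$ is a permutation of $V(X)$, the map $\tilde{\psi}\colon (u,i)\mapsto (u^\psi,i)$ is clearly a bijection of $V(X)\times\bZ_m$, with inverse induced by $\psi^{-1}$; so the only real content is that $\tilde{\psi}$ respects the edge relation. Because $\tilde{\psi}$ is a bijection, it suffices to check that it maps edges to edges (the preimage direction then follows by applying the same argument to $\psi^{-1}$, or by a counting argument since the graph is finite).

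First I would recall that $X(F,m)$ has two types of edges, as in parts (a) and (b) of the definition. For a type-(a) edge $\{(u,i),(v,i)\}$ with $\{u,v\}\in E(X)\setminus F$, its image under $\tilde{\psi}$ is $\{(u^\psi,i),(v^\psi,i)\}$; since $\psi$ is an automorphism of $X$, the pair $\{u^\psi,v^\psi\}$ is again an edge of $X$, and here is where the hypothesis on $F$ enters: because $F$ is a union of edge-orbits of $\Aut(X)$, the edge $\{u,v\}$ lying \emph{outside} $F$ forces $\{u^\psi,v^\psi\}$ to lie outside $F$ as well. Hence $\{(u^\psi,i),(v^\psi,i)\}$ is again a type-(a) edge of $X(F,m)$. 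For a type-(b) edge $\{(u,i),(v,j)\}$ with $\{u,v\}\in F$, its image is $\{(u^\psi,i),(v^\psi,j)\}$; since $\psi$ preserves $X$ and $F$ is a union of edge-orbits, $\{u^\psi,v^\psi\}\in F$, and so the image is again a type-(b) edge for any choice of $i,j\in\bZ_m$. This handles both cases.

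To finish, I would note that every edge of $X(F,m)$ is of type (a) or type (b), so $\tilde{\psi}$ maps the edge set into itself; applying the same reasoning to $\psi^{-1}$ (which is also an automorphism of $X$, and for which $F$ is still a union of edge-orbits) shows $\tilde{\psi}^{-1}=\widetilde{\psi^{-1}}$ also maps edges into edges, so $\tilde{\psi}$ maps non-edges to non-edges too. Therefore $\tilde{\psi}$ is an automorphism of $X(F,m)$.

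I do not expect any genuine obstacle here; the lemma is essentially bookkeeping. The one point that must not be glossed over is the role of the hypothesis ``$F$ is a union of edge-orbits of $X$'': without it, an automorphism of $X$ could carry an edge in $F$ to an edge not in $F$ (or vice versa), which would mean $\tilde{\psi}$ sends a $K_{m,m}$-fibre to an $mK_2$-fibre, and then $\tilde{\psi}$ would fail to be even a well-defined graph map. So the main thing to be careful about is invoking that hypothesis at exactly the right moment in each of the two edge-type cases.
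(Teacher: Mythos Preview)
Your proof is correct and follows essentially the same approach as the paper's own proof: both establish that $\tilde{\psi}$ is a bijection and then check the two edge types separately, using the hypothesis that $F$ is a union of edge-orbits to ensure $\psi$ preserves both $F$ and $E(X)\setminus F$. Your version is slightly more explicit about the non-edge direction (via $\psi^{-1}$), but this is a minor elaboration rather than a different route.
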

\begin{proof}
The given mapping $\tilde{\psi}$ is clearly a bijection. 
Next, if $\{u,v\} \in F$ then also $\{u^\psi,v^\psi\} \in F$  by hypothesis, 
and so $\{(u,i),(v,j)\}^{\tilde{\psi}} = \{(\psi(u),i),(\psi(v),j)\}$ is an edge of $X(F,m)$, 
for all $i,j \in \bZ_{m}$.  
Similarly, if $\{u,v\} \in E(X) \setminus F$, then $\{u^\psi,v^\psi\} \in E(X) \setminus F$, 
and so $\{(u,i),(v,i)\}^{\tilde{\psi}} = \{(\psi(u),i),(\psi(v),i)\}$ is an edge of $X(F,m)$, 
for all $i \in \bZ_{m}$.  
\end{proof}

\begin{Corollary}                                               \label{cor:thcovervt}
If $X$ is a vertex-transitive graph, 
and $F \subseteq E(X)$ is a union of some edge-orbits of $X$, 
then $X(F,m)$ is vertex-transitive for every $m \ge 2$.
\end{Corollary}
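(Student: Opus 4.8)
The plan is to show that the automorphisms produced in Lemmas~\ref{lemma:varphi} and~\ref{lemma:psi} already act transitively on the vertex set of $X(F,m)$, so there is no need to invoke the more delicate bijections $\tilde\theta$. Recall that $V(X(F,m)) = V(X) \times \bZ_m$. First I would fix two arbitrary vertices $(u,i)$ and $(v,j)$ of $X(F,m)$ and aim to find an automorphism carrying the first to the second.

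The key step is to move in two stages, adjusting the $\bZ_m$-coordinate and then the $V(X)$-coordinate. By Lemma~\ref{lemma:varphi}, the map $\tilde\varphi$ is an automorphism of $X(F,m)$, and hence so is its power $\tilde\varphi^{\,j-i}$, which sends $(u,i)$ to $(u, i+(j-i)) = (u,j)$ while acting as $k \mapsto k+(j-i)$ on every fibre. Next, since $X$ is vertex-transitive, there exists $\psi \in \Aut(X)$ with $u^\psi = v$; because $F$ is a union of edge-orbits of $X$, Lemma~\ref{lemma:psi} applies and $\tilde\psi$ is an automorphism of $X(F,m)$, sending $(u,j)$ to $(u^\psi, j) = (v,j)$. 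Composing, $\tilde\psi \circ \tilde\varphi^{\,j-i}$ is an automorphism of $X(F,m)$ taking $(u,i)$ to $(v,j)$. As $(u,i)$ and $(v,j)$ were arbitrary, $X(F,m)$ is vertex-transitive.

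There is really no obstacle here: the statement is a direct corollary of the two preceding lemmas, and the only thing to check is that the composite is formed in the correct order (shift the layer index first, then apply the lifted graph automorphism, so that the vertex-fixing behaviour of the lifts does not interfere). One might also remark in passing that the subgroup of $\Aut(X(F,m))$ generated by $\tilde\varphi$ and $\{\tilde\psi : \psi \in \Aut(X)\}$ is isomorphic to $\Aut(X) \times \bZ_m$ (the two families commute), which makes the transitivity transparent, but this extra observation is not needed for the corollary itself.
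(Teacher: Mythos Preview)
Your proof is correct and follows essentially the same approach as the paper: the paper's one-line proof simply asserts that the subgroup of $\Aut(X(F,m))$ generated by $\tilde{\varphi}$ and $\{\tilde{\psi} : \psi \in \Aut(X)\}$ acts transitively on the vertex set, and you have spelled out precisely why that is so. Your two-stage decomposition (adjust the $\bZ_m$-coordinate via a power of $\tilde\varphi$, then the $V(X)$-coordinate via some $\tilde\psi$) is exactly the intended unpacking of that sentence.
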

\begin{proof}
The subgroup of $\Aut(X(F,m))$ generated by $\tilde{\varphi}$ 
and $\{\tilde{\psi} : \psi \in \Aut(X)\}$ acts transitively on the vertex set of $X(F,m)$.
\end{proof}

\begin{Lemma}                                                 \label{lemma:theta}
If $X$ is any graph, $F \subseteq E(X)$ and $m \ge 2$, 
and $\{u,v\}$ is any edge in $F$  such that $u$ and $v$ lie in different components of $X \setminus F$,  
then $\tilde{\theta} = \tilde{\theta}(u,v,i,j)$ is an automorphism of $X(F,m)$, for all $i,j \in \bZ_{m}$.    
\end{Lemma}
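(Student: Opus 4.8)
The plan is to verify two things directly: that $\tilde\theta$ is a bijection of the vertex set $V(X)\times\bZ_m$, and that it maps edges of $X(F,m)$ to edges; finiteness then upgrades this to an automorphism. For the bijection part I would observe that $\tilde\theta$ is an involution. If $i=j$ the formula defining $\tilde\theta$ degenerates to the identity map (the first two clauses agree, both returning $(w,i)$). If $i\ne j$, then on the fibres over the vertices of the component $C_v$ of $X\setminus F$ containing $v$, the map $\tilde\theta$ interchanges layer $i$ with layer $j$ and fixes all other layers, while on all fibres over vertices outside $C_v$ it is the identity; applying it twice clearly recovers the identity. Hence $\tilde\theta$ is a bijection with $\tilde\theta^{-1}=\tilde\theta$.

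The substantive (though still routine) step is the case analysis showing $\tilde\theta$ preserves adjacency. The key structural point is that an edge $\{p,q\}$ of $X$ with $\{p,q\}\notin F$ is an edge of $X\setminus F$, so $p$ and $q$ lie in the \emph{same} component of $X\setminus F$; consequently they are either both in $C_v$ or both outside $C_v$. Therefore, for a type (a) edge $\{(p,k),(q,k)\}$, the map $\tilde\theta$ either fixes both endpoints (when $p,q\notin C_v$) or applies the same layer-swap to the second coordinate of both endpoints (when $p,q\in C_v$), and in either case the image is again a type (a) edge over the same pair $\{p,q\}$. For a type (b) edge $\{(p,k),(q,l)\}$ with $\{p,q\}\in F$, the two endpoints may lie on opposite sides of the partition $\{C_v,\,V(X)\setminus C_v\}$ — in particular this is what happens for the fibre over $\{u,v\}$ itself — but since $X(F,m)$ contains $\{(p,k'),(q,l')\}$ for \emph{all} $k',l'\in\bZ_m$ whenever $\{p,q\}\in F$, altering the second coordinate of one or both endpoints still produces an edge of $X(F,m)$. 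Thus $\tilde\theta$ carries $E(X(F,m))$ into itself.

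Finally, since $X$, and hence $X(F,m)$, is finite, a bijection of the vertex set that maps the edge set into itself must permute the edge set; equivalently, apply the previous paragraph to $\tilde\theta^{-1}=\tilde\theta$ to get the reverse inclusion. Either way, $\tilde\theta$ is an automorphism of $X(F,m)$. I do not expect a genuine obstacle here — the proof is essentially bookkeeping — but the one point that needs care is keeping straight that edges \emph{not} in $F$ cannot straddle the two sides of the $C_v$-partition, whereas edges in $F$ are allowed to, since this asymmetry is exactly what makes the layer-swap on $C_v$ an automorphism rather than merely a bijection. (Note also that the hypothesis on $\{u,v\}$ guarantees $u\notin C_v$, which is what makes $\tilde\theta$ a nontrivial, "useful" automorphism in the applications that follow, even though the automorphism property itself only uses the component of $v$.)
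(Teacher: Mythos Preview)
Your proof is correct and follows essentially the same approach as the paper: verify that $\tilde\theta$ is a bijection (you do this via the involution observation, the paper just asserts it), then check edge-preservation by splitting into the two edge types, using the key fact that endpoints of a non-$F$ edge lie in the same component of $X\setminus F$. Your treatment of type~(b) edges is in fact slightly cleaner than the paper's, since you note that \emph{any} change of second coordinates over an $F$-edge still yields an edge, whereas the paper tacitly assumes the other endpoint is fixed.
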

\begin{proof}
The mapping $\tilde{\theta}$ is clearly a bijection, and to prove it is an automorphism of $X(F,m)$, 
all we have to do is show that it preserves the set $E'$ of edges incident with one or more vertices not fixed 
by $\tilde{\theta}$. 
So suppose $w$ is any vertex of $X$ lying in the same component of $X \setminus F$ as $v$, 
and consider the effect of $\tilde{\theta}$ on an edge from (say) the vertex $(w,i)$ to 
a vertex $(z,k)$ in $X(F,m)$. 
If $\{w,z\} \in E(X) \setminus F$ and $k = i$, 
then $z$ lies in the same component of $X \setminus F$ as $w$ and hence in the same one as $v$, 
and therefore $\tilde{\theta}$ takes $(w,i)$ to $(w,j)$, and $(z,k) = (z,i)$ to $(z,j)$, 
which is a neighbour of $(w,j)$. 
On the other hand, if $\{w,z\} \in F$ and $k$ is arbitrary, then $\tilde{\theta}$ takes $(w,i)$ to $(w,j)$, 
and $(z,k)$ to $(z,k)$, which is a neighbour of $(w,j)$. 
The analogous things happen for edges incident with $(w,j)$ in place of $(w,i)$, 
and so the set $E'$ is preserved by $\tilde{\theta}$, as required. 
\end{proof}

Next, we note that under the assumptions of Lemma \ref{lemma:theta}, 
each automorphism $\tilde{\theta}(u,v,i,j)$ fixes the vertex $(u,k)$, for every $k \in \bZ_m$, 
and therefore the stabiliser of every such $(u,k)$ contains the automorphisms $\tilde{\theta}(u,v,i,j)$ 
for all $i,j \in \bZ_m$.

We now give a helpful example of an application of this thickened cover construction, 
to cycles of even order. 

\begin{Theorem}                                                                    \label{thm:thickenedCn}
Let $X$ be the cycle on $n$ vertices, where $n$ is even and $n > 2$, 
and let $F$ be a $1$-factor of $X$. 
Then $X(F,m)$ is vertex-transitive for all $m \ge 2$, with arc-type $m+1$ whenever $(n,m) \ne (4,2)$. 
Moreover, $X(F,m)$ is prime with respect to the Cartesian product, whenever $m \ge 2$ and $n \ne 4$.
\end{Theorem}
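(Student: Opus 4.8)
The plan is to verify the three assertions in turn: vertex-transitivity, the value of the arc-type, and primality. Vertex-transitivity is immediate from Corollary \ref{cor:thcovervt}, since $X = C_n$ is vertex-transitive and a $1$-factor $F$ of an even cycle is a single edge-orbit under $\Aut(C_n) = D_n$ (the `diagonal' edges of the cycle), hence a union of edge-orbits. So the first clause needs almost nothing. For the arc-type, I would compute the orbits of the vertex-stabiliser $G_v$ on the neighbourhood of a vertex $v = (u,0)$ in $X(F,m)$, using the observation from Section \ref{sec:arc-types} that the summands of the arc-type are exactly the sizes of these orbits. A vertex $(u,0)$ has valency $m+1$: it is joined to the two `$F$-neighbours' $(u',0)$ and $(u'',0)$ along the two non-$F$ edges of $C_n$ at $u$ via type-(a) edges — wait, more carefully: each vertex of $C_n$ is incident with exactly one edge of $F$ and one edge not in $F$ (since $F$ is a perfect matching of the cycle), so $(u,0)$ is joined to $m$ vertices $(w,j)$, $j \in \bZ_m$, across the $F$-edge $\{u,w\}$ (a $K_{m,m}$ fibre), and to exactly one vertex $(u',0)$ across the single non-$F$ edge $\{u,u'\}$. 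The claim is that the $m$ vertices in the $K_{m,m}$ part form one orbit of size $m$ and the single vertex $(u',0)$ forms an orbit of size $1$, giving arc-type $m+1$.

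To establish that, I would exhibit enough automorphisms fixing $(u,0)$: the automorphisms $\tilde\theta(u,w,i,j)$ from Lemma \ref{lemma:theta} — valid precisely because $u$ and $w$ lie in different components of $C_n \setminus F$ (deleting a perfect matching from an even cycle leaves $n/2$ disjoint edges, so $u$ and its $F$-partner $w$ are in different components) — fix $(u,k)$ for all $k$ and act on the fibre over $w$ by swapping coordinates $i$ and $j$; together these generate the full symmetric group on $\{(w,j) : j \in \bZ_m\}$, so the $K_{m,m}$-neighbours form a single $G_v$-orbit of size $m$. It remains to check this orbit is \emph{self-paired} (so it contributes the unbracketed summand $m$, not a bracketed pair): this holds because the edge $\{u,w\} \in F$ is fixed setwise by an automorphism of $C_n$ reversing the cycle and fixing $u$ — no, $D_n$ need not fix $u$; instead I would use that the edge-fibre over $\{u,w\}$ is a $K_{m,m}$, which is arc-transitive, and combine a reflection of $C_n$ swapping $u$ and $w$ (such a reflection exists since $\{u,w\}$ is an edge) lifted via $\tilde\psi$ to swap the two sides of the $K_{m,m}$. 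For the singleton orbit $\{(u',0)\}$: it is self-paired iff the non-$F$ edge $\{u,u'\}$ admits a lifted automorphism swapping its endpoints, again from a reflection of $C_n$ through the midpoint of $\{u,u'\}$; such a reflection fixes $F$ setwise (it permutes the matching edges) so lifts by Lemma \ref{lemma:psi}. Finally I must rule out these two orbits merging: the $K_{m,m}$-neighbours $(w,j)$ have no common neighbour with $(u,0)$ other than … in fact $(u',0)$ and $(w,0)$ both neighbour $(u,0)$, but a $G_v$-element permuting them would have to carry the non-$F$ edge $\{u,u'\}$ to the $F$-edge $\{u,w\}$, impossible since $F$ is an edge-orbit invariant; this is where the hypothesis $(n,m) \ne (4,2)$ enters — when $n = 4$ and $m = 2$, the graph $C_4(F,2) \cong K_{4,4}$ (or becomes arc-transitive) and the two orbits coincide, so that case is genuinely exceptional and must be excluded by hand.

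For primality, I would invoke Corollary \ref{cor:notCartesian}: it suffices to find an edge of $X(F,m)$ lying in no $4$-cycle, or to analyse $4$-cycles directly via Lemma \ref{lem:notCartesian1}. The natural candidate is a non-$F$ edge, say $\{(u,0),(u',0)\}$: I would argue that when $n \ne 4$ no $4$-cycle contains such an edge, because a $4$-cycle through $\{(u,0),(u',0)\}$ would project to a closed walk of length at most $4$ in $C_n$ using the edge $\{u,u'\}$, forcing either $n = 3$ (impossible, $n$ even) or $n = 4$, or a `degenerate' square within one fibre — but within the $mK_2$ fibre over a non-$F$ edge there are no multiple edges, so no such square exists. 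Hence for $n \ne 4$ every non-$F$ edge lies in no $4$-cycle, so $X(F,m)$ is prime. For $n = 4$ the non-$F$ edges \emph{do} lie in $4$-cycles (the lifted $4$-cycle of $C_4$), so this argument fails, consistent with the stated hypothesis; one expects $C_4(F,m)$ to actually decompose as $K_{m,m} \cp K_2$ and so be excluded. The main obstacle I anticipate is the bookkeeping in the arc-type computation — specifically, cleanly proving the two neighbourhood-orbits are self-paired and do not merge, and pinning down exactly why $(n,m) = (4,2)$ is the unique exception (as opposed to all $n = 4$, or some interaction at small $m$); I would double-check the small case $C_4(F,2) \cong K_{4,4}$ explicitly to calibrate the statement.
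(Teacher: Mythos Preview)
Your proposal has two genuine gaps.

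First, your claim that a $1$-factor $F$ of $C_n$ is ``a single edge-orbit under $\Aut(C_n) = D_n$'' is false: the dihedral group of order $2n$ acts \emph{edge-transitively} on $C_n$, so the only edge-orbits are $\emptyset$ and $E(C_n)$. In particular you cannot invoke Corollary~\ref{cor:thcovervt} or Lemma~\ref{lemma:psi} for arbitrary $\psi \in \Aut(C_n)$. The paper handles this by working instead with the index-$2$ dihedral subgroup of order $n$ generated by the rotation $u \mapsto u+2$ and the reflection $u \mapsto 1-u$; this subgroup \emph{does} preserve $F$, acts vertex-transitively on $C_n$, and lifts to $X(F,m)$. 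Your later use of specific reflections (through midpoints of edges) is actually fine, since those particular reflections happen to preserve $F$, but you must say so rather than appeal to the full $\Aut(C_n)$.

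Second, and more seriously, your argument that the two neighbourhood-orbits cannot merge is circular. You write that an automorphism carrying the non-$F$ edge $\{u,u'\}$ to the $F$-edge $\{u,w\}$ is ``impossible since $F$ is an edge-orbit invariant'' --- but invariant under what? You have only exhibited certain automorphisms $\tilde\varphi$, $\tilde\psi$, $\tilde\theta$, all of which preserve the $F$/non-$F$ distinction by construction; you have not shown that the \emph{full} group $\Aut(X(F,m))$ preserves it, and indeed when $(n,m)=(4,2)$ it does not. This is precisely the step where the exception arises, and it needs a genuine combinatorial invariant. The paper supplies one: count the quadrangles through a given edge. An $F$-type edge lies in at least $(m-1)^2$ quadrangles (inside its $K_{m,m}$ fibre), while a non-$F$-type edge lies in $0$ quadrangles if $n>4$ and exactly $m$ quadrangles if $n=4$; since $(m-1)^2 > m$ for $m>2$, the two counts differ unless $(n,m)=(4,2)$. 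This pins down the exception exactly.

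A minor correction: $C_4(F,2)$ has $8$ vertices and valency $3$, so it is not $K_{4,4}$; it is the $3$-cube $Q_3$ (equivalently $K_{2,2}\,\square\,K_2$), which is arc-transitive. Your primality argument via Corollary~\ref{cor:notCartesian} is essentially the paper's.
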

\begin{proof}
We may take $V(X) = \bZ_n$ and $E(X)=\{\{r,r\!+\!1 \} : \, r \in \bZ_n\}$, 
and assume without loss of generality that $F=\{\{2r ,2r\!+\!1 \} : \, r \in \bZ_n\}$.
By Lemma \ref{lemma:varphi} and Lemma \ref{lemma:theta}, we know that 
$\tilde{\varphi}$ and $\tilde{\theta}(2r,2r+1,i,j)$ are automorphisms of $X(F,m)$, 
for all $r \in \bZ_n$ and all $i,j \in \bZ_m$. 
We may also define bijections $\tilde{\rho}$  and $\tilde{\tau}$ on the vertex-set of $X(F,m)$
by 
\begin{align*}                                        
\tilde{\rho} : (u,i) \mapsto (u+2,i) 
\  \mbox{ and } \ 
\tilde{\tau}  &: (u,i) \mapsto (1-u,i) 
\quad \mbox{for all} \ u \in V(X) \ \mbox{and all} \ i \in \bZ_m.
\end{align*}
It is easy to verify that $\tilde{\rho}$ and $\tilde{\tau}$ are automorphisms of $X(F,m)$.
In fact $\tilde{\rho}$ and $\tilde{\tau}$ generate a dihedral subgroup 
of $\Aut(X(F,m))$ of order $n$ (with $n/2$ `rotations' and $n/2$ `reflections'), 
and this subgroup acts transitively on the vertices of the $i$-th layer $\{(u,i) : u \in \bZ_n\}$ of $X(F,m)$, 
for every $i \in \bZ_m$.   
It follows that the subgroup of $\Aut(X(F,m))$ generated by the 
automorphisms $\tilde{\varphi}$, $\tilde{\rho}$ and $\tilde{\tau}$ acts transitively 
on the set of all vertices of $X(F,m)$, and therefore $X(F,m)$ is vertex-transitive. 

Now let $\Delta_1$ and $\Delta_2$ be the sets of arcs associated with edges of 
the types (a) and (b) from the construction of $X(F,m)$. 
Specifically, let $\Delta_1$ be the set of arcs associated with edges of the form 
$\{(2r+1,i),(2r+2,i)\}$ for $r \in \bZ_n$ and $i \in \bZ_m$, 
and let $\Delta_2$ be the set of arcs associated with edges of the form 
$\{(2r,i),(2r+1,j)\}$ for $r \in \bZ_n$ and $i,j \in \bZ_m$. 
Note that by the thickening construction, every vertex of $X(F,m)$ is incident with 
one arc from $\Delta_1$, and with $m$ arcs from $\Delta_2$. 

All the arcs in $\Delta_1$ lie in the same orbit of $\Aut(X(F,m))$. 
In fact $\Delta_1$ is an arc-orbit of the subgroup generated 
by $\tilde{\varphi}$, $\tilde{\rho}$ and $\tilde{\tau}$, 
and here we may note that $\tilde{\tau}\tilde{\rho}$ reverses each arc $((1,i),(2,i))$, 
and so $\tilde{\rho}^{\,-r}(\tilde{\tau}\tilde{\rho})\tilde{\rho}^{\,r}$ reverses 
each arc $((2r+1,i),(2r+2,i))$ in $\Delta_1$.
Similarly, all the arcs associated with edges of the form $\{(2r,i),(2r+1,i)\}$
for $r \in \bZ_n$ and $i \in \bZ_m$ lie in the same orbit of the subgroup 
generated by $\tilde{\varphi}$, $\tilde{\rho}$ and $\tilde{\tau}$, 
and then since $\tilde{\theta}(2r,2r+1,i,j)$ interchanges the vertices $(2r,i)$ and $(2r,j)$ 
while fixing $(2r+1,i)$ and $(2r+1,j)$, 
we find that all the arcs in $\Delta_2$ lie in the same orbit of $\Aut(X(F,m))$. 

Next, we show there is no automorphism taking an arc in $\Delta_1$ to an 
arc in $\Delta_2$, unless $(n,m) = (4,2)$.  To see this, we consider the number 
of quadrangles containing a given edge.  
Every edge of the form $\{(2r,i),(2r+1,j)\}$ is contained in at least $(m-1)^2$ quadrangles, 
namely those with vertices $(2r,i)$, $(2r+1,j)$, $(2r,k)$ and $(2r+1,\ell)$, 
for given $k \in \bZ_m \setminus \{i\}$ and $\ell \in \bZ_m \setminus \{j\}$. 
On the other hand, if $n > 4$ then no edge of the form $\{(2r+1,i),(2r+2,i)\}$ is contained in a quadrangle, 
because the other neighbours of $(2r+1,i)$ and $2r+2,i)$ are all of the form $(2r,j)$ 
and $(2r+3,j)$ respectively, and no two of these are adjacent, 
while if $n = 4$, then every edge of the form $\{(2r+1,i),(2r+2,i)\}$ is contained in 
exactly $m$ quadrangles, namely those with vertices $(2r+1,i)$, $(2r+2,i)$, $(2r+3,j)$ and $(2r,j)$, 
for given $j \in \bZ_m$.  
Since $(m-1)^2 > m$ for all $m > 2$, the numbers of quadrangles are different 
when $(n,m) \ne (4,2)$. 

Hence if $(n,m) \ne (4,2)$, we find that $\Delta_1$ and $\Delta_2$ are arc-orbits of $X(F,m)$. 
Then since every vertex is incident with one arc from $\Delta_1$ 
and $m$ arcs from $\Delta_2$, the arc-type of $X(F,m)$ is $m+1$ in this case.
Finally, if $n > 4$, then by Corollary  \ref{cor:notCartesian}, 
the fact that not every edge of $X(F,m)$ is contained in a quadrangle 
implies that $X(F,m)$ is prime with respect to the Cartesian product. 
\end{proof}

In the exceptional case $(n,m) = (4,2)$, the graph $C_4(F,2)$ is isomorphic to the $3$-cube $Q_3$, 
which is arc-transitive (with arc-type $3$).  Also for every $m \ge 2$, the graph $C_4(F,m)$ is 
isomorphic to the Cartesian product of $K_{m,m}$ and $K_2$, and hence is not prime. 

Thickened covers of cycles belong to the family of cyclic Haar graphs \cite{Haar},
which are regular covering graphs over a dipole. 
Indeed the graph $C_{2k}(F,m)$ we considered in Theorem~\ref{thm:thickenedCn}
is a covering graph over a dipole with $n+1$ edges, and voltage 
assignments $1,0,m,2m,\dots,(n-1)m$ from the additive group $\bZ_{mn}$.

\smallskip
Next, we prove the following, which will be very helpful later.  

\begin{Theorem}                                                \label{thm:samearcorbit}
Let $X$ be a vertex-transitive graph, let $F$ be union of edge-orbits 
of $X$, and let $m$ be any integer with $m \ge 2$. 
Also suppose that for every edge $\{u,v\} \in F$, the vertices $u$ and $v$ lie in different 
components of $X \setminus F$, and let $(x,y)$ and $(z,w)$ be two arcs lying in the 
same arc-orbit of $X$. Then also \\[-17pt] 
\begin{itemize}
\item[{\rm (a)}] the arcs $((x,i),(y,i))$ and $((z,j),(w,j))$  lie in the same arc-orbit of $X(F,m)$
  for all $i,j,\in \bZ_m$, 
  and \\[-20pt] 
\item[{\rm (b)}] the arcs $((x,i),(y,j))$ and $((z,k),(w,\ell))$ lie in the same arc-orbit of $X(F,m)$ 
  for all $i,j,k,\ell \in \bZ_m$, when $\{x,y\} \in F$. \\[-15pt] 
\end{itemize}
\end{Theorem}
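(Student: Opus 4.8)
The strategy is to transport a witnessing automorphism of $X$ up to $X(F,m)$ and then correct the second coordinates using the automorphisms $\tilde\varphi$ and $\tilde\theta$ already constructed in this section. Suppose $(x,y)$ and $(z,w)$ lie in the same arc-orbit of $X$, so there is $\sigma\in\Aut(X)$ with $x^\sigma=z$ and $y^\sigma=w$. Since $F$ is a union of edge-orbits of $X$, Lemma~\ref{lemma:psi} tells us that $\tilde\sigma$ is an automorphism of $X(F,m)$, and by its definition $\tilde\sigma$ sends $((x,i),(y,i))$ to $((z,i),(w,i))$ for every $i\in\bZ_m$, and sends $((x,i),(y,j))$ to $((z,i),(w,j))$ for all $i,j$. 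So in both parts it suffices to reduce to the case $z=x$, $w=y$, i.e.\ to show that the second coordinates can be shifted arbitrarily within the fibres while keeping the first coordinates fixed.

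For part (a), the arc $((x,i),(y,i))$ and the arc $((x,j),(y,j))$ differ only by adding $j-i$ to both second coordinates, and $\tilde\varphi^{\,j-i}$ (an automorphism of $X(F,m)$ by Lemma~\ref{lemma:varphi}) accomplishes exactly this. Composing with $\tilde\sigma$ as above gives an automorphism of $X(F,m)$ carrying $((x,i),(y,i))$ to $((z,j),(w,j))$, which is what part (a) asserts. For part (b), where $\{x,y\}\in F$, I first use $\tilde\varphi^{\,k-i}$ to move $((x,i),(y,j))$ to $((x,k),(y,j+k-i))$, so it remains to adjust only the second coordinate of the endpoint $y$, from $j+k-i$ to $\ell$, without disturbing $(x,k)$. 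Here is where the hypothesis that $x$ and $y$ lie in different components of $X\setminus F$ is essential: it lets me invoke Lemma~\ref{lemma:theta} to form $\tilde\theta(x,y,j+k-i,\ell)$, an automorphism of $X(F,m)$, which by construction fixes $(x,k)$ (since $(x,k)$ is in the fibre over $x$, which lies in the same component of $X\setminus F$ as $x$, not $y$) and interchanges the layers $j+k-i$ and $\ell$ on the component containing $y$, so in particular it sends $(y,j+k-i)$ to $(y,\ell)$. This yields an automorphism carrying $((x,i),(y,j))$ to $((x,k),(y,\ell))$, and precomposing or postcomposing with the lift $\tilde\sigma$ from the first paragraph completes part (b).

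The one point requiring care is checking that $\tilde\theta(x,y,j+k-i,\ell)$ really does fix $(x,k)$ and really does move $(y,j+k-i)$ to $(y,\ell)$; both follow directly from the case analysis in the definition \eqref{eq:theta} of $\tilde\theta$, using that $x$ lies in the component of $X\setminus F$ containing $x$ (trivially) while $y$ lies in a different one. When $j+k-i=\ell$ one simply omits this last step. I expect no genuine obstacle here: the whole argument is an assembly of the three families of automorphisms $\tilde\varphi$, $\tilde\psi$, $\tilde\theta$ introduced in this section, and the only subtlety is bookkeeping of second-coordinate indices, together with the observation that the ``different components of $X\setminus F$'' hypothesis is exactly what is needed to apply Lemma~\ref{lemma:theta}.
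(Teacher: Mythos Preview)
Your proposal is correct and follows essentially the same approach as the paper: both proofs assemble the automorphisms $\tilde\varphi$, $\tilde\psi$ (your $\tilde\sigma$), and $\tilde\theta$ in the obvious way to transport arcs. The only cosmetic difference is that in part~(b) the paper applies $\tilde\theta$ on the $(z,w)$ side after lifting via $\tilde\psi$ (noting that $\{z,w\}\in F$ since $F$ is a union of edge-orbits), whereas you apply $\tilde\theta$ on the $(x,y)$ side first and then lift; your version also tracks the general starting index $j$ a bit more explicitly, but the substance is identical.
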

\begin{proof}
First, there exists an automorphism $\psi$ that maps  $(x,y)$ to $(z,w)$.
Now let $\tilde{\varphi}$ and $\tilde{\psi}$ be the mappings defined earlier 
in this section in \eqref{eq:varphi} and \eqref{eq:psi}.
These are automorphisms, by Lemma \ref{lemma:varphi} and Lemma \ref{lemma:psi}, 
and $\tilde{\varphi}^{\,j-i\,}\tilde{\psi}$ takes the arc $((x,i),(y,i))$ to $((z,j),(w,j))$, 
and so these two arcs lie in the same orbit of  $\Aut(X(F,m))$.

Next, suppose $\{x,y\} \in F$.  
Then also $\{z,w\} \in F$, since $\{x,y\}$ and $\{z,w\}$ are in the same edge-orbit, 
and the vertices $z$ and $w$ lie in different components of $X \setminus F$, by hypothesis.
Note that the automorphism $\tilde{\varphi}^{k-i}\tilde{\psi}$ takes $((x,i),(y,i))$ 
to $((z,k),(w,k))$,
Now let $\tilde{\theta} = \tilde{\theta}(z,w,k,\ell)$, as defined in \eqref{eq:theta}. 
This is an automorphism of $X(F,m)$, by Lemma \ref{lemma:theta}, 
and takes $((z,k),(w,k))$ to $((z,k),(w,\ell))$. 
Thus $\tilde{\varphi}^{k-i}\tilde{\psi}\tilde{\theta}$ takes $((x,i),(y,i))$  to $((z,k),(w,\ell))$, 
and so these two arcs lie in the same of  $\Aut(X(F,m))$.
\end{proof}

Note that Theorem \ref{thm:samearcorbit} cannot be pushed much further. 
For example, if $F$ is a $1$-factor in $X = C_6$,  then the graph 
$Y=C_6(F,2)$ has arc-type $2+1$, by Theorem \ref{thm:thickenedCn}. 
Now one might expect that if $\Phi_1$ is the smaller edge-orbit of $Y,$ 
then the $4$-valent graph $Y(\Phi_1,2)$ has arc-type $2+2$, 
but that does not happen: it turns out that $Y(\Phi_1,2)$ is arc-transitive, 
so has arc-type $4$.

\section{Building blocks}
\label{sec:building-blocks}

In this section we produce families of examples (and a few single examples) 
of vertex-transitive graphs with certain arc-types, which we will use as building blocks 
for the Cartesian product construction, to prove our main theorem in the final section.  
The marked partitions that occur as arc-types in these cases have a small number of summands. 
We begin with the arc-transitive case, for which there is just one summand. 

\begin{Lemma}             \label{lemma:typem}
For every integer $m \ge 2$, there exist infinitely many VT graphs 
that have arc-type $m$ and are prime with respect to the Cartesian product.
\end{Lemma}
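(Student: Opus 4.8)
The plan is to produce, for each $m \ge 2$, an explicit infinite family of arc-transitive $m$-valent graphs that additionally fail to be Cartesian products. The natural candidates are \emph{complete bipartite graphs} $K_{m,m}$, which are arc-transitive of valency $m$, but these are obviously not prime (e.g. $K_{2,2} = C_4 = K_2 \cp K_2$, and in general $K_{m,m}$ contains $4$-cycles through every edge). So instead I would use a family of arc-transitive $m$-valent graphs of \emph{girth at least $5$}, since by Corollary~\ref{cor:notCartesian} any graph with no $4$-cycles is automatically prime. For $m \ge 3$ there is a well-known abundance of such graphs; for instance, one could invoke the incidence graphs of generalised polygons, or — more uniformly — the fact that for every $k \ge 3$ there exist infinitely many arc-transitive (indeed distance-transitive, or at least vertex- and arc-transitive) graphs of valency $k$ and arbitrarily large girth, e.g. from the Ramanujan graph constructions of Lubotzky--Phillips--Sarnak or from Cayley graphs of $\PSL(2,q)$ with suitable generating sets.

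The cleanest approach, and the one I would lead with, is the following. First, for the base case of \emph{prime} arc-transitive graphs of each valency: for $m \ge 3$, take the family of graphs $\Cay(\PSL(2,p), S)$ where $S$ is an appropriate symmetric generating set of size $m$ chosen so that the Cayley graph is arc-transitive and has girth $\ge 5$; as $p$ ranges over an infinite set of primes this gives infinitely many examples, and girth $\ge 5$ gives primeness by Corollary~\ref{cor:notCartesian}. Alternatively — and this avoids any delicate group theory — one can note that the \emph{odd graphs} $O_k$ (Kneser graphs $K(2k-1,k-1)$) are distance-transitive of valency $k$ with girth $\ge 5$ for $k \ge 3$, and then build an infinite family from a single prime arc-transitive example per valency by a girth-preserving cover, or simply by observing that there are known infinite families of arc-transitive graphs of each fixed valency $\ge 3$ with girth $\ge 5$ (e.g. arc-transitive graphs of girth $6$ coming from generalised hexagons, or the Biggs--Smith-type constructions). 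For $m = 2$ the situation is degenerate: the only connected $2$-valent graphs are cycles $C_n$, which are arc-transitive with arc-type $2$, and $C_n$ for $n \ne 4$ has no $4$-cycle and hence is prime; so the cycles $C_n$ with $n \ge 3$, $n \ne 4$ already give infinitely many prime VT graphs of arc-type $2$.

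To make the write-up self-contained rather than citing the existence of arc-transitive graphs of arbitrary girth, I would actually prefer to give a concrete construction valid for all $m \ge 2$ simultaneously. Here is one: let $q$ be a prime power with $q \equiv 1 \pmod 4$ and consider the \emph{Paley-type} or, better, take the incidence graph $X_q$ of the projective plane $\mathrm{PG}(2,q)$ when $m - 1 = q$; this is a bipartite arc-transitive graph of valency $q+1 = m$ and girth exactly $6$, hence prime, and distinct primes $q$ give distinct graphs. This handles every $m$ of the form $q + 1$; for the remaining valencies I would instead use the flag graphs / point graphs of generalised quadrangles (valency $q+1$ again but with a different relation) or simply fall back on the general statement that for every $m \ge 3$ infinitely many arc-transitive $m$-valent graphs of girth $\ge 5$ exist (a standard fact, e.g.\ via voltage-graph covers of a single such base graph using a faithful action, which preserves arc-transitivity and girth). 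Concretely: start from \emph{any} arc-transitive $m$-valent graph $X_0$ of girth $\ge 5$ (one exists for each $m \ge 3$, e.g.\ a suitable Cayley graph), and take regular abelian covers of $X_0$ with voltages chosen generically; lifting the arc-transitive group and choosing the covering group large enough that no new short cycles appear gives infinitely many arc-transitive $m$-valent graphs of girth $\ge 5$, each prime by Corollary~\ref{cor:notCartesian}.

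The main obstacle I anticipate is entirely in the \emph{uniformity over all $m$} and in keeping the primeness argument clean: arc-transitivity of the individual graphs is standard, and primeness is handed to us for free by Corollary~\ref{cor:notCartesian} the moment we arrange girth $\ge 5$, so the real work is exhibiting, \emph{for every} $m \ge 2$, an infinite supply of arc-transitive $m$-valent graphs with girth $\ge 5$ (or at least with some edge lying in no $4$-cycle). I expect the authors handle this by an explicit family — perhaps cages, incidence graphs of generalised polygons, or Cayley graphs on $\PSL(2,q)$ — together with the elementary covering-graph observation that one can inflate a single example into infinitely many while controlling girth. In the write-up I would therefore: (1) recall Corollary~\ref{cor:notCartesian} to reduce ``prime'' to ``girth $\ge 5$'' (equivalently, ``some edge in no $4$-cycle''); (2) for $m = 2$, invoke the cycles $C_n$, $n \ge 5$; (3) for $m \ge 3$, exhibit one concrete arc-transitive $m$-valent graph of girth $\ge 5$ and then apply a girth-preserving, arc-transitivity-preserving covering construction to obtain infinitely many.
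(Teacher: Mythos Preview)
Your proposal is correct in outline and, interestingly, aligns with the \emph{alternative} construction the paper mentions in a remark after its main proof. The paper's primary argument, however, is different in two ways. First, rather than building one base example per valency and then passing to covers, the paper invokes Macbeath's theorem to get, for each $m \ge 3$, infinitely many primes $p$ for which $\PSL(2,p)$ is a $(2,m,m+4)$-group, and then takes the double coset graph $\Gamma(\PSL(2,p),\langle y\rangle,x)$; this produces the infinite family in one stroke, with no covering step. Second, and more notably, the paper does \emph{not} establish primeness via girth or Corollary~\ref{cor:notCartesian}. Instead it argues structurally: if $\Gamma$ had two relatively prime Cartesian factors, Theorem~\ref{thm:Cartesian} would force the single summand $m$ in the arc-type to split, which is impossible; hence all prime factors are isomorphic and $|V(\Gamma)|$ would be a perfect power, contradicting the fact that $p$ divides $|V(\Gamma)|=p(p^2-1)/(2m)$ exactly once. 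This divisibility trick is robust and sidesteps any girth computation for the coset graphs (whose girth is not obviously controlled).

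Your route buys conceptual simplicity (primeness is immediate once girth $\ge 5$) at the cost of either citing a nontrivial existence result or carrying out the cover construction carefully; the paper's route buys a single uniform construction with an elegant primeness argument at the cost of invoking Macbeath. One caution on your write-up: your proposal drifts among several candidate families (incidence graphs of $\mathrm{PG}(2,q)$, odd graphs, Ramanujan graphs, generic covers), some of which only cover special valencies. If you pursue your approach, commit to a single line that works for every $m\ge 3$ --- the cleanest being exactly what the paper's remark suggests: take $K_{m+1}$ (or any $m$-valent arc-transitive graph) and pass to homological $p$-covers for large primes $p$, which are arc-transitive and have no $4$-cycles, hence prime by Corollary~\ref{cor:notCartesian}. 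For $m=2$ your treatment via cycles $C_n$, $n\ge 5$, matches the paper exactly.
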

\begin{proof}
First, when $m = 2$ we can take the cycle graphs $C_n$, for $n \ge 5$. 
These are vertex-transitive, with arc-type $2$, and taking $n > 4$ 
ensures that $C_n$ contains no 4-cycles and is therefore prime, 
by Lemma \ref{lem:notCartesian}. 

Now suppose $m \ge 3$. We construct infinitely many $m$-valent 
arc-transitive graphs, using a theorem of Macbeath \cite{Macbeath}
that tells us that for almost all positive integer triples $(m_1,m_2,m_3)$ 
with $1/m_1+1/m_2+1/m_3 < 1$, there exist infinitely many odd primes $p$ for which 
the simple group ${\rm PSL}(2,p)$ is generated by two elements $x$ and $y$ such 
that $x$, $y$ and $xy$ have orders $m_1$, $m_2$ and $m_3$, respectively.  
Here we can take $(m_1,m_2,m_3) = (2,m,m+4)$, and then for each such prime $p > m$,  
take $G = {\rm PSL}(2,p)$ and let $H$ be the cyclic subgroup of $G$ generated by $y$. 
Then $|H| = m$, and the double coset graph $\Gamma = \Gamma(G,H,x)$ 
is an arc-transitive graph of order $|G|/m = p(p-1)(p+1)/(2m)$.
This graph has valency $m$, because the stabiliser in $G$ of the arc $(H,Hx)$ 
is the cyclic subgroup $H \cap \,x^{-1}Hx$, which is trivial, since $G$ is simple. 
%
Thus $\Gamma$ has arc-type $m$. 

It remains to show that $\Gamma$ is prime.  
For the moment, suppose that $\Gamma \cong X \cp Y$ where $X$ and $Y$ are relatively prime 
non-trivial graphs.  Then by Corollary \ref{cor:producttransitive}, $X$ and $Y$ are 
vertex-transitive, and then by Theorem \ref{thm:Cartesian} we 
have $m = {\rm at}(\Gamma) = {\rm at}(X)+ {\rm at}(Y)$, which is impossible. 
Hence the prime factors of $\Gamma$ must be all the same, and so $\Gamma$ a Cartesian 
product of (say) $k$ copies of a single prime graph $X$.  But then the order of $\Gamma$ 
is $|V(X)|^k$, which is impossible unless $k = 1$, 
since the prime $p$ divides $|V(\Gamma)| = p(p-1)(p+1)/(2m)$ 
but $p^2$ does not.  Hence $\Gamma$ itself is prime. 
%
\end{proof}

At this point we remark that there are several other ways to produce infinitely many $m$-valent 
arc-transitive graphs for all $m \ge 3$.
For example, another construction uses homological covers: start with 
a given $m$-valent arc-transitive graph $X$ (such as the complete graph on $m+1$ vertices), 
and then for every sufficiently large prime $p$, construct a homological $p$-cover $\Gamma_p$ over $X$ 
with no $4$-cycles.  Then $\Gamma_p$ is also an $m$-valent arc-transitive graph, 
and is prime since it contains no 4-cycles;
see \cite{MMPP}. 

\smallskip
Next, we consider half-arc-transitive graphs. 

\begin{Lemma}                \label{lemma:type(mm)}
For every integer $m \ge 2$, there exist infinitely many VT graphs 
that have arc-type $(m+m)$ and are prime with respect to the Cartesian product.

\end{Lemma}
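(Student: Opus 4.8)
The goal is to exhibit, for each $m \ge 2$, an infinite family of vertex-transitive graphs with arc-type $(m+m)$ that are prime with respect to the Cartesian product. The natural strategy is to mimic the structure of the preceding two lemmas: produce a concrete infinite family of half-arc-transitive graphs of valency $2m$, and then argue primality. For the valency-$4$ base case ($m=2$), the established source of infinitely many half-arc-transitive quartic graphs is the family constructed by Bouwer and revisited by the first and third authors in \cite{ConderZitnik} (cited in the introduction); for general even valency $2m$, Bouwer's original construction \cite{Bouwer} gives one half-arc-transitive graph of each even valency $>2$, and \cite{ConderZitnik} upgrades this to infinitely many of each such valency. So the plan is: first invoke those results to obtain, for each $m \ge 2$, infinitely many half-arc-transitive graphs $Y$ of valency $2m$. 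By the discussion in Section~\ref{sec:arc-types}, any half-arc-transitive graph of valency $2m$ has arc-type $(m+m)$, which settles the arc-type claim immediately.

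The remaining work is to ensure primality with respect to the Cartesian product for infinitely many members of each family. Here I would use Corollary~\ref{cor:notCartesian}: if some edge of the graph lies in no $4$-cycle, the graph is prime; in particular a graph with no $4$-cycles at all is prime. The cleanest route is a girth argument: show that the relevant family contains infinitely many graphs of girth at least $5$. If the Bouwer-type graphs do not have girth $\ge 5$ directly, one can instead pass to covers — exactly as in the remark following Lemma~\ref{lemma:typem}, one takes a fixed half-arc-transitive graph $Y$ of valency $2m$ and forms homological $p$-covers $\Gamma_p$ for large primes $p$, chosen so that $\Gamma_p$ has no $4$-cycles; since covering preserves vertex- and edge-transitivity but (with care in the voltage assignment) can be arranged to destroy arc-transitivity, each $\Gamma_p$ remains half-arc-transitive, has arc-type $(m+m)$, and is prime by Corollary~\ref{cor:notCartesian}. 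Alternatively, one can run the same double-coset/order-divisibility argument used at the end of Lemma~\ref{lemma:typem}: if a graph $\Gamma$ of arc-type $(m+m)$ were a Cartesian product of relatively prime non-trivial factors, Theorem~\ref{thm:Cartesian} would force $(m+m)$ to split as a sum of two arc-types of smaller graphs, and one checks this is impossible for a marked partition consisting of a single bracketed pair; and if $\Gamma$ were a power of a single prime graph, an order-divisibility obstruction (choosing the family so that $|V(\Gamma)|$ is not a proper perfect power, e.g.\ divisible by a prime to the first power only) rules that out too.

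The main obstacle, I expect, is the primality step rather than the arc-type step. Getting the arc-type is essentially a definitional consequence of half-arc-transitivity. But one must either locate a citable family of half-arc-transitive graphs of each even valency with controlled girth (or with a controlled order-factorisation), or build such a family by covering constructions and verify that the covers stay half-arc-transitive — the subtle point being that lifting automorphisms through a voltage cover can accidentally create new automorphisms that reverse edges and make the cover arc-transitive. I would handle this by appealing to \cite{ConderZitnik}, which already produces infinitely many half-arc-transitive graphs of each even valency $\ge 4$, and then among those select the infinitely many whose structure (girth, or a prime-to-the-first-power dividing the order) gives primality via Corollary~\ref{cor:notCartesian} or the Theorem~\ref{thm:Cartesian} splitting argument. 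If no single citation delivers both the family and the structural control simultaneously, the fallback is the homological-cover construction sketched above, which decouples the two issues: use \cite{Bouwer} (or \cite{ConderZitnik}) for one base graph of valency $2m$ and known techniques for girth-increasing covers for the infinitude and primality.
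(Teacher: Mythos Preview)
Your primary approach matches the paper's exactly: invoke the Bouwer family and the result of \cite{ConderZitnik} to obtain infinitely many half-arc-transitive graphs of each even valency $2m \ge 4$, note that half-arc-transitivity immediately gives arc-type $(m+m)$, and then use Corollary~\ref{cor:notCartesian} (no $4$-cycles) for primality.

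The only difference is that you hedge unnecessarily. The key fact you are missing is that \cite{ConderZitnik} proves the graphs $B(m,k,n)$ have girth exactly $6$ whenever $n>7$ and $k>6$ (with $2^k \equiv 1 \pmod n$). So the ``cleanest route'' you describe is already available off the shelf: these graphs have no $4$-cycles, and Corollary~\ref{cor:notCartesian} finishes the primality argument in one line. Your fallback via homological covers is therefore not needed, and you correctly flag its genuine danger (a cover of a half-arc-transitive graph can become arc-transitive), which would require extra work to rule out. Your other fallback via Theorem~\ref{thm:Cartesian} is sound for the relatively-prime-factors case (a single bracketed pair $(m+m)$ cannot split nontrivially), but the prime-power case still needs an arithmetic obstruction on $|V(\Gamma)|$; again, the girth-$6$ fact makes all of this moot.
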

\begin{proof}
In 1970, Bouwer \cite{Bouwer} constructed an infinite family of vertex- and edge-transitive 
graphs of even valency, indexed by triples $(m,k,n)$ of integers such that $m,k,n \ge 2$ 
and $2^k \equiv 1$ mod $n$.  Each such graph, which we will call $B(m,k,n)$, 
has order $kn^{m-1}$ and valency $2m$.  The construction is easy, using only modular arithmetic. 
Bouwer proved that the graphs $B(m,6,9)$ are half-arc-transitive, 
thereby showing that for every even integer $2m > 2$, there exists a half-arc-transitive 
graph with valency $2m$.
Recently the first and third authors of this paper adapted Bouwer's approach to 
prove that almost all the graphs $B(m,k,n)$ are half-arc-transitive \cite{ConderZitnik}.
In particular, they showed that if $n>7$ and $k>6$ (and $2^k \equiv 1$ mod $n$),
then $X(m,k,n)$ is a half-arc-transitive graph of girth $6$, for every $m \ge 2$. 
This gives infinitely many prime graphs of type $(m+m)$, for every $m \ge 2$.
%
\end{proof}

Here we note that there are several constructions for half-arc transitive graphs.
In particular, Li and Sim used properties of projective special linear groups 
to construct infinitely many half-arc transitive graphs of every even valency 
greater than $2$ in \cite{LiSim}.
A census of 4-valent half-arc transitive graphs up to 1000 vertices is given in \cite{PSVhat}.

\begin{Lemma}                \label{lemma:typem1}
For every integer $m \ge 2$ there exist infinitely many prime VT graphs with arc-type $m+1$. 
\end{Lemma}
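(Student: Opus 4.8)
The plan is to produce, for each $m \ge 2$, infinitely many vertex-transitive graphs of arc-type $m+1$ as thickened $m$-covers of suitable cubic graphs, invoking Theorem~\ref{thm:thickenedCn} when possible and otherwise building new base graphs. The cleanest route uses Theorem~\ref{thm:thickenedCn} directly: take $X = C_n$ with $n$ even, $n > 4$, and $F$ a $1$-factor of $X$. Then $X(F,m)$ is vertex-transitive of arc-type $m+1$ (since $(n,m) \ne (4,2)$), and prime with respect to the Cartesian product (since $n \ne 4$). Letting $n$ range over all even integers greater than $4$ yields infinitely many pairwise non-isomorphic examples — they have $mn$ vertices, so distinct values of $n$ give distinct orders. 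This already proves the statement for every $m \ge 2$ in one stroke, and it also gives primality for free, which will be convenient when these graphs are fed into the Cartesian product construction of Theorem~\ref{thm:Cartesian} later.

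First I would state that we take $X = C_n$ for even $n > 4$ and $F$ a $1$-factor, and apply Theorem~\ref{thm:thickenedCn}: this immediately gives that $X(F,m)$ is vertex-transitive with arc-type $m+1$ and is prime. Second, I would observe that $|V(X(F,m))| = mn$, so as $n$ ranges over $\{6, 8, 10, \dots\}$ we obtain infinitely many such graphs, no two of which are isomorphic (indeed no two of which even have the same order). This completes the proof. If one wanted the stronger conclusion that the graphs are prime (matching the pattern of Lemmas~\ref{lemma:typem} and~\ref{lemma:type(mm)}), this is already included, since Theorem~\ref{thm:thickenedCn} guarantees primality whenever $n \ne 4$.

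There is essentially no obstacle here, since the hard work has been done in Theorem~\ref{thm:thickenedCn}; the only thing to be careful about is excluding the exceptional pair $(n,m) = (4,2)$, which is automatic once we require $n > 4$. One subtlety worth a sentence: we should confirm that $X(F,m)$ genuinely has valency $m+1$ and not less — each vertex of $C_n$ is incident with exactly one edge of $F$ and one edge outside $F$, so in the thickened cover each vertex $(u,i)$ is joined to $m$ vertices across the $F$-edge (forming a $K_{m,m}$) and to $1$ vertex across the non-$F$-edge, giving valency $m+1$; this is exactly what Theorem~\ref{thm:thickenedCn} records. The two arc-orbits $\Delta_1$ (of size $1$) and $\Delta_2$ (of size $m$), both self-paired, account for the arc-type $m+1$, and the quadrangle-counting argument in the proof of Theorem~\ref{thm:thickenedCn} is what distinguishes them — but we may simply cite that theorem rather than reproduce the argument.
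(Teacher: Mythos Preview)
Your proposal is correct and follows exactly the same approach as the paper: apply Theorem~\ref{thm:thickenedCn} to the thickened $m$-cover of $C_n$ over a $1$-factor for each even $n > 4$, which immediately gives prime VT graphs of arc-type $m+1$. The only addition you make beyond the paper's one-line proof is the (harmless) explicit observation that distinct $n$ yield distinct orders $mn$, ensuring infinitely many non-isomorphic examples.
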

\begin{proof}
By Theorem \ref{thm:thickenedCn}, for every $m \ge 2$ and every even $n > 4$, 
the thickened $m$-cover of $C_n$ over a 1-factor $F$ (of $C_n$) is a prime VT graph 
with arc-type $m+1$. 
\end{proof}

In fact we will need only need one prime VT graph with arc-type $m+1$ for each $m$ 
in the proof of Theorem \ref{thm:realisable}, as we do for the next two arc-types, 
$m+(1+1)$ and $1+(m+m)$, as well. 

\begin{Lemma}                \label{lemma:typem(11)}
For every integer $m \ge 2$ there exists a prime VT graph with arc-type $m+(1+1)$. 
\end{Lemma}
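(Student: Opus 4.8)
The plan is to construct the required graph as a thickened cover of the smallest trivalent vertex-transitive graph of arc-type $(1+1)+1$, namely the $20$-vertex graph of Figure~\ref{fig:zerosym20a} (case~(P8)). Call this graph $Y$. It is a GRR for a group $G$ of order $20$ generated by an involution $x$ and a non-involution $y$, its arc-type is $(1+1)+1$, the self-paired summand corresponding to the edge-orbit $\Phi$ of $x$ (a perfect matching of $Y$), and the bracketed pair to the arc-orbits $\{(g,yg):g\in G\}$ and $\{(g,y^{-1}g):g\in G\}$. The group $G$ is necessarily non-abelian, since for an abelian $G$ the inversion map $g\mapsto g^{-1}$ would be a vertex-stabilising automorphism of $Y$ interchanging $y$ with $y^{-1}$, ruling out a $(1+1)$ summand; hence $\langle y\rangle = \langle x,y\rangle$ would force $G$ cyclic, so in fact $x\notin\langle y\rangle$. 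Consequently each edge $\{g,xg\}$ of $\Phi$ joins the \emph{distinct} right cosets $\langle y\rangle g$ and $\langle y\rangle xg$, and since the components of $Y\setminus\Phi=\Cay(G,\{y,y^{-1}\})$ are exactly the right cosets of $\langle y\rangle$, the two ends of every edge of $\Phi$ lie in different components of $Y\setminus\Phi$. Thus $Y$ and $\Phi$ satisfy the hypotheses of Corollary~\ref{cor:thcovervt}, Lemma~\ref{lemma:theta} and Theorem~\ref{thm:samearcorbit}.

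Now fix $m\ge 2$ and put $X=Y(\Phi,m)$ (for $m=2$ this is the $40$-vertex graph of Figure~\ref{fig:211}). By Corollary~\ref{cor:thcovervt} the graph $X$ is vertex-transitive, and it has valency $2+m$, since at each vertex the two non-$\Phi$ edges survive once each while the one $\Phi$-edge becomes $m$ edges. Let $\Delta$ be the set of arcs of $X$ lying over edges of $\Phi$, and let $\Delta_{+},\Delta_{-}$ be the sets of arcs lying over $y$-edges, respectively $y^{-1}$-edges, of $Y$. Applying Theorem~\ref{thm:samearcorbit} to the self-paired $\Phi$-arc-orbit of $Y$ (both parts apply, since a $\Phi$-arc of $Y$ lies over an edge of $\Phi$) shows that $\Delta$ is a single arc-orbit of $X$ and is self-paired, so $\Delta$ contributes the summand $m$. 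Applying part~(a) of Theorem~\ref{thm:samearcorbit} to the $y$- and $y^{-1}$-arc-orbits shows that $\Delta_{+}$ and $\Delta_{-}$ are arc-orbits of $X$, each meeting the arc-set of a given vertex in one arc; and the reverse of an arc of $\Delta_{+}$ is an arc of $\Delta_{-}$ straight from the definition of the thickened cover, so $\Delta_{-}=\Delta_{+}^{*}$. Hence, as soon as $\Delta,\Delta_{+},\Delta_{-}$ are shown to be pairwise distinct, the arc-type of $X$ is $m+(1+1)$.

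The distinctness requires two arguments. To separate $\Delta$ from $\Delta_{+}\cup\Delta_{-}$, I would count the $4$-cycles of $X$ through an edge: an edge over an edge of $\Phi$ lies in the $(m-1)^{2}$ four-cycles of its $K_{m,m}$-block together with a bounded number of further four-cycles forced by the defining relations of $G$, whereas an edge over a $y$- or $y^{-1}$-edge lies in exactly $m$ four-cycles; a direct computation shows these two numbers are distinct for every $m\ge 2$, so $\Aut(X)$ preserves the partition of $E(X)$ into $\Phi$-edges and non-$\Phi$-edges, and no automorphism of $X$ carries an arc of $\Delta$ to an arc of $\Delta_{+}\cup\Delta_{-}$. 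To separate $\Delta_{+}$ from $\Delta_{-}$, suppose some automorphism of $X$ sent a $y$-arc to a $y^{-1}$-arc; after composing with suitable powers of $\tilde\varphi$ and lifts $\tilde\psi$ of automorphisms of $Y$, we could assume it fixes a vertex $(v,0)$, preserves the layer-$0$ copy of the $\langle y\rangle$-cycle $C$ of $Y\setminus\Phi$ through $v$, and reverses it. No automorphism of $Y$ reverses such a cycle (that would make the $y$-arc-orbit self-paired, contradicting arc-type $(1+1)+1$), and one has to check that none of the extra automorphisms $\tilde\varphi,\tilde\theta$ of $X$ does so either. This last point --- effectively a sufficiently precise description of $\Aut(Y(\Phi,m))$, showing it to be generated by $\tilde\varphi$, the lifts $\tilde\psi$ and the $\tilde\theta(u,v,i,j)$ --- is the main obstacle; for $m=2$ it can be verified directly on the $40$-vertex graph, and for $m\ge 3$ the size of the $K_{m,m}$-blocks should make $X$ rigid enough to push it through. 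Granting it, ${\rm at}(X)=m+(1+1)$.

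Finally, primality with respect to the Cartesian product is automatic: a vertex-transitive graph of arc-type $m+(1+1)$ cannot be a non-trivial Cartesian product. Indeed, writing the prime factorisation of such a product and grouping isomorphic factors as $Z_{1}^{\square k_{1}}\cp\dots\cp Z_{s}^{\square k_{s}}$, with the parts $Z_{i}^{\square k_{i}}$ pairwise relatively prime, Theorem~\ref{thm:Cartesian} gives that the arc-type is the concatenation of the arc-types of the $Z_{i}^{\square k_{i}}$; and since $\Aut(Z^{\square k})=\Aut(Z)\wr S_{k}$ (see \cite[Chapter~4]{ImrichKlavzar}), the arc-type of $Z_{i}^{\square k_{i}}$ is obtained from that of $Z_{i}$ by multiplying every part, and the content of every bracket, by $k_{i}$. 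The bracketed pair $(1+1)$ would then have to arise from a single factor $Z_{i}$ with $k_{i}=1$, whose arc-type is either $(1+1)$ --- impossible, since no vertex-transitive graph has that arc-type (case~(P4)) --- or $(1+1)+m$, which forces $Z_{i}$ to be the whole graph. So no such factorisation exists, and the proof is complete.
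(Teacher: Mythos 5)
Your construction is the same as the paper's: thicken the $20$-vertex graph of arc-type $1+(1+1)$ over its self-paired edge-orbit $F$ (the perfect matching), apply Theorem~\ref{thm:samearcorbit} to conclude that the arc-type of $X(F,m)$ is either $m+(1+1)$ or $m+2$, and then rule out $m+2$. But the decisive step --- showing that no automorphism of $X(F,m)$ carries the arc $((1,0),(2,0))$ to $((1,0),(7,0))$, i.e.\ that the two paired arc-orbits of the base do not fuse in the cover --- is exactly the step you leave unproven (``the main obstacle \dots\ Granting it''). The route you propose, a full description of $\Aut(X(F,m))$ as generated by $\tilde\varphi$, the lifts $\tilde\psi$ and the $\tilde\theta(u,v,i,j)$, is far stronger than what is needed and is not established anywhere; moreover an automorphism of the cover need not a priori project to one of the base, so ``no automorphism of $Y$ reverses the $\langle y\rangle$-cycle'' does not transfer by itself. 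The paper closes this gap with a concrete invariant: both edges $\{(1,0),(2,0)\}$ and $\{(1,0),(7,0)\}$ lie on exactly $4m^2$ seven-cycles, but when two such $7$-cycles differ in a single vertex, the differing position is the $4$th vertex for the first edge and the $6$th for the second --- automorphism-invariant data separating the two arcs. Without some such argument the lemma is not proved. A second, smaller, error: a non-$F$ edge of $X(F,m)$ lies in exactly \emph{one} quadrangle (the layer copy of the unique base quadrangle through $\{1,2\}$), not $m$; since an $F$-fibre edge lies in $(m-1)^2$ quadrangles, the counts coincide at $m=2$, so quadrangles alone cannot even separate the two edge-orbits in that case --- the paper again resorts to $7$-cycle counts ($16$ versus $12$) when $m=2$.

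On the positive side, your primality argument is correct and genuinely different from the paper's. You group the prime factorisation into pairwise relatively prime Cartesian powers, note that the arc-type of $Z^{\square k}$ is that of $Z$ with every part multiplied by $k$ (valid since $\Aut(Z^{\square k})=\Aut(Z)\wr S_k$ for $Z$ connected and prime), and conclude that the bracket $(1+1)$ would have to come from a single factor with arc-type $(1+1)$ (impossible) or $m+(1+1)$ (the whole graph). The paper instead argues locally, using part (c) of Lemma~\ref{lem:notCartesian1} to force all $m+2$ edges at a vertex into one factor. Your version is shorter, needs no computation inside the specific graph, and applies verbatim to any VT graph of arc-type $m+(1+1)$ --- a nice observation, but it does not repair the main gap above.
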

\begin{proof}
Let $X$ be the graph with arc-type $1+(1+1)$  given in Figure \ref{fig:zerosym20a}.
Before proceeding, we describe some additional properties of $X$. 
First, $\Aut(X)$ is generated by the involutory automorphism $\alpha$ 
that takes $v \mapsto 21-v$ for all $v \in V(X)$, and the automorphism $\beta$ of order $4$ 
that acts as $(1, 7, 8, 2)(3, 20, 6, 9)(4, 14, 5, 15)(10, 17, 19, 12)(11, 16, 18, 13)$ 
on vertices.  In fact $\Aut(X)$ is isomorphic to the semi-direct product $C_5 \rtimes_3 C_4$,  
with normal subgroup of order $5$ generated by $\gamma = \alpha\beta^2$ ($=[\beta,\alpha]$), 
and $\beta^{-1}\gamma\beta = \gamma^3$. 
In particular, $X$ is a Cayley graph (and a GRR) for this group. 

The graph $X$ has two edge-orbits: one of size $20$ containing the edges $\{1,2\}$ and $\{1,7\}$, 
and one of size $10$ containing the edge $\{1,20\}$. 
Edges in the first orbit lie in quadrangles, while those in the second do not. 
The arc $(1,20)$ is reversed by the automorphism $\alpha$, so it lies in a self-paired 
arc-orbit, of size $20$.  
On the other hand, the arcs $(1,2)$ and $(1,7)$ lie in distinct paired arc-orbits, also of size $20$. 
(This can be seen by either considering the images of $(1,20)$ under the $20$ 
automorphisms $\beta^{i}\gamma^{j}$ (for $i \in \bZ_4$ and $j \in \bZ_5$), or by using the 
effect on $7$-cycles to show there is no automorphism that takes $(1,2)$ to $(1,7)$.) 

Now let $F$ be the smaller edge-orbit, containing the edges of the form $\{2i,2i+1\}$ 
(with the vertices considered mod $20$, so we treat $20$ as $0$), 
and let $Y$ be the thickened $m$-cover  $X(F,m)$ of $X$ over $F$.
Then $Y$ is vertex-transitive, by Corollary \ref{cor:thcovervt}, and its valency  is $m+2$.

The edges from $(1,0)$ to $(2,0)$ and $(1,0)$ to $(7,0)$ both lie in a single quadrangle, 
namely the one with vertices $(1,0)$, $(2,0)$, $(8,0)$ and $(7,0)$, 
while the edge from $(1,0)$ to $(20,0)$ lies in exactly $(m-1)^2$ quadrangles, 
namely the ones with third and fourth vertices $(1,i)$ and $(20,j)$ for any $i,j \in \bZ_m\setminus \{0\}$. 
Hence if $m > 2$, then the edges from $(1,0)$ to $(2,0)$ and $(1,0)$ to $(7,0)$ cannot lie in 
the same edge-orbit as the edge from $(1,0)$ to $(20,0)$.  
This is also true when $m = 2$, because (for example) the edges from $(1,0)$ to $(2,0)$ 
and $(1,0)$ to $(7,0)$ both lie in $16$ different $7$-cycles, while the edge from $(1,0)$ to $(20,0)$ 
lies in only $12$ different $7$-cycles.  
(This can be checked by hand or by use of {\sc Magma}.) 
Also $X \setminus F$ is a disjoint union of quadrangles 
(on vertex-sets $\{4i+1,4i+2,4i+7,4i+8\}$ for $i \in \bZ_5$), and so
Theorem \ref{thm:samearcorbit} applies.
By part (b) of Theorem \ref{thm:samearcorbit}, the edge-orbit $F$ of $X$ gives 
rise to a summand $m$ for the arc-type of $Y,$ and then by part (a), noting that $(1,2)$ 
and $(7,1)$ lie in the same arc-orbit of $X$, we find that $Y = X(F,m)$ has arc-type $m+(1+1)$ or $m+2$. 

To show that $Y$ has arc-type $m+(1+1)$, again we consider $7$-cycles. 
It is an easy exercise to show that there are exactly $4m^2$ cycles of length $7$ containing 
the edge from $(1,0)$ to $(2,0)$, namely those of the following forms: \\[-18pt] 
\begin{itemize} 
\item $( (1,0), (2, 0), (3, i), (4, i), (5, j), (6, j), (7, 0) )$, \ for any $i,j \in \bZ_m$,  \\[-20pt] 
\item $( (1,0), (2, 0), (3, i), (4, i), (18, i), (19, j), (20, j) )$, \ for any $i,j \in \bZ_m$,  \\[-20pt] 
\item $( (1,0), (2, 0), (3, i), (17, i), (18, i), (19, j), (20, j) )$, \ for any $i,j \in \bZ_m$,  \\[-20pt] 
\item $( (1,0), (2, 0), (8, 0), (9, i), (15, i), (14, j), (20, j) )$, \ for any $i,j \in \bZ_m$. \\[-18pt] 
\end{itemize}
Note that some of these can differ in only one vertex, namely in the $4$th vertex of the 
second and third forms, 
for a given pair $(i,j)$. 
Similarly, there are exactly $4m^2$ cycles of length $7$ containing 
the edge from $(1,0)$ to $(7,0)$, namely those of the following forms: \\[-18pt] 
\begin{itemize} 
\item $( (1,0), (7, 0), (6, i), (12, i), (13, j), (14, j), (20, j) )$, \ for any $i,j \in \bZ_m$,  \\[-20pt] 
\item $( (1,0), (7, 0), (6, i), (12, i), (13, j), (19, j), (20, j) )$, \ for any $i,j \in \bZ_m$,  \\[-20pt] 
\item $( (1,0), (7, 0), (6, i), (5, i), (4, j), (3, j), (2, 0) )$, \ for any $i,j \in \bZ_m$,  \\[-20pt] 
\item $( (1,0), (7, 0), (8, 0), (9, i), (15, i), (14, j), (20, j) )$, \ for any $i,j \in \bZ_m$. \\[-18pt] 
\end{itemize}
But in these cases, when two such $7$-cycles differ in only one vertex, they differ in the 
$6$th vertex (of the first and second form, for a given pair $(i,j)$).  
It follows that there can be no automorphism of $Y$ taking the arc $((1,0), (2,0))$
to the arc $((1,0), (7,0))$, and so the arc-type of $Y$ must be $m+(1+1)$. 

It remains to show that $Y = X(F,m)$ is prime. 
For this, we consider any decomposition of $Y$ into Cartesian factors, which are 
connected by Proposition~\ref{prop:connected}, and we apply Lemma \ref{lem:notCartesian1}.
The edge $\{(1,0), (2,0)\}$ lies in no quadrangle with any of the edges of the form $\{(1,i), (20,j)\}$, 
for $i \ne 0$, and it follows from part (c) of Lemma \ref{lem:notCartesian1} that all those edges must 
lie in the same factor of $Y$ as $\{(1,0), (2,0)\}$, say $U$. 
The same argument holds for the edge $\{(1,0), (7,0)\}$, and so this lies in $U$ as well.
On the other hand, every edge of the form $\{(1,0), (20,k)\}$ lies opposite 
an edge of the form $\{(1,i), (20,j)\}$ with $i \ne 0$ and $j \ne k$ in some quadrangle, 
so it lies in $U$ too.  Hence $U$ contains all $m+2$ edges incident with the vertex $(1,0)$. 
By vertex-transitivity and connectivity, all edges of $Y$ lie in $U$, and so $U = Y$. 
Thus $X(F,m)$ is prime.
\end{proof}


\begin{Lemma}                \label{lemma:type1(mm)}
For every integer $m \ge 2$ there 
exists a prime VT graph with arc-type $1+(m+m)$. 
\end{Lemma}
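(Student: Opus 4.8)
The plan is to realise the required graph as a thickened cover of the $20$-vertex zero-symmetric graph $X$ with LCF-code $[6,6,-6,-6]^5$ that was studied in the proof of Lemma~\ref{lemma:typem(11)}, and whose arc-type is $(1+1)+1$. Recall from there that $X$ has exactly two edge-orbits: a self-paired orbit $M$ of size $10$ that forms a perfect matching of $X$ (one edge per vertex, contributing the summand $1$ to the arc-type), and an orbit $F$ of size $20$ (two edges per vertex) whose arcs split into two \emph{distinct} paired arc-orbits $\Delta$ and $\Delta^{*}$ of $\Aut(X)$, contributing the summand $(1+1)$. I would take $Y=X(F,m)$, the thickened $m$-cover of $X$ over this larger orbit $F$.

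First I would verify the hypotheses of Corollary~\ref{cor:thcovervt} and Theorem~\ref{thm:samearcorbit}. Here $F$ is a union of edge-orbits, and $X\setminus F=M$ is a disjoint union of ten single edges, so every edge of $F$ automatically joins vertices in different components of $X\setminus F$. Hence $Y$ is vertex-transitive, and its valency is $1+2m$, since at each vertex the one matching edge is left alone while each of the two $F$-edges is replaced by $m$ arcs. By part~(a) of Theorem~\ref{thm:samearcorbit} the arcs of $Y$ arising from $M$ form a single self-paired arc-orbit, of size $1$ at each vertex; by part~(b) of Theorem~\ref{thm:samearcorbit} the $F$-edges give arc-orbits $\tilde\Delta$ and $\tilde\Delta^{*}$ (the thickened copies of $\Delta$ and of $\Delta^{*}$), each of size $m$ at each vertex, and these are paired with each other because reversing an arc of $\Delta$ yields an arc of $\Delta^{*}$. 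Counting shows $A(Y)$ is the union of these orbits, and a short case analysis of the possible coincidences (using that the $M$-orbit is self-paired while $\tilde\Delta$ is paired with $\tilde\Delta^{*}$) shows that the arc-type of $Y$ equals $1+(m+m)$ exactly when $\tilde\Delta\neq\tilde\Delta^{*}$, and otherwise would be $1+2m$ or $2m+1$.

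The crux, and the step I expect to be the main obstacle, is thus to show $\tilde\Delta\neq\tilde\Delta^{*}$, that is, that no automorphism of $Y$ reverses a $\tilde\Delta$-arc such as $((1,0),(2,0))$. In $X$ itself no automorphism reverses the corresponding edge of $F$ --- this is exactly the $7$-cycle observation recorded in the proof of Lemma~\ref{lemma:typem(11)} --- and I would transfer this obstruction to $Y$ by examining the short cycles of $Y$ through $((1,0),(2,0))$ and through its reverse. As in the proof of Lemma~\ref{lemma:typem(11)}, I expect the relevant families of $7$-cycles to have equal cardinality but to differ in their fine structure (in which coordinate position two nearly-equal cycles can differ), and this asymmetry forbids any arc-reversing automorphism; a separate direct check is likely to be needed for the smallest value $m=2$.

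Finally I would prove that $Y$ is prime with respect to the Cartesian product, by the method used for the graph in Lemma~\ref{lemma:typem(11)}. Using Lemma~\ref{lem:notCartesian1}, I would show that all $1+2m$ edges incident with a fixed vertex of $Y$ lie in one Cartesian factor: in most cases because no $4$-cycle of $Y$ passes through a given pair of them, so part~(c) of Lemma~\ref{lem:notCartesian1} puts them in the same factor, and in the remaining cases by chaining opposite edges of $4$-cycles via part~(b). Vertex-transitivity and connectedness then force that factor to be all of $Y$, so $Y$ is prime, which completes the proof.
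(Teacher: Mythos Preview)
Your proposal is correct and follows essentially the same route as the paper: thicken the $20$-vertex graph $X$ over its larger edge-orbit $F$, use Theorem~\ref{thm:samearcorbit} to reduce the arc-type to $1+(m+m)$ or $2m+1$, and then separate $\tilde\Delta$ from $\tilde\Delta^{*}$ by the $7$-cycle asymmetry exactly as in Lemma~\ref{lemma:typem(11)}. Two small points where the paper is tighter: (i) no special treatment of $m=2$ is needed here, since the edge-orbit distinction comes from the quadrangle counts $0$ versus $(2m-1)^2$, and the $7$-cycle argument goes through uniformly; (ii) primality is immediate from Corollary~\ref{cor:notCartesian}, because the matching edge $\{(1,0),(20,0)\}$ lies in \emph{no} $4$-cycle of $Y$ at all --- so the chaining argument via Lemma~\ref{lem:notCartesian1} that you propose, while valid, is more work than necessary.
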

\begin{proof}
This is very similar to the proof of Lemma  \ref{lemma:typem(11)}. 
Again, let $X$ be the graph with arc-type $1+(1+1)$ given in Figure \ref{fig:zerosym20a}, 
but this time take $F$ to be the edge-orbit of $X$ containing the edge $\{1,2\}$ (and the edge $\{1,7\}$). 
Then the  thickened $m$-cover  $Z = X(F,m)$ of $X$ over $F$ is vertex-transitive, 
with valency $1+2m$.
Also the edge from $(1,0)$ to $(20,0)$ lies in no quadrangles, which implies 
immediately that  $Z$ is prime. 
On the other hand, 
the edge from $(1,0)$ to $(2,0)$ lies 
in $(2m-1)^2$ quadrangles, 
and so again 
the edge from $(1,0)$ to $(2,0)$ 
cannot lie in the same edge-orbit as the edge from $(1,0)$ to $(20,0)$.  
Next, $X \setminus F$ is a union of $10$ non-incident edges, 
and hence by part (a) of Theorem \ref{thm:samearcorbit}, 
we find that $Z = X(F,m)$ has arc-type $1+(m+m)$ or $2m+1$. 
Finally, as before, there are $4m^2$ cycles of length $7$ containing 
the edge from $(1,0)$ to $(2,0)$, and $4m^2$ 
containing the edge from $(1,0)$ to $(7,0)$, but when two of these cycles differ in only 
one vertex, it is in the $4$th vertex in the former case, but in the $6$th vertex in the 
latter case, and so there can be no automorphism of $Z$ taking the arc $((1,0), (2,0))$
to the arc $((1,0), (7,0))$.  Hence the arc-type of $Z$ is $1+(m+m)$. 
\end{proof}


Now we consider the marked partition $(1+1)+(1+1)$ of $4$.  
For this one, we use a quite different construction. 

\begin{Lemma}                \label{lemma:type(11)(11)}
There are infinitely many prime VT graphs with arc-type $(1+1)+(1+1)$. 
\end{Lemma}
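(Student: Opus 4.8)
The plan is to produce an explicit infinite family of Cayley graphs (GRRs) for groups of the form $C_p \rtimes C_6$, generalising the order-$42$ example in case (P14). First I would set up the construction: for each prime $p$ with $6 \mid p-1$, let $G = C_p \rtimes C_6 = \langle a, b \mid a^p = b^6 = 1, \ b^{-1}ab = a^r\rangle$, where $r$ is an element of order $6$ in $(\bZ_p)^\times$. Take the generating set $S = \{a, a^{-1}, b, b^{-1}\}$, which is inverse-closed and does not contain the identity, and let $X_p = \Cay(G, S)$, a connected $4$-valent vertex-transitive graph. The two `natural' edge-orbits under the right-regular action of $G$ are the $a$-edges and the $b$-edges; since $b$ has order $6 \ne 2$, the arc $(1, b)$ and its reverse $(1, b^{-1})$ would naturally sit in paired but distinct orbits, and the same for $(1,a)$ and $(1,a^{-1})$, giving a candidate arc-type $(1+1)+(1+1)$.

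The key step is to verify that $G$ is the \emph{full} automorphism group of $X_p$, i.e.\ that $X_p$ is a GRR. Here I would invoke (or reprove in this special case) the Babai--Godsil style fact alluded to in the introduction, or argue directly: the stabiliser $A_v$ of a vertex fixes each of the four edge-orbits setwise once we know there are exactly four arc-orbits, hence fixes each neighbour of $v$, and then by connectedness $A_v = 1$, so $A = G$. To get the four distinct arc-orbits I need to rule out any automorphism that mixes $a$-edges with $b$-edges or swaps $(1,a)\leftrightarrow(1,a^{-1})$ (resp.\ $(1,b)\leftrightarrow(1,b^{-1})$). The cleanest way is a combinatorial invariant: count short cycles (girth cycles, or cycles of some fixed small length) through each of the four arcs emanating from the identity, and show these counts distinguish the $a$-direction from the $b$-direction, and distinguish each direction from its reverse. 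Because $b$ has order $6$, there is a natural $6$-cycle $1, b, b^2, \dots, b^5$ through the $b$-edges but nothing comparable through the $a$-edges (whose only short relations come from the semidirect product relation $ab = ba^{r}$, giving cycles of a different length depending on the order of $r$ and on $p$). For $p$ large enough this asymmetry is robust, and the non-self-pairing of each direction follows from the fact that no automorphism of the Cayley graph can invert $a$ while fixing $1$ and respecting the $b$-structure (one checks $a \mapsto a^{-1}, b \mapsto b^{\pm1}$ does not extend to a group automorphism of $G$, or equivalently does not preserve the edge set consistently). This gives arc-type $(1+1)+(1+1)$ for all sufficiently large admissible $p$.

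Finally I would dispense with primality with respect to the Cartesian product. By Theorem~\ref{thm:Cartesian}, if $X_p = U \cp W$ with $U, W$ relatively prime non-trivial VT graphs, then $(1+1)+(1+1) = \mathrm{at}(X_p) = \mathrm{at}(U) + \mathrm{at}(W)$, which would force one factor to have arc-type $(1+1)$ --- impossible, since $(1+1)$ is not realisable (cycles are the only $2$-valent connected graphs). Hence the prime factors of $X_p$ are all isomorphic, so $|V(X_p)| = 6p$ is a perfect power $|V(U)|^k$; since $p$ is prime and $p^2 \nmid 6p$, we get $k = 1$, so $X_p$ is prime. Since there are infinitely many primes $p \equiv 1 \pmod 6$ and the arc-type computation holds for all sufficiently large such $p$, this yields infinitely many prime VT graphs with arc-type $(1+1)+(1+1)$.

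The main obstacle I anticipate is the GRR verification --- specifically, ruling out `accidental' extra automorphisms for small or special primes and cleanly organising the short-cycle counting argument so that it genuinely separates all four arcs. A convenient fallback is to fix one carefully chosen $p$ (e.g.\ the order-$42$ graph already exhibited, together with one larger instance) and establish the arc-type by the $7$-cycle / girth-cycle enumeration technique used in the proofs of Lemmas~\ref{lemma:typem(11)} and~\ref{lemma:type1(mm)}, or simply cite a {\sc Magma} computation for the base case and then bootstrap to infinitely many via a covering construction (e.g.\ homological $\bZ_q$-covers with no new short cycles), mirroring the remark after Lemma~\ref{lemma:typem}.
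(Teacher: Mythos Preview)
Your construction has a genuine gap: with the ``obvious'' generating set $S = \{a, a^{-1}, b, b^{-1}\}$ (where $a$ has order $p$ and $b$ has order $6$), the Cayley graph is \emph{never} a GRR. The map $\sigma$ defined by $a \mapsto a^{-1}$, $b \mapsto b$ \emph{does} extend to a group automorphism of $G$: from $b^{-1}ab = a^{r}$ one gets $b^{-1}a^{-1}b = (a^{r})^{-1} = (a^{-1})^{r}$, so the defining relation is preserved. Since $\sigma(S) = S$, this gives a non-trivial graph automorphism fixing the identity vertex and swapping the arcs $(1,a)$ and $(1,a^{-1})$ while fixing $(1,b)$ and $(1,b^{-1})$. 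Hence the $a$-arcs sit in a single self-paired orbit of size $2$ at each vertex, and the arc-type is $2 + (1+1)$ at best, not $(1+1)+(1+1)$. Your sentence ``one checks $a \mapsto a^{-1}, b \mapsto b^{\pm 1}$ does not extend to a group automorphism'' is simply false for the $+$ sign, and no amount of cycle-counting will separate $(1,a)$ from $(1,a^{-1})$ once a genuine automorphism interchanges them.

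The paper uses the same family of groups $C_p \rtimes C_6$ but avoids this trap by choosing a more subtle generating set: with $a$ of order $6$ and $b$ of order $p$ satisfying $a^{-1}ba = b^{k}$ ($k$ a primitive $6$th root of $1$ mod $p$), it takes $x = a$ and $y = ba^{2}$, so that $y^{3} = 1$. The resulting Cayley graph has triangles through the $y$-edges but not the $x$-edges (giving the two edge-orbits), and a short analysis of common neighbours in the $4$-neighbourhood of $1$ pins the vertex-stabiliser down to order at most $2$; the remaining involutory possibility is then eliminated by a direct group-automorphism contradiction (it would force $k \equiv -1 \bmod p$). Your primality argument via Theorem~\ref{thm:Cartesian} and the non-square order $6p$ is fine and is exactly what the paper does; the issue is solely the choice of $S$.
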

\begin{proof}
For any prime number $p \equiv 1$ mod $6$, let $G$ be the group $C_p \rtimes_k C_6$, 
generated by two elements $a$ and $b$  of orders $6$ and $p$ such that $a^{-1}ba = b^k$, 
where $k$ is a primitive $6$th root of $1$ mod $p$. 
Also take $S = \{x,y,x^{-1},y^{-1}\}$ where $x = a$ and $y = ba^2$, 
and let $X$ be the Cayley graph $\Cay(G,S)$.

Then $X$ is a 4-valent VT graph, 
and from the natural action of $G$ by right multiplication, 
it is easy to see that all edges of the form $\{g,xg\}$ or $\{g,x^{-1}g\}$ 
lie in a single edge-orbit, as do all edges of the form $\{g,yg\}$ or $\{g,y^{-1}g\}$. 
We now show that these edge-orbits are distinct, and that each gives rise to two distinct arc-orbits, 
by proving that the stabiliser in $\Aut(X)$ of vertex $1$ is trivial. 

First, we observe that  $0 \equiv 1-(k^2)^3  \equiv (1-k^2)(1+k^{2}+k^{4})$ mod $p$, 
and then since $k^2 \not\equiv 1$ mod $p$, it follows that 
$1+k^{2}+k^{4} \equiv 0$ mod $p$, and therefore 
$$
y^3 = (ba^2)^3 = b(a^{-4}ba^{4})(a^{-2}ba^{2}) = b b^{k^{4}} b^{k^2} 
   = b^{1+k^{4}+k^{2}} = b^0 = 1.   
$$
In particular, every edge of the form $\{g,yg\}$ or $\{g,y^{-1}g\}$ lies in a 
$3$-cycle (associated with the relation $y^3 = 1$).  On the other hand, 
it is easy to see that no edge of the form $\{g,xg\}$ or $\{g,x^{-1}g\}$ lies 
in a $3$-cycle, and so $X$ has two distinct edge-orbits, and its edge-type is $2+2$. 

Similarly, we note that  $0 \equiv 1-(k^3)^2  \equiv (1-k^3)(1+k^3)$ mod $p$ 
but $k^3 \not\equiv 1$ mod $p$, so $k^3 \equiv -1$ mod $p$, and therefore 
$yx = ba^3 = a^{3}b^{-1} = a^{-3}b^{-1} = x^{-1}y^{-1}$. 
Hence the two vertices $x$ and $y^{-1}$ have two common neighbours, 
namely 1 and $yx$.  Also  
$xy = x(yx)x^{-1} = x(x^{-1}y^{-1})x^{-1} = y^{-1}x^{-1}$, 
and therefore $x^{-1}$ and $y$ have two common neighbours, 
namely $1$ and $xy$.  
Furthermore, it is an easy exercise to verify that no other two neighbours of $1$ 
have a second common neighbour.  

It follows that every automorphism $\alpha$ of $X$ that fixes the vertex $1$ 
must either fix or swap its two neighbours $y$ and $y^{-1}$, and similarly, 
must fix or swap its two neighbours $x$ and $x^{-1}$; and also if $\alpha$ swaps 
one pair, then it must also swap the other pair. 
Hence $\alpha$ either fixes all four neighbours of $1$, or induces a double transposition 
on them.  By vertex-transitivity, the same holds for any automorphism fixing a vertex $v$. 
Moreover, if the automorphism $\alpha$ fixes one of the arcs incident with the 
vertex $1$, then it fixes every neighbour $s$ of $1$, and then since it fixes the arc $(s,1)$, 
it must act trivially on the neighbourhood of $s$; then by induction and connectedness, 
$\alpha$ fixes every vertex of $X$. 

Now suppose $\Aut(X) \ne G$, or equivalently, that the stabiliser in $\Aut(X)$ of 
each vertex is non-trivial.   Now if $\beta$ and $\gamma$ are non-trivial automorphisms 
of $X$ that fix the vertex $1$, then they induce the same permutation $(x,x^{-1})(y,y^{-1})$ 
on the four neighbours of $1$, so $\beta\gamma^{-1}$ acts trivially on the neighbourhood 
of $1$ and hence is trivial, giving $\beta = \gamma$.   Hence the stabiliser of vertex $1$ 
contains a unique non-trivial automorphism, which must have order $2$. 
In particular, $|\Aut(X)| = 2|V(X)| = 2|G|$, and so $G$ is a normal subgroup 
of index $2$ in $\Aut(X)$.  
Moreover, the element of $\Aut(X)$ of order $2$ stabilising 
the vertex $1$ must normalise $G$, and hence induces an automorphism $\theta$ of $G$,  
and from what we saw earlier, $\theta$ swaps $x$ with $x^{-1}$, and $y$ with $y^{-1}$. 
Now $\theta$ takes $a = x$ to $x^{-1} = a^{-1}$, 
and $b = ya^{-2} = yx^{-2}$ to $y^{-1}x^{2} = (ba^{2})^{-1}a^{2} = a^{-2}b^{-1}a^{2} = b^{-k^2}$, 
and so $\theta$ takes $b^k$ to $(b^k)^{-k^2} = b^{-k^3} = b^{-(-1)} = b$.
But on the other hand, $b^k = a^{-1}ba$, and so $\theta$ takes $b^k$ to 
$ab^{-k^2}a^{-1} = b^{-k}$ (since $a^{-1}b^{-k}a = (b^k)^{-k} = b^{-k^2})$. 
Thus $b^{-k} = (b^k)^{\theta} = b$, and it follows that $k \equiv -1$ mod $p$, a contradiction. 

Hence no such automorphism $\theta$ of $G$ exists, and we find that $\Aut(X) = G$, 
and that $X$ has arc-type $(1+1)+(1+1)$, as required.

Finally, we show that $X$ is prime, using a similar argument to the one in the 
proof of Lemma~\ref{lemma:typem}. 
If $X \cong X_1 \cp X_2$ where $X_1$ and $X_2$ are relatively prime 
non-trivial graphs, then by Theorem \ref{thm:Cartesian} we 
have $(1+1)+(1+1) = {\rm at}(X) = {\rm at}(X_1)+ {\rm at}(X_1)$, which is impossible, 
since no VT graph has arc-type $(1+1)$. 
Hence the prime factors of $X$ must be all the same, and so $X$ is a Cartesian 
product of (say) $k$ copies of a single prime graph $Y$.  
But then $6p = |V(X)| = |V(Y)|^k$, which is impossible unless $k = 1$, 
since $p$ is a prime number congruent to $1$ mod $6$.  
Thus $X$ itself is prime. 
\end{proof}

Next, we use the first of these graphs (the one with $p = 7$) to prove the following. 

\begin{Lemma}                \label{lemma:type(mm)(11)}
For every integer $m \ge 2$, there exists a prime VT graph with arc-type $(m+m)+(1+1)$. 
\end{Lemma}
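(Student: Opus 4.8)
The plan is to combine the graph $X$ with arc-type $(1+1)+(1+1)$ from Lemma~\ref{lemma:type(11)(11)} (taking $p=7$, so $X$ has order $42$) with a thickened-cover construction that splits one of its two edge-orbits into a pair of arcs of size $m$, while preserving the other paired-orbit-pair. Concretely, let $F$ be the edge-orbit of $X$ containing the edges $\{g,yg\}$, $\{g,y^{-1}g\}$ (the one whose edges lie in $3$-cycles), and set $Y = X(F,m)$. By Corollary~\ref{cor:thcovervt}, $Y$ is vertex-transitive, and its valency is $2m+2$. The intention is to show ${\rm at}(Y) = (m+m)+(1+1)$.

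First I would check the hypothesis of Theorem~\ref{thm:samearcorbit}: I need every edge $\{u,v\}\in F$ to have $u$ and $v$ in different components of $X\setminus F$. Since the edges of $F$ are exactly those lying in the $3$-cycles coming from $y^3=1$, and the other edge-orbit (the $x$-edges) forms a union of $p$-cycles, deleting $F$ should leave a union of these $p$-cycles, so $u$ and $v$ do lie in different components (one checks the two $F$-edges at a vertex go to two distinct $x$-cycles — this is where the structure of the Cayley graph is used). Granting this, Theorem~\ref{thm:samearcorbit}(b) tells us that the edge-orbit $F$ contributes a single summand $m$ (or rather a bracketed pair $(m+m)$, if it remains non-self-paired), and part~(a) tells us the other edge-orbit of $X$ lifts to arcs that stay in the same orbit(s) of $\Aut(Y)$, contributing either $(1+1)$ or $2$. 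So a priori ${\rm at}(Y)\in\{(m+m)+(1+1),\ (m+m)+2,\ 2m+(1+1),\ 2m+2\}$, and I must rule out the three degenerate possibilities.

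The key separations are: (i) the two edge-orbits of $X$ stay distinct in $Y$ — this follows from counting quadrangles, since an $F$-fibre edge $\{(u,i),(v,j)\}$ lies in at least $(m-1)^2$ quadrangles within the $K_{m,m}$ fibre, whereas an $x$-edge lies in boundedly many (one would count them in $X$ and note the thickening over $F$ adds only quadrangles through $F$-fibres); (ii) the $x$-edge-orbit stays non-self-paired in $Y$, i.e. no automorphism of $Y$ reverses an $x$-arc — this should follow because in $X$ the arc $(1,x)$ and its reverse lie in different orbits (shown in Lemma~\ref{lemma:type(11)(11)} via the $\theta$ argument), and an automorphism of $Y$ reversing $((1,0),(x,0))$ would, after projecting to the base $X$ (using that the fibres over vertices are blocks, by a quadrangle/valency argument), induce an automorphism of $X$ reversing $(1,x)$, a contradiction; (iii) the $F$-orbit stays non-self-paired in $Y$, i.e. the arc $((1,0),(y,0))$ and $((y,0),(1,0))$ lie in different orbits of $\Aut(Y)$ — again this should descend to the corresponding statement in $X$, that the $y$-arc-orbit is non-self-paired, which is part of Lemma~\ref{lemma:type(11)(11)}. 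Assembling (i)--(iii) gives ${\rm at}(Y)=(m+m)+(1+1)$.

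Finally I would show $Y$ is prime with respect to the Cartesian product, using Lemma~\ref{lem:notCartesian1} exactly as in the proof of Lemma~\ref{lemma:typem(11)}: identify, for the vertex $(1,0)$, all edges incident with it and argue via part~(c) of Lemma~\ref{lem:notCartesian1} (uniqueness of the $4$-cycle through a pair of incident edges not in the same factor) that they must all lie in one factor $U$, then conclude $U=Y$ by vertex-transitivity and connectedness; the $F$-fibre edges at $(1,0)$ are forced into $U$ because they lie in many diagonal-free quadrangles with the other $F$-fibre edges, and the $x$-edges are forced in because each lies opposite an $F$-fibre edge (or an $x$-edge that is already known to be in $U$) in a quadrangle. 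Alternatively, since $|V(Y)| = 42m$ and $7 \mid |V(Y)|$ but $49\nmid |V(Y)|$ when $7\nmid m$, one can run the order-divisibility argument of Lemma~\ref{lemma:type(11)(11)} to rule out $Y$ being a proper Cartesian power, combined with Theorem~\ref{thm:Cartesian} to rule out a product of two relatively prime factors (the arc-type $(m+m)+(1+1)$ does not split as a sum of two arc-types, since no VT graph has arc-type $(1+1)$ and an $(m+m)$ summand cannot be split). The main obstacle I expect is item~(ii)/(iii) — cleanly proving that automorphisms of the thickened cover $Y$ descend to the base graph $X$ (i.e. that the vertex-fibres are blocks of imprimitivity for $\Aut(Y)$), so that the non-self-paired-ness established in the base carries upward; this requires a careful local argument distinguishing $F$-fibre edges from $x$-edges by their quadrangle counts, valid for all $m\ge 2$ including the delicate case $m=2$.
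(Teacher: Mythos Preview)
Your overall strategy---thicken one edge-orbit of the $42$-vertex GRR from Lemma~\ref{lemma:type(11)(11)}, then show that vertex-fibres are blocks so that every automorphism of $Y$ projects to $\Aut(X)$---is exactly the paper's. But you chose the \emph{opposite} edge-orbit for $F$. The paper takes $F$ to be the orbit of edges \emph{not} contained in a triangle (the $x$-edges), so that $X\setminus F$ is the disjoint union of the fourteen $y$-triangles. That single change makes the fibre-block step, which you correctly flag as the crux, essentially automatic: every triangle of $Y=X(F,m)$ is then one of the $14m$ layer-copies $\{(u,i),(v,i),(w,i)\}$ of a base triangle, these are pairwise vertex-disjoint, and hence any automorphism of $Y$ permutes them. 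This already shows that fibres over the non-$F$-edges (and hence over $F$-edges) are permuted; a short path-count (a vertex $(u,\ell)$ with $\ell\ne i$ is reached from $(u,i)$ by $2m$ paths of length~$2$, whereas every other distance-$2$ vertex is reached by only $1$, $2$ or $m$ such paths) then pins down the vertex-fibres, and the projection to $\Aut(X)$ follows. Since $X$ is a GRR, non-self-pairedness of both orbits in $Y$ is immediate.

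With your choice (thickening the triangle edges), $X\setminus F$ is a union of seven hexagons and the $F$-fibres assemble into fourteen copies of $K_{m,m,m}$, so each vertex of $Y$ now lies in $m^{2}$ triangles rather than one. The rigidity the paper exploits is gone, and the fibre-block argument you only sketch becomes genuinely delicate; your quadrangle separation in step~(i) is also fragile for small $m$ (for instance with $m=2$ an $x$-edge lies in $m=2$ quadrangles coming from the base $4$-cycle $(1,x,yx,y^{-1})$, comparable to the $F$-edge count). Your alternative primality argument via $42m$ not being a perfect power handles only $7\nmid m$; the paper instead observes that in its $Y$ each vertex lies in a \emph{unique} triangle, so $Y$ cannot contain a copy of $C_{3}\cp C_{3}$ and therefore cannot be a nontrivial Cartesian power. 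Swapping $F$ to the non-triangle orbit turns your outline into a complete proof.
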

\begin{proof}
Let $X$ be the graph with arc-type $(1+1)+(1+1)$ in Figure \ref{fig:11_11}, 
which is also the graph constructed in Lemma  \ref{lemma:type(11)(11)} for $p=7$, 
and let $F$ be the edge-orbit of $X$ consisting of all the edges that are not contained in a triangle. 
(These are the edges corresponding to multiplication by the generator $x$ for $G = C_7 \rtimes C_6$.) 
Now let $Y = X(F,m)$, the thickened $m$-cover of $X$ over $F$.
Then $Y$ is vertex-transitive, by Corollary \ref{cor:thcovervt}, and its valency  is $2m+2$.
Also $X \setminus F$ is a disjoint union of triangles, so Theorem \ref{thm:samearcorbit} applies, 
and tells us that all the edges of $Y$ associated with edges of $F$ lie in the same edge-orbit, 
and all the edges of $Y$ associated with edges of $E(X) \setminus F$ lie in the same edge-orbit. 
We will show that $Y$ has arc-type  $(m+m)+(1+1)$, by proving that these edge-orbits 
are distinct, and that each gives rise to two arc-orbits. 

We do this by showing that every automorphism of $Y = X(F,m)$ induces a permutation 
of the fibres over $X$, and therefore projects to an automorphism of $X$.  
It then follows that any automorphism of $Y$ taking an arc $((v,i),(w,j))$ 
to an arc $((v',i'),(w',j'))$ gives rise to an automorphism of $X$ taking $(v,w)$ to $(v',w')$. 
Hence if $(v,w)$ and $(v',w')$ lie in different arc-orbits of $X$, then $((v,i),(w,j))$ and $((v',i'),(w',j'))$ 
lie in different arc-orbits of $Y$, for all $i,j,i',j' \in \bZ_m$. 

Observe that the graph $X \setminus F$ is a disjoint union of $14$ triangles in $X$, 
and that there are no other triangles in $X$.   
Also it is easy to see that every triangle in $Y$ is one of the $14m$ triangles of the 
form $T_i = \{(u,i),(v,i),(w,i)\}$ for some triangle $T = \{u,v,w\}$ in $X$ and some $i \in \bZ_m$, 
and that these $14m$ triangles are pairwise disjoint. 
In particular, since every automorphism takes triangles to triangles, it follows that every 
automorphism of $Y$ induces a permutation of the fibres over the edges in $E(X) \setminus F$, 
and hence also a permutation of the fibres over the edges in $F$. 
(This also implies that $Y$ is not edge-transitive, so its edge-type is $m+2$.) 

Next, consider what happens locally around a vertex $(u,i)$ of $Y$. 
This vertex lies in a unique triangle $T_i = \{(u,i),(v,i),(w,i)\}$, where $v = yu$ and $z = y^{-1}u$, 
and also lies in $m$ edges of the form $(r,j)$ and $m$ edges of the form $(s,j)$, 
for $j \in \bZ_m$, where $r = xu$ and $s = x^{-1}u$.  
The other vertices in the fibre over the vertex $u$ have the form $(u,\ell)$ for some $\ell \in \bZ_m$, 
and each of these lies at distance $2$ from $(u,i)$.  

In fact, there are $2m$ paths of length $2$ from each one to the given vertex $(u,i)$, 
namely the $m$ paths of the form $((u,\ell),(r,j),(u,i))$ for $j \in \bZ_m$, 
and the $m$ paths of the form $((u,\ell),(s,j),(u,i))$, for $j \in \bZ_m$.
On the other hand, from every other vertex at distance $2$ from the given vertex $(u,i)$ 
there are only $1$, $2$ or $m$ paths of length $2$ to $(u,i)$.  
It follows that the stabiliser in $\Aut(Y)$ of the vertex $(u,i)$ preserves the fibre over the 
vertex $(u,i)$, and therefore $\Aut(Y)$ permutes the fibres over vertices of $Y$. 

Thus $X(F,m)$ has arc-type $(m+m)+(1+1)$.

Finally, we  show that  $Y = X(F,m)$ is prime. 
If $Y \cong Y_1 \cp Y_2$ where $Y_1$ and $Y_2$ are relatively prime non-trivial 
graphs, then each $Y_i$ is vertex-transitive, and by Theorem~\ref{thm:Cartesian} 
the arc-type of $Y_1$ or $Y_2$ is $(1+1)$, which is impossible. 
Hence the prime factors of $Y$ must be all the same, and so $Y$ is a Cartesian 
product of (say) $k$ copies of a single prime graph $Z$.  
Also by part (a) of  Lemma \ref{lem:notCartesian1}, all the edges of a given 
triangle lie in the same factor, so $Z$ contains a triangle. 
But now if $k > 1$ then some subgraph of $Y$ is a Cartesian product of two triangles, 
and in the latter, every vertex lies in two distinct triangles, which does not happen in $Y$.  
Hence $k = 1$, and $Y$ itself is prime. 
\end{proof}

We use yet another construction in the next case, to produce zero-symmetric 
graphs with arc-type $1+1+1$.   Many examples of such graphs are already well 
known, but we need an infinite family of examples that are prime.  
A sub-family of the family we use below appears in \cite[p. 66]{Coxeter}.

\begin{Lemma}                \label{lemma:type111}
There are infinitely many prime VT graphs with arc-type $1+1+1$. 
\end{Lemma}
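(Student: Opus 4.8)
The plan is to exhibit an explicit infinite family of Cayley graphs that are GRRs with arc-type $1+1+1$ (equivalently, zero-symmetric cubic graphs), and prove primality by killing all $4$-cycles. Since a cubic graph has arc-type $1+1+1$ precisely when it is zero-symmetric (Section~\ref{sec:arc-types}), equivalently when its automorphism group acts regularly on vertices with no nontrivial vertex-stabiliser, it suffices to produce, for infinitely many $n$, a group $G_n$ of order $n$ together with a generating set $S_n$ consisting of three involutions (so that $\Cay(G_n,S_n)$ is cubic and inverse-closed), such that $\Cay(G_n,S_n)$ is a GRR and contains no $4$-cycle.

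First I would take $G_n$ to be a dihedral-type or metacyclic group with a presentation on three involutions, for instance $G$ generated by $x,y,z$ with $x^2=y^2=z^2=1$ together with suitable relations forcing the element products $xy$, $yz$, $xz$ to have large orders (so the graph has large girth). A clean choice is the infinite family suggested by the LCF-code $[5,-5]^9$ example: take the cubic Hamiltonian graph with LCF-code $[k,-k]^{n}$ for appropriate $k$ and $n$, which is the Cayley graph of a group of order $2n$ generated by three involutions (one giving the `chord' and two giving adjacent steps along the Hamilton cycle, after a suitable change of variables). I would verify directly from the LCF-description that for $k$ and $n$ chosen so that $\gcd$-conditions hold and $n$ is large, the graph has girth at least $5$, hence no $4$-cycle. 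By Corollary~\ref{cor:notCartesian} this immediately gives that the graph is prime with respect to the Cartesian product.

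Next I would prove that these graphs are GRRs, i.e.\ that the vertex-stabiliser in the full automorphism group is trivial. The standard approach: the three edges at a vertex all lie in pairwise-distinct edge-orbits (because the three involution-generators produce arcs whose orbits are distinguished by local invariants, such as the lengths of the shortest cycles through each edge, which I would arrange to be pairwise distinct by the choice of parameters); hence any vertex-stabiliser fixes each neighbour of each vertex, and then by connectedness is trivial. This is exactly the argument used in the Preliminaries to show that a connected zero-symmetric graph is a Cayley graph for its automorphism group, run in reverse. The main obstacle here is arranging the local cycle-invariants to be genuinely distinct across the three generators for an \emph{infinite} family simultaneously — one has to pick the parameters (the chord length $k$ relative to $n$) so that the three edge-types are permanently distinguishable, not just for small cases.

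The hard part will therefore be the girth/cycle-structure bookkeeping: showing that for infinitely many parameter values the shortest cycles through the three types of edge have three distinct lengths (which forces no edge-automorphism can mix them, giving the GRR property) while simultaneously the overall girth exceeds $4$ (giving primality). I expect to handle this by a direct combinatorial analysis of closed walks in the group $G_n$ expressed as words in $x,y,z$, reducing the question to elementary modular-arithmetic constraints on $k$ and $n$, and then noting that these constraints are satisfied by an infinite arithmetic progression of values of $n$. Once both properties are in hand, the graphs $\Cay(G_n,S_n)$ form the desired infinite family of prime VT graphs with arc-type $1+1+1$, completing the proof.
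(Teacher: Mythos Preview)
Your plan is essentially the same approach as the paper's: take Cayley graphs of dihedral-type groups on three involution generators, show the graph has no $4$-cycles (so primality follows from Corollary~\ref{cor:notCartesian}), and distinguish the three edge-classes by a local cycle invariant to force the vertex-stabiliser to be trivial. The paper carries this out explicitly with $G = D_n$ for odd $n \ge 11$ and generators $x,\, xy,\, xy^3$; the resulting graph is bipartite of girth~$6$, and the three edges at the identity lie in $4$, $5$, and $3$ six-cycles respectively, which separates them.

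One point in your plan deserves a correction. You propose to distinguish the three generators by arranging that the \emph{lengths} of the shortest cycles through the three edge-types are pairwise distinct. In the natural dihedral families (including the paper's and the $[k,-k]^n$ LCF graphs you mention) this typically fails: the graph has a single girth, and all three edge-types sit on girth cycles. The invariant that actually works, and that the paper uses, is the \emph{number} of girth cycles through each edge. This is the only real adjustment your plan needs; once you fix a concrete family and do that count (a short word-combinatorics computation in $D_n$), the argument goes through exactly as you outline.
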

\begin{proof}
Let $G$ be the dihedral group $D_n$ of order $2n$, 
where $n$ is any integer of the form $2m-1$ where $m \ge 6$ 
(so that $n$ is odd and $n \ge 11$).   
Then $G$ is generated by two elements $x$ and $y$ satisfying $x^2 = y^n = 1$ and $xyx = y^{-1}$, 
and the elements of $G$ are uniquely expressible in the form $x^{i}y^{j}$ where $i \in \bZ_2$ and $j \in \bZ_n$. 
(In fact $G$ is the symmetry group of a regular $n$-gon, with the powers of $y$ being rotations 
and elements of the form $xy^j$ being reflections.) 

Now define $X$ as the Cayley graph $\Cay(G,\{x_1,x_2,x_3\})$, 
where $\,x_1 = x$, \ $x_2 = xy\,$ and $\,x_3 = xy^3$.  
This graph is vertex-transitive, and since the $x_i$ are involutions, 
it is $3$-valent and bipartite. 
We show that $X$ is prime and has arc-type $1+1+1$.  

The vertices at distance $2$ from the identity element are the products of two of the $x_i$, 
which are all distinct: $x_1x_2 = y$, $\,x_1x_3 = y^3$, 
$\,x_2x_1 = y^{-1}$, $\,x_2x_3 = y^2$, $\,x_3x_1 = y^{-3}\,$ and $\,x_3x_2 = y^{-2}$. 
In particular, $X$ has no $4$-cycles, and hence is prime. 
Since $X$ is bipartite, it also follows that the girth of $X$ is $6$.

Next, there are $12$ paths of length $3$ starting from the identity element, 
and but only $7$ vertices at distance $3$ from the identity element, and it is an 
easy exercise to show that the coincidences are precisely the following: 
\\[-12pt] 
\begin{center} 
\begin{tabular}{lclclclclclcl} 
$x_1x_2x_1$ & \hskip -8pt $=$ & \hskip -8pt  $x_2x_3x_2$ & \hskip -8pt $=$ & \hskip -8pt  $xy^{n-1}$,  & & 
$x_1x_2x_3$ & \hskip -8pt $=$ & \hskip -8pt  $x_2x_1x_2$ & \hskip -8pt $=$ & \hskip -8pt  $x_3x_2x_1$ 
   & \hskip -8pt $=$ & \hskip -8pt  $xy^{2}$,  
\\[+3pt]
$x_1x_3x_2$ & \hskip -8pt $=$ & \hskip -8pt  $x_2x_3x_1$ & \hskip -8pt $=$ & \hskip -8pt  $xy^{n-2}$,  & & 
\quad and & & \hskip -8pt  $x_2x_1x_3$ & \hskip -8pt $=$ & \hskip -8pt  $x_3x_1x_2$ & \hskip -8pt $=$ & \hskip -8pt  $xy^{4}$.   
\\[+3pt]
\end{tabular}
\end{center} 

In particular, the edge $\{1,x_1\}$ lies in exactly four cycles of length $6$, 
namely $(1,x_1,$ $x_2x_1,x_1x_2x_1,x_3x_2,x_2)$, 
$(1,x_1,x_2x_1,x_3x_2x_1,x_2x_3,x_3)$, $(1,x_1,x_2x_1,x_3x_2x_1,x_1x_2,x_2)$ 
and $(1,x_1,x_3x_1,x_2x_3x_1,x_3x_2,x_2)$. 
Similarly, the edge $\{1,x_2\}$ lies in exactly five $6$-cycles, and the edge $\{1,x_3\}$ lies in only three.  
These numbers are different, and it follows that the edges $\{1,x_1\}$, $\{1,x_2\}$ and $\{1,x_3\}$ lie 
in distinct arc-orbits.  

Hence the arc-type of $X$ is $1+1+1$, as claimed. 
%
%
%
%
\end{proof}

The next four lemmas deals with the remaining basic arc-types we need.

\begin{Lemma}                      \label{lemma:type1(11)}
There exist more than one prime VT graphs with arc-type $1+(1+1)$. 
\end{Lemma}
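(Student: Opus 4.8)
The plan is to exhibit at least two non-isomorphic prime vertex-transitive graphs whose arc-type is $1+(1+1)$. The first one is already available: the $20$-vertex graph $X$ of Figure~\ref{fig:zerosym20a}, whose properties were worked out in detail in the proof of Lemma~\ref{lemma:typem(11)} (it is a GRR for $C_5\rtimes_3 C_4$, has one self-paired arc-orbit corresponding to the edge $\{1,20\}$ and a pair of non-self-paired arc-orbits corresponding to $\{1,2\}$ and $\{1,7\}$, so its arc-type is $1+(1+1)$). It remains only to remark that $X$ is prime: the edge $\{1,20\}$ lies in no quadrangle (this was noted in the proof of Lemma~\ref{lemma:typem(11)}), so Corollary~\ref{cor:notCartesian} applies and $X$ is prime with respect to the Cartesian product.

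For the second example I would produce a larger graph, again with arc-type $1+(1+1)$, that is manifestly not isomorphic to $X$ (for instance because it has a different order, or a different girth). The most economical route is to apply a thickened-cover construction, or one of the building blocks already established: taking $X$ as above, letting $F$ be the smaller edge-orbit (the edges $\{2i,2i+1\}$, which lie in no quadrangle), and forming $X(F,m)$ for suitable $m$ gives the family of Lemma~\ref{lemma:typem(11)} with arc-type $m+(1+1)$; this is not what is wanted here since we need a self-paired summand equal to $1$. Instead I would look for a second small Cayley graph (a GRR) of order different from $20$ that has two distinct quadrangle-counts among its three edge-orbits in the pattern "one self-paired, two paired", most easily found as a circulant-type or dihedral-type Cayley graph on a group of order, say, $24$ or $28$, verified by a short {\sc Magma} computation. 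Alternatively one can take a homological or elementary-abelian cover of $X$ of odd prime degree $q$ chosen so that no new quadrangles appear; then the cover is again prime (no quadrangles through the lifted $\{1,20\}$-edges), is vertex-transitive (the cover can be arranged to be regular over $X$ with the deck group acting), has order $20q\neq 20$, and has the same arc-type $1+(1+1)$ since the covering projection carries arc-orbits to arc-orbits and the deck transformations, together with lifts of $\Aut(X)$, realise exactly the three required orbits.

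The key steps, in order, are: (i) recall from the proof of Lemma~\ref{lemma:typem(11)} that $X$ from Figure~\ref{fig:zerosym20a} has arc-type $1+(1+1)$, and observe that the edge $\{1,20\}$ lies in no $4$-cycle, so by Corollary~\ref{cor:notCartesian} $X$ is prime; (ii) construct a second graph $X'$ of a different order with arc-type $1+(1+1)$ — either a directly verified small Cayley graph, or a suitable prime cover of $X$; (iii) check that $X'$ is vertex-transitive, that its arc-type is $1+(1+1)$ (which, in the cover case, follows from the projection of arc-orbits and the fact that lifts of the relevant automorphisms of $X$ plus the deck group already separate the three orbits), and that $X'$ is prime (again via the absence of a $4$-cycle through a lifted self-paired edge). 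Since $|V(X)|\neq|V(X')|$, the two graphs are non-isomorphic, giving more than one prime VT graph with arc-type $1+(1+1)$.

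The main obstacle I anticipate is step~(ii): one must guarantee that the second construction really does have arc-type exactly $1+(1+1)$ and not, say, $3$ or $2+1$ — that is, that the thickening or covering does not accidentally create extra symmetry identifying the two paired orbits, nor destroy the self-paired one. In the cover case this is controlled by the same quadrangle-counting and $6$-cycle-counting arguments already used for Lemma~\ref{lemma:typem(11)} (the lift of $\Aut(X)$ is enough to show the two paired orbits remain paired but distinct, because a covering automorphism taking one lifted edge to the other would project to an automorphism of $X$ taking $\{1,2\}$ to $\{1,7\}$, which does not exist); and in the direct-Cayley-graph case it is simply a finite verification. Hence the obstacle is real but routine, and no essentially new idea beyond those already in the paper is required.
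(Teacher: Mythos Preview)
Your treatment of the first example (the $20$-vertex graph of Figure~\ref{fig:zerosym20a}) matches the paper's exactly. The divergence is in producing a second example, and here your proposal remains a sketch rather than a proof. The paper takes the direct route: it simply cites further explicit examples from \cite[p.~39]{Coxeter}, namely the cubic graphs with LCF-codes $[2k,2k,-2k,-2k]^m$ for $(m,k)\in\{(13,5),(17,13),(25,7),(29,17)\}$, each of which has arc-type $1+(1+1)$ and possesses an edge lying in no $4$-cycle (hence is prime by Corollary~\ref{cor:notCartesian}). No new construction or cover-theoretic argument is needed.

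Your cover approach, by contrast, has a real gap that you correctly flag but do not close: to conclude that the cover $X'$ has arc-type exactly $1+(1+1)$ you must know its \emph{full} automorphism group, not merely the subgroup generated by lifts of $\Aut(X)$ and deck transformations. Your argument that ``a covering automorphism taking one lifted edge to the other would project to an automorphism of $X$'' constrains only fibre-preserving automorphisms; it does nothing to exclude extra automorphisms of $X'$ that do not respect the covering projection at all. Ruling those out is typically the hard part (it is exactly the crux of proving a graph is a GRR), and it is not routine --- neither quadrangle- nor $7$-cycle-counting transfers automatically to an arbitrary elementary-abelian cover. The direct computer-search alternative would also work in principle, but then you must actually exhibit a specific graph and record the verification, not merely assert that one could be found on a group of order $24$ or $28$.
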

\begin{proof}
We have already observed that the zero-symmetric graph on 20 vertices 
given in Figure \ref{fig:zerosym20a} has arc-type $1+(1+1)$, 
and because not every edge is contained in a 4-cycle, it is prime by Corollary \ref{cor:notCartesian}.
Some other examples of graphs of  arc-type  $1+(1+1)$ appear in \cite[p. 39]{Coxeter}; 
these can be described with LCF-codes $[2k,2k,-2k,-2k]^m$ for $(m,k) \in \{(13,5),(17,13),(25,7),(29,17)\}$,  
and they are all prime, since they all have edges that are not contained in 4-cycles.
\end{proof}

\begin{Lemma}                      \label{lemma:type11(11)}
There exists a prime VT graph with arc-type $1+1+(1+1)$. 
\end{Lemma}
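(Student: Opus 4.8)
The plan is to reuse the graph already exhibited in Case~(P16): let $G$ be the Frobenius group $C_5 \rtimes C_4$ of order $20$, generated by elements $a$ and $b$ with $a^5 = b^4 = 1$ and $b^{-1}ab = a^2$, let $S = \{ab^2,\, a^2b^2,\, b,\, b^{-1}\}$, and let $X = \Cay(G,S)$. First I would record the elementary facts about $X$. It is connected, since $S$ contains $b$ and $ab^2\cdot b^{-2} = a$, so $S$ generates $G$. The elements $ab^2$ and $a^2b^2$ are involutions: using $b^2ab^{-2} = a^4$ we get $(ab^2)^2 = a\cdot(b^2a)\cdot b^2 = a\cdot a^4b^2\cdot b^2 = a^5b^4 = 1$, and similarly $(a^2b^2)^2=1$, while $b$ has order $4$. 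Finally $X$ is a GRR for $G$, so $\Aut(X) = G$ and the four arc-orbits of $\Aut(X)$ are pairwise distinct and are singletons on the neighbourhood of a vertex; the two coming from the involutory generators are self-paired, while those coming from $b$ and $b^{-1}$ are paired with each other and non-self-paired. Hence $X$ has arc-type $1+1+(1+1)$, exactly as claimed in Section~\ref{sec:small_examples}.

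The substantive step is to prove that $X$ is prime with respect to the Cartesian product, and here I would follow the pattern of the primality arguments in Lemma~\ref{lemma:typem} and Lemma~\ref{lemma:type(11)(11)}. Suppose $X \cong X_1 \cp X_2$ for relatively prime non-trivial graphs $X_1$ and $X_2$. Then $X_1$ and $X_2$ are vertex-transitive by Corollary~\ref{cor:producttransitive}, hence each has automorphism group of order at least $2$, and by Corollary~\ref{cor:autoproduct} we obtain $G = \Aut(X) \cong \Aut(X_1) \times \Aut(X_2)$, a direct product of two non-trivial groups. But $G$ is \emph{directly indecomposable}: its centre is trivial (no non-identity power of $b$ commutes with $a$, since $b^{j}ab^{-j}=a^{3^{j}}$, and no element of the form $a^ib^j$ with $i \not\equiv 0 \pmod 5$ commutes with $b$, since $b^{-1}(a^ib^j)b = a^{2i}b^j$), while in any expression of a group of order $20$ as a non-trivial direct product one of the two factors has order $2$, $4$ or $5$ and is therefore abelian, forcing a non-trivial centre. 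This contradiction rules out such a decomposition, so the prime factors of $X$ are all isomorphic and $X \cong Y^{\cp k}$ for some prime graph $Y$; then $20 = |V(X)| = |V(Y)|^{k}$ forces $k = 1$, since $5$ divides $20$ only to the first power, and hence $X = Y$ is prime.

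I do not anticipate a genuine obstacle. The only place requiring a little care is the verification that $C_5 \rtimes C_4$ is directly indecomposable, which is immediate from the two observations above (trivial centre, and every factor in a non-trivial direct decomposition of an order-$20$ group being abelian). If one prefers to avoid the group-theoretic argument entirely, one can instead note via Theorem~\ref{thm:Cartesian} that any decomposition $X \cong X_1 \cp X_2$ compatible with the arc-type would force $X_1 \cong K_2$ and $X_2$ to be a cubic vertex-transitive graph on $10$ vertices with arc-type $1+(1+1)$, and no such graph exists (the smallest has $20$ vertices; see Figure~\ref{fig:zerosym20a} and Lemma~\ref{lemma:type1(11)}) --- but the indecomposability route is shorter and self-contained.
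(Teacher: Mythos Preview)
Your proof is correct. The verification that $ab^2$ and $a^2b^2$ are involutions, that the centre of $G$ is trivial, and that a non-trivial direct factor of a group of order $20$ must be abelian are all sound, and the Cartesian-power case is handled exactly as in the paper.

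The route for primality, however, is genuinely different from the paper's. The paper argues via Theorem~\ref{thm:Cartesian}: a decomposition into relatively prime factors would force the arc-type $1+1+(1+1)$ to split as a sum of two realisable arc-types, and since $1+1$ and $(1+1)$ are not realisable, the only possible split is $1$ plus $1+(1+1)$; this forces one factor to be $K_2$ and the other a $10$-vertex VT graph of arc-type $1+(1+1)$, which is then ruled out by the (computer-verified) fact that the smallest such graph has $20$ vertices. You instead invoke Corollary~\ref{cor:autoproduct} to get $\Aut(X)\cong\Aut(X_1)\times\Aut(X_2)$ and derive a contradiction from the direct indecomposability of the Frobenius group $C_5\rtimes C_4$. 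Your approach has the advantage of being fully self-contained: it does not rely on the external census fact about $10$-vertex cubic VT graphs, only on an elementary centre calculation. The paper's approach, on the other hand, stays entirely within the arc-type framework developed in Section~\ref{sec:products}, which is thematically tidier. You even sketch the paper's argument as an alternative at the end, so you are aware of both.
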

\begin{proof}
The graph on 20 vertices given in Figure \ref{fig:11_1_1} has arc-type $1+1+(1+1)$. 
Now suppose that this graph is not prime.  Then since $20$ is not a non-trivial power of any integer, 
the graph must be the Cartesian product of two smaller connected VT graphs that are 
relatively prime.  Then since there are no VT graphs with arc-type $1+1$ or $(1+1)$, 
it must be a Cartesian product of two connected VT graphs with arc-types $1$ and $1+(1+1)$. 
The former has to be $K_2$, and so the other is a VT graph of order $10$ with arc-type $1+(1+1)$. 
But no such graph exists --- in fact, the smallest VT graph with arc-type $1+(1+1)$ has 20 vertices.
Hence the given graph is prime.
\end{proof}

\begin{Lemma}                      \label{lemma:type1111}
There exists a prime VT graph with arc-type $1+1+1+1$. 
\end{Lemma}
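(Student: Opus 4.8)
The plan is to let $X$ be the $16$-vertex graph of Figure~\ref{fig:1111} --- the Cayley graph (indeed a GRR) for the dihedral group $D_8$ with generating set $S = \{x, xy, xy^2, xy^4\}$, which has arc-type $1+1+1+1$ --- and to prove that $X$ is prime with respect to the Cartesian product. The argument follows the pattern of the proof of Lemma~\ref{lemma:type11(11)}, with one extra complication: since the order $16 = 2^4 = 4^2$ is a proper power, we cannot (as we could for order $20$) immediately reduce to a Cartesian product of relatively prime factors, so the case where all prime Cartesian factors of $X$ are mutually isomorphic has to be handled on its own.

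First I would suppose, for a contradiction, that $X$ is not prime. By the uniqueness of the Cartesian prime factorisation of a connected graph \cite[Theorem~4.9]{ImrichKlavzar}, $X$ is a Cartesian product of at least two connected prime graphs; grouping isomorphic factors, we may write $X = Y_1 \cp \dots \cp Y_s$ where the $Y_i$ are non-trivial, connected and pairwise relatively prime, each being a Cartesian power of a single prime graph. By Corollary~\ref{cor:producttransitive} each $Y_i$ is vertex-transitive, and by Theorem~\ref{thm:Cartesian} we have $1+1+1+1 = {\rm at}(X) = {\rm at}(Y_1) + \dots + {\rm at}(Y_s)$, so each ${\rm at}(Y_i)$ is a sub-multiset of $\{1,1,1,1\}$. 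If $s \ge 2$, then because no vertex-transitive graph has arc-type $1+1$ (a connected graph of valency $2$ is a cycle, hence arc-transitive), the only possibilities are that each ${\rm at}(Y_i) = 1$, forcing $s = 4$ and every $Y_i \cong K_2$, or that $s = 2$ with ${\rm at}(Y_1) = 1$ and ${\rm at}(Y_2) = 1+1+1$. The first is impossible since $K_2$ is not relatively prime to itself (and since $K_2 \cp K_2 \cp K_2 \cp K_2$ is the $4$-cube, which is arc-transitive of arc-type $4$). In the second, $Y_1 \cong K_2$ while $Y_2$ is a vertex-transitive graph of valency $3$, arc-type $1+1+1$, and order $16/2 = 8$ --- contradicting the fact (recorded in Section~\ref{sec:small_examples}) that the smallest vertex-transitive graph with arc-type $1+1+1$ has $18$ vertices.

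Finally I would dispose of the case $s = 1$, where $X$ is a Cartesian power $P^{\,a}$ of a single prime graph $P$ with $a \ge 2$. Comparing orders gives $|V(P)|^{\,a} = 16$ and comparing valencies (via Proposition~\ref{prop:valency}) gives $a \cdot d(P) = 4$, where $d(P)$ denotes the valency of $P$; together these force either $a = 4$ and $P \cong K_2$, so that $X$ is the $4$-cube with arc-type $4 \ne 1+1+1+1$, or $a = 2$ and $P$ a connected vertex-transitive graph of valency $2$ on $4$ vertices, i.e.\ $P \cong C_4 \cong K_2 \cp K_2$, contradicting the primality of $P$. Either way we obtain a contradiction, so $X$ is prime. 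I expect the only genuinely delicate point to be the subcase $s = 2$: the argument relies on knowing that no vertex-transitive graph of order $8$ has arc-type $1+1+1$ (equivalently, that no group of order $8$ has a graphical regular representation of valency $3$), which is what rules out the spurious factorisation $X \cong K_2 \cp Z$. The rest is routine bookkeeping with the orders and valencies of Cartesian factors.
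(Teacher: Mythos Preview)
Your proof is correct and follows essentially the same approach as the paper's: both take the $16$-vertex GRR of $D_8$ from Figure~\ref{fig:1111}, rule out a decomposition into two relatively prime factors by appealing to the nonexistence of a VT graph of order $8$ with arc-type $1+1+1$, and then separately dispose of the Cartesian-power case. Your treatment is somewhat more explicit (you organise the argument around the number $s$ of distinct prime-power components and exhaust the valency/order constraints), whereas the paper handles the power case by observing directly that $C_4 \cp C_4 \cong K_2^{\,4}$ is arc-transitive of type $4$; but the substance is the same.
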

\begin{proof}
The graph on 16 vertices given in Figure \ref{fig:1111} has arc-type $1+1+1+1$.
Also this graph cannot be the Cartesian product of two smaller connected VT graphs that are 
relatively prime, by a similar argument to the one given in the proof of Lemma~\ref{lemma:type11(11)}, 
because there is no VT graph of order $8$ with arc-type $1+1+1$. 
(The smallest VT graph with arc-type $1+1+1$ has order $18$.) 
Finally, if it is the Cartesian power of some smaller graph, then it has to be the Cartesian 
square of $C_4$ (or the Cartesian $4$th power of $K_2$, which is isomorphic to $C_4 \cp C_4$), 
but this graph is arc-transitive, with arc-type $4$.   Hence the given graph is prime. 
\end{proof}


\begin{Lemma}                      \label{lemma:type(11)(11)(11)}
There exists a prime VT graph with arc-type $(1+1)+(1+1)+(1+1)$. 
\end{Lemma}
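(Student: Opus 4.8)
The plan is to realise this arc-type as a graphical regular representation of valency~$6$. Recall from Section~\ref{sec:arc-types} that a zero-symmetric graph of valency~$6$ has arc-type built solely from summands $1$ and $(1+1)$, and it has arc-type $(1+1)+(1+1)+(1+1)$ exactly when its three generating elements, each taken together with its inverse, are non-involutions lying in three distinct edge-orbits. So it suffices to produce a group $G$ and non-involutions $x,y,z \in G$ with $x^{\pm1},y^{\pm1},z^{\pm1}$ pairwise distinct and $\langle x,y,z\rangle = G$, such that $X = \Cay(G,\{x^{\pm1},y^{\pm1},z^{\pm1}\})$ is a GRR and is prime with respect to the Cartesian product. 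Building on Lemma~\ref{lemma:type(11)(11)}, I would take $G = C_p \rtimes_k C_6 = \langle\, a,b \mid a^6 = b^p = 1,\ a^{-1}ba = b^k\,\rangle$, where $p$ is a sufficiently large prime with $p \equiv 1$ mod $6$ and $k$ has order $6$ mod~$p$, together with $x = a$, $y = ba^2$ and $z = b$; these have orders $6$, $3$ and $p$, none is an involution, the six elements $x^{\pm1},y^{\pm1},z^{\pm1}$ are manifestly distinct, and $\langle a,b\rangle = G$, so $X$ is a connected $6$-valent vertex-transitive graph of order $6p$. Since $G$ acts regularly on $V(X)$, the edges $\{1,x\}$, $\{1,y\}$, $\{1,z\}$ lie in three distinct $G$-orbits, and the corresponding arc-orbits are non-self-paired because $x,y,z$ are non-involutions; hence, once we know $\Aut(X) = G$, the arc-type of $X$ is $(1+1)+(1+1)+(1+1)$.

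The heart of the proof is therefore to show that $\Aut(X) = G$, i.e.\ that the stabiliser $\Aut(X)_1$ is trivial, and I would do this by the local analysis used in the proof of Lemma~\ref{lemma:type(11)(11)}. The relation $y^3 = 1$ forces every $y$-edge to lie in a triangle, while for $p > 6$ the monochromatic subgraphs carried by $x$ and by $z$ are unions of $6$-cycles and of $p$-cycles respectively and contain no triangle, so an automorphism fixing the vertex~$1$ preserves the pair $\{y,y^{-1}\}$ among its neighbours; counting common neighbours of pairs of neighbours of~$1$, and counting short cycles through the incident edges, then separates the remaining four neighbours and shows that no two neighbours of~$1$ can be interchanged. (As in Lemma~\ref{lemma:type(11)(11)}, one can alternatively rule out a non-trivial permutation of the neighbourhood by tracking the automorphism of $G$ that it would induce.) It follows that every automorphism fixing~$1$ fixes the arc $(1,s)$ for each neighbour~$s$, hence fixes $(s,1)$ and so acts trivially on the neighbourhood of~$s$, and then by connectedness it is the identity. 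Thus $X$ is a GRR, with arc-type $(1+1)+(1+1)+(1+1)$ as claimed; for any fixed admissible $p$ this can also be confirmed directly with {\sc Magma}.

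Finally, $X$ is prime, by the argument used for Lemma~\ref{lemma:typem} and Lemma~\ref{lemma:type(11)(11)}. Since $p$ is a prime larger than $3$, the order $|V(X)| = 6p$ is square-free, so $X$ cannot be a non-trivial Cartesian power of a smaller graph. And if $X \cong A \cp B$ with $A$ and $B$ non-trivial connected relatively prime graphs, then $A$ and $B$ are vertex-transitive by Corollary~\ref{cor:producttransitive}, and Theorem~\ref{thm:Cartesian} gives ${\rm at}(X) = {\rm at}(A) + {\rm at}(B)$ with both ${\rm at}(A)$ and ${\rm at}(B)$ non-trivial marked partitions; but every way of splitting $(1+1)+(1+1)+(1+1)$ into two such partitions leaves a factor of arc-type $(1+1)$, which no vertex-transitive graph possesses. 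Hence $X$ is prime. The main obstacle is the first part --- establishing triviality of the vertex-stabiliser --- since distinguishing the six neighbours of a vertex by automorphism-invariant local data is exactly where the work lies; the remaining steps follow patterns already established in the paper.
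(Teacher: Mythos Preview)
Your approach differs from the paper's: you build on the metacyclic group $C_p \rtimes_k C_6$ from Lemma~\ref{lemma:type(11)(11)} by adjoining the extra generator $b$, whereas the paper uses $G = \SL(2,3)$ (of order $24$) with connection set $\{x^{\pm 1}, y^{\pm 1}, (xy)^{\pm 1}\}$ and carries out a complete, self-contained local analysis of the neighbourhood of the identity to prove that the vertex-stabiliser is trivial. Your primality argument is essentially the same as the paper's.

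The genuine gap is precisely where you flag it: you do not actually establish that $\Aut(X)_1$ is trivial. First, the triangle step is underspecified --- noting that the monochromatic $x$- and $z$-subgraphs are unions of $6$- and $p$-cycles does not by itself show that no $x$- or $z$-edge lies in a triangle of $X$; you must also rule out \emph{mixed} triangles (this is a short check, but it is needed before you can conclude that $\{y,y^{-1}\}$ is preserved). More seriously, the assertion that ``counting common neighbours \dots\ and counting short cycles \dots\ separates the remaining four neighbours'' $a, a^{-1}, b, b^{-1}$ is made without any supporting calculation. In the $4$-valent case the paper already had to work carefully to exclude the single candidate involution on the neighbourhood, and here there are more permutations to rule out; nothing you have written shows, for instance, that no automorphism swaps $b$ with $b^{-1}$ while fixing $a$ and $a^{-1}$. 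Deferring to a {\sc Magma} check for one $p$ would settle the bare existence claim, but then one might as well verify the paper's $24$-vertex example directly. As it stands, your proposal is a reasonable plan rather than a proof.
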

\begin{proof}
Let $X$ be the Cayley graph $\Cay(G,S)$ for the group $G=SL(2,3)$ 
of all $2 \times 3$ matrices of determinant $1$ over $\bZ_3$, 
given by the set $S = \{x,x^{-1},y,y^{-1},xy,(xy)^{-1}\}$, where 
$$
x = \left( \begin{array}{rr} 1 & 0 \\ 1 & 1 \end{array} \right) 
\quad \hbox{and} \quad 
y = \left( \begin{array}{rr} 1 & 0 \\ -1 & 0 \end{array} \right). 
$$
These two elements generate $G$ and satisfy the relations 
$x^3 = y^4 = 1$ and $yx^{-1}=(xy)^2$, which are defining relations for $G$. 
This Cayley graph $X$ is $6$-valent, with girth $3$, and it is not difficult to show that 
its diameter is $3$.  

In the neighbourhood of the identity element $1$ in $X$, 
there is an edge between $y$ and $xy$, and a path of length $3$ from $y^{-1}$ 
to $(xy)^{-1}$ via $x$ and $x^{-1}$, but there are no other edges (between vertices of that neighbourhood). 
Also the vertices $y$ and $y^{-1}$ have another common neighbour, namely $y^2$, 
and the vertices $xy$ and $(xy)^{-1}$ have another common neighbour, namely $y^{-1}xy$ ($=(xyx)^{-1}$), 
but $y$ and $(xy)^{-1}$ have no other common neighbour, and $y^{-1}$ and $xy$ have 
no other common neighbour.  
It follows that the stabiliser in $\Aut(X)$ of the vertex $1$ either fixes all its neighbours, 
or interchanges $y$ with $xy$, and $y^{-1}$ with $(xy)^{-1}$, and $x$ with $x^{-1}$. 
But the number of neighbours of the vertex $y$ that are at distance from $x$ is $4$, 
while the number of neighbours of the vertex $xy$ that are at distance from $x^{-1}$ is only $3$, 
so the latter cannot happen, and hence the stabiliser in $\Aut(X)$ of $1$ acts trivially on 
the neighbourhood of $1$.  By vertex-transitivity, the same holds at every vertex, 
and then by induction and connectedness, it follows that the stabiliser of every vertex is trivial. 

Thus $\Aut(X) = G$, so $X$ is a GRR, and then since the edges $\{1,s\}$ and $\{1,s^{-1}\}$ 
lie in the same edge-orbit for each $s \in S$, we find that $X$ has type $(1+1)+(1+1)+(1+1)$. 

Finally, $X$ cannot be the Cartesian product of two smaller connected VT graphs that are 
relatively prime, since there are no VT graphs with arc-type $(1+1)$, 
and $X$ cannot be a Cartesian power of some smaller VT graph, since its order $24$ is 
not a non-trivial power of any integer.  Hence $X$ is prime.
\end{proof}

The graph used in the above proof is shown in Figure~\ref{fig111111}. 
\begin{figure}[h]
\centering
\includegraphics[width=5cm]{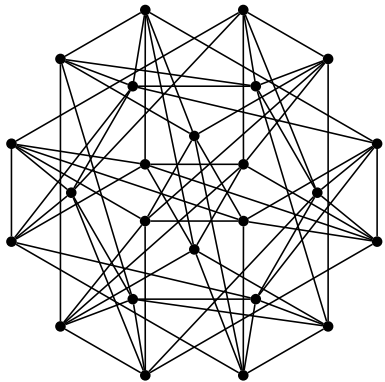}
\caption{A VT graph with arc-type $(1+1)+(1+1)+(1+1)$, on $24$ vertices}
\label{fig111111}
\end{figure}

\section{Realisability}
\label{sec:realise}

We say that a marked partition $\Pi$ is \emph{realisable} if there
exist a vertex-transitive graph with arc-type $\Pi$. 
Recall that the marked partitions  $1+1$ and $(1+1)$ are not realisable 
(as we explained in the introductory section), 
but on the other hand, we showed in Section \ref{sec:building-blocks} 
that some other marked partitions with few summands are realisable 
by infinitely many vertex-transitive graphs. 

In this final section we prove that all other marked partitions are realisable. 
We then find (as a corollary) that all standard partitions except
$1+1$ are realisable as the edge-type of a vertex-transitive graph.

\begin{Theorem}                    \label{thm:realisable}
Every  marked partition other than $(1+1)$ and $1+1$ is realisable 
as the arc-type of a vertex-transitive graph.
\end{Theorem}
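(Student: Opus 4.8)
The plan is to realise $\Pi$ as the arc-type of a Cartesian product of prime graphs, using Theorem~\ref{thm:Cartesian}. Since two prime graphs are relatively prime exactly when they are non-isomorphic, it suffices to write $\Pi$ as a sum $\tau_1+\dots+\tau_r$ of marked partitions, each of which is the arc-type of some prime vertex-transitive graph, and to be able to choose these graphs pairwise non-isomorphic; the Cartesian product of those graphs is then connected, vertex-transitive, and has arc-type $\tau_1+\dots+\tau_r=\Pi$ by Theorem~\ref{thm:Cartesian}. When the $\tau_i$ are pairwise distinct the corresponding graphs are automatically non-isomorphic, and whenever a value of $\tau_i$ is repeated it will be one for which Section~\ref{sec:building-blocks} supplies enough prime graphs (infinitely many in almost all cases) to make a non-isomorphic choice, so this last requirement never causes trouble.

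The building blocks are exactly the arc-types realised by prime graphs in Section~\ref{sec:building-blocks}: $m$ and $(m+m)$ for every $m\ge 2$ (Lemmas~\ref{lemma:typem} and~\ref{lemma:type(mm)}); $m+1$, $m+(1+1)$, $1+(m+m)$ and $(m+m)+(1+1)$ for every $m\ge 2$ (Lemmas~\ref{lemma:typem1}, \ref{lemma:typem(11)}, \ref{lemma:type1(mm)}, \ref{lemma:type(mm)(11)}); the arc-type $1$ realised by $K_2$; and the small arc-types $1+1+1$, $1+(1+1)$, $(1+1)+(1+1)$, $1+1+(1+1)$, $1+1+1+1$ and $(1+1)+(1+1)+(1+1)$ (Lemmas~\ref{lemma:type111}, \ref{lemma:type1(11)}, \ref{lemma:type11(11)}, \ref{lemma:type1111}, \ref{lemma:type(11)(11)}, \ref{lemma:type(11)(11)(11)}). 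Of these, $1+1+1+1$, $1+1+(1+1)$ and $(1+1)+(1+1)+(1+1)$ are each guaranteed by only one prime graph, and $1+(1+1)$ by two, so each of the first three may be used at most once and the fourth at most twice in any decomposition.

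Given $\Pi$, I would first separate its summands into the \emph{large} ones (the unbracketed $n_i\ge 2$ and the bracketed $(m_j+m_j)$ with $m_j\ge 2$) and the remaining ones, say $a$ copies of $1$ and $b$ copies of $(1+1)$; write $\pi=\underbrace{1+\dots+1}_{a}+\underbrace{(1+1)+\dots+(1+1)}_{b}$ for the residual. Every large summand is already the arc-type of a prime graph from one of the infinite families of Lemmas~\ref{lemma:typem}, \ref{lemma:type(mm)}, so it can be peeled off as its own factor, provided what remains is itself realisable. If $\pi$ is empty, $\Pi$ is a sum of large summands and we are done. If $\pi=1+1$ or $\pi=(1+1)$ --- the two forbidden partitions --- then since $\Pi\ne 1+1$ and $\Pi\ne(1+1)$ there is at least one large summand, and we absorb $\pi$ together with one such summand: for $\pi=1+1$ we use $n+1$ (Lemma~\ref{lemma:typem1}) or $1+(m+m)$ (Lemma~\ref{lemma:type1(mm)}) according as that summand is unbracketed or bracketed, leaving a single $1$ for $K_2$; for $\pi=(1+1)$ we use $n+(1+1)$ (Lemma~\ref{lemma:typem(11)}) or $(m+m)+(1+1)$ (Lemma~\ref{lemma:type(mm)(11)}). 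In all other cases $\pi$ is a genuine residual with $(a,b)\notin\{(0,0),(2,0),(0,1)\}$, to be realised on its own.

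The remaining, and main, task is then a finite case analysis realising such a $\pi$ from the small building blocks. The rough scheme: cover the $(1+1)$'s two at a time with $(1+1)+(1+1)$, inserting one copy of $(1+1)+(1+1)+(1+1)$ when $b$ is odd; cover the $1$'s three at a time with $1+1+1$, inserting one copy of $1+1+1+1$ and/or one $K_2$ to absorb the residue of $a$ modulo $3$; and when this would leave an un-realisable remnant --- essentially when $a$ is small and odd, or when a single $(1+1)$ is stranded --- instead pair a $1$ with a $(1+1)$ using $1+(1+1)$, or use $1+1+(1+1)$, which is available precisely because $(a,b)$ is neither $(2,0)$ nor $(0,1)$. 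I expect the bookkeeping here to be the main obstacle: one has to check that the one-copy building blocks always suffice, that $1+(1+1)$ is never needed more than twice, and --- crucially --- that the only residuals $\pi$ that genuinely cannot be decomposed are $1+1$ and $(1+1)$, matching the stated exceptions. Everything outside this combinatorial core follows formally from Theorem~\ref{thm:Cartesian}.
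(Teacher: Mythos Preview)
Your proposal is correct and takes essentially the same approach as the paper: decompose $\Pi$ into building-block arc-types from Section~\ref{sec:building-blocks} and apply Theorem~\ref{thm:Cartesian} to a Cartesian product of pairwise non-isomorphic prime graphs. Your organisation differs cosmetically --- you peel off all large summands first and then treat the residual $\pi$ of $1$'s and $(1+1)$'s, whereas the paper splits into the three cases $s=0$, $t=0$, and $s,t>0$ before handling the small summands --- but the actual decompositions and the handling of the awkward sub-cases (absorbing a stranded $1+1$ or $(1+1)$ into a large summand via Lemmas~\ref{lemma:typem1}--\ref{lemma:type(mm)(11)}, using two copies of $1+(1+1)$ when $(a,b)=(2,2)$, etc.) coincide with the paper's.
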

%
%
\begin{proof} 
Let $\Pi = n_1 + \dots + n_t + (m_1 + m_1) + \dots + (m_s + m_s)$
be a marked partition of an integer $d \ge 2$, different from $1+1$ and $(1+1)$. 
We may assume that $n_1 \ge \dots \ge n_t$ and $m_1 \ge \dots \ge m_s$. 
%
If $d \le 4$, then we know from the examples given in Section \ref{sec:small_examples} 
that $\Pi$ is realisable, and therefore we may assume that $d \ge 5$ when necessary.

We will show how to find a VT graph with arc-type $\Pi$, 
by taking the Cartesian product of prime graphs with smaller degrees and 
simpler arc-types, chosen so that the sum of their arc-types is $\Pi$. 
To do this, we consider separately the two cases where $s = 0$ and $t = 0$, 
with a focus on the number of $n_i$ or $m_j$ that are equal to $1$, respectively, 
and then we combine these two cases in order to show how to handle all possibilities. 

\medskip\smallskip\noindent 
Case (A): \ $s = 0$, \ with $\Pi = n_1 + \dots + n_t$.  
\smallskip

Let $k$ be the number of $n_i$ that are equal to $1$, so that $n_i > 1$ for 
$1 \le i \le t-k$, and $n_t = 1$ for $t-k+1 \le i \le t$. 

If $k = 0$, then by Lemma \ref{lemma:typem} we can find $t$ pairwise non-isomorphic 
prime VT graphs with arc-types $n_1,\dots,n_t$, and by Theorem  \ref{thm:Cartesian}, 
their Cartesian product is a VT graph with  arc-type $\Pi$. 

If $k = 1$, we can take the Cartesian product of $t-2$ pairwise non-isomorphic 
prime VT graphs with arc-types $n_1,\dots,n_{t-2}$, and one prime VT graph 
with arc-type $n_{t-1}+1$, as given by Lemma \ref{lemma:typem1}, and again, 
this is a VT graph with  arc-type $\Pi$. 

If $k = 2$, we can take the Cartesian product of $t-3$ pairwise non-isomorphic 
prime VT graphs with arc-types $n_1,\dots,n_{t-3}$, one prime VT graph 
with arc-type $n_{t-2}+1$, and the graph $K_2$. 

Finally, if $k \ge 3$, we can take the Cartesian product of $t-k$ pairwise non-isomorphic  
prime VT graphs with arc-types $n_1,\dots,n_{t-k}$, plus  \\[-21pt] 
\begin{itemize} 
\item[(i)] $k/3$ pairwise non-isomorphic prime VT graphs with arc-type $1+1+1$ taken 
from Lemma \ref{lemma:type111}, when $k \equiv 0$ mod $3$, or \\[-24pt] 
\item[(ii)] one prime VT graph of type $1+1+1+1$ from Lemma \ref{lemma:type1111}, 
and $(k-4)/3$ pairwise non-isomorphic  prime VT graphs with arc-type $1+1+1$, 
when $k \equiv 1$ mod $3$, or \\[-24pt] 
\item[(iii)] one copy of $K_2$, and one prime VT graph of type $1+1+1+1$, and $(k-5)/3$ pairwise 
non-isomorphic  prime VT graphs with arc-type $1+1+1$, when $k \equiv 2$ mod $3$.  \\[-22pt] 
\end{itemize}

\medskip\smallskip\noindent 
Case (B): \ $t = 0$, \ with $\Pi = (m_1 + m_1) + \dots + (m_s + m_s)$.  
\smallskip

Let $\ell$ be the number of $m_j$ that are equal to $1$, so that $m_j > 1$ for 
$1 \le i \le s-\ell$, and $m_j= 1$ for $s-\ell+1 \le i \le s$. 

If $\ell = 0$, then by Lemma \ref{lemma:type(mm)} we can find $s$ pairwise non-isomorphic 
VT graphs with arc-types $(m_1 + m_1),\dots,(m_s + m_s)$, and then their Cartesian product is 
a VT graph with  arc-type $\Pi$. 

If $\ell = 1$, we can take the Cartesian product of $s-2$ pairwise non-isomorphic VT graphs with 
arc-types $(m_1 + m_1),\dots,(m_{s-2} + m_{s-2})$, and one prime VT graph with 
arc-type $(m_{s-1} + m_{s-1})+(1+1)$ from Lemma \ref{lemma:type(mm)(11)}. 

Finally, if $\ell \ge 2$, we can take the Cartesian product of $s-\ell$ pairwise non-isomorphic 
prime VT graphs with arc-types $(m_1 + m_1),\dots,(m_{s-\ell} + m_{s-\ell})$, 
plus  \\[-22pt] 
\begin{itemize} 
\item[(i)] $\ell/2$ pairwise non-isomorphic  prime VT graphs with arc-type $(1+1)+(1+1)$ 
from Lemma \ref{lemma:type(11)(11)}, when $\ell$ is even, or \\[-24pt] 
\item[(ii)] one prime VT graph of type $(1+1)+(1+1)+(1+1)$ from Lemma \ref{lemma:type(11)(11)(11)}, 
and $(\ell-3)/2$ pairwise non-isomorphic  prime VT graphs with arc-type $(1+1)+(1+1)$, 
when $\ell$ is odd.  \\[-24pt] 
\end{itemize} 

\medskip\smallskip\noindent 
Case (C): \ $s > 0$ and $t > 0$. 
\smallskip

In this case, we can write $\Pi$ as the sum of the marked partitions $\Pi_1 = n_1 + \dots + n_t$ 
and $\Pi_2 = (m_1 + m_1) + \dots + (m_s + m_s)$, and we can deal with most possibilities 
by simply taking a Cartesian product of a VT graph $X_1$ with arc-type $\Pi_1$ 
and a VT graph $X_2$ with arc-type $\Pi_2$. 
Note that case (A) uses the prime VT graphs produced by Lemmas \ref{lemma:typem}, 
\ref{lemma:typem1}, \ref{lemma:type111} and  \ref{lemma:type1111}, plus the graph $K_2$,
while case (B) uses the prime VT graphs produced by Lemmas \ref{lemma:type(mm)}, 
\ref{lemma:type(mm)(11)}, \ref{lemma:type(11)(11)} and \ref{lemma:type(11)(11)(11)}. 
These prime graphs can be chosen to be pairwise non-isomorphic, 
and hence pairwise relatively prime, in which case $X_1$ and $X_2$ are relatively prime, 
and therefore $X_1 \cp X_2$ has arc-type $\Pi_1+\Pi_2 = \Pi$. 

All that remains for us to do is to deal with the exceptional situations, 
namely those where the sum of the $n_i$ or the sum of the $m_j$ is so small that 
no suitable candidate can be found for $X_1$ or $X_2$.  
There are two exceptional possibilities for $\Pi_1$ not covered in case (A), 
namely $1$ and $1+1$, and just one for $\Pi_2$ in case (B), namely $(1+1)$.  

If $\Pi_1 = 1$, then we can take a Cartesian product of $K_2$ with a VT graph 
produced in case (B), since we are assuming that $d \ge 5$.  

If $\Pi_1 = 1+1$, then there are two sub-cases to consider, depending on the number $\ell$ 
of terms $m_j$ that are equal to $1$. 
If $\ell < s$, then we can adapt the approach taken in case (B) by replacing the 
prime VT graph of type $(m_1 + m_1)$ by a prime VT graph of type $1+(m_1 + m_1)$ 
from Lemma \ref{lemma:type1(mm)}, and then also add a single copy of $K_2$ as above. 
On the other hand, if $\ell = s$, so that $\Pi_2$ is a sum of $s$ terms of the form $(1+1)$, 
then we take \\[-22pt] 
\begin{itemize} 
\item[(i)] two non-isomorphic  prime VT graphs with arc-type $1+(1+1)$ from Lemma \ref{lemma:type1(11)}, 
when $s = 2$, or \\[-24pt] 
\item[(ii)] a prime VT graph of type $1+1+(1+1)$ from Lemma \ref{lemma:type11(11)},  
and a $2(s-1)$-valent VT graph of type $(1+1)+\dots+(1+1)$ as found in case (B) when $s \ge 3$. \\[-22pt] 
\end{itemize} 

Finally, if $\Pi_2 = (1+1)$, again there are two sub-cases to consider, this time depending 
on the number $k$ of terms $n_i$ that are equal to $1$. 
If $k < t$, then we can adapt the approach taken in case (A) by replacing the prime VT graph 
of type $n_1$ by a prime VT graph of type $n_1+(1 + 1)$ from Lemma \ref{lemma:typem(11)}. 
On the other hand, if $k = t$, so that $\Pi_1$ is a sum of $t$ terms all equal to $1$, 
then we take \\[-22pt] 
\begin{itemize} 
\item[(i)] a single copy of $K_2$ and a prime VT graph with arc-type $1+1+(1+1)$ from Lemma \ref{lemma:type11(11)}, when $t = 3$, or \\[-24pt] 
\item[(ii)] a prime VT graph of type $1+(1+1)$ and a $(t-1)$-valent VT graph of type $1+\dots+1$ 
as found in case (A) when $t \ge 4$. \\[-22pt] 
\end{itemize} 

This completes the proof.
\end{proof}

\begin{Corollary}
With the exception of $1+1$ $($for the integer $2)$, 
every standard partition of a positive integer is realisable as the edge-type 
of a vertex-transitive graph. 

\end{Corollary}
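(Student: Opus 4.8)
The plan is to reduce the statement to Theorem~\ref{thm:realisable} by exploiting the elementary relationship between arc-types and edge-types recorded in Section~\ref{sec:arc-types}: the edge-type of a vertex-transitive graph is obtained from its arc-type simply by replacing each bracketed summand $(m_j+m_j)$ with the single unbracketed summand $2m_j$, leaving the other summands unchanged.

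First I would take an arbitrary standard partition $\Pi = \ell_1 + \ell_2 + \dots + \ell_k$ of a positive integer $d$, with $\Pi \neq 1+1$, and form the marked partition $\Pi'$ having exactly the same summands $\ell_1, \dots, \ell_k$ but with no pair placed in parentheses. This $\Pi'$ is a legitimate marked partition of $d$, and by the rule above its associated edge-type is $\Pi$ itself. I would then invoke Theorem~\ref{thm:realisable}: the only marked partitions that fail to be realisable as arc-types are $(1+1)$ and $1+1$. Since $\Pi'$ contains no parentheses, it cannot equal $(1+1)$, so the sole situation in which Theorem~\ref{thm:realisable} does not apply is $\Pi' = 1+1$, that is, $d = 2$ with $\Pi = 1+1$ --- precisely the excluded case. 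In every other case Theorem~\ref{thm:realisable} supplies a vertex-transitive graph $X$ with arc-type $\Pi'$, and because $\Pi'$ has no bracketed summands, the edge-type of $X$ equals $\Pi' = \Pi$, as required.

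To round out the argument I would also confirm that the exceptional partition $1+1$ genuinely cannot occur as an edge-type, since a connected regular graph of valency $2$ is a cycle, which is arc-transitive and hence edge-transitive, with edge-type $2$. There is really no serious obstacle here, as essentially all of the work has already been done inside Theorem~\ref{thm:realisable}; the one thing to verify with care is that dropping all parentheses from a marked partition never produces the forbidden marked partition $1+1$ unless the underlying standard partition was itself $1+1$, which is immediate. Hence I expect the proof to be a single short paragraph.
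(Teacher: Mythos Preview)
Your proposal is correct and follows essentially the same approach as the paper: form the bracket-free marked partition $\Pi' = \ell_1 + \dots + \ell_k$, apply Theorem~\ref{thm:realisable} to obtain a VT graph with arc-type $\Pi'$, and observe that since all arc-orbits are self-paired the edge-type coincides with $\Pi'$. The paper's own proof is a one-sentence remark making exactly this observation.
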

\begin{proof}
This follows easily from Theorem \ref{thm:realisable}. 
In fact, every such partition $n_1 +\dots + n_t$ (except $1+1$) occurs as 
both the edge-type and the arc-type of some VT graph 
with the property that all of its arc-orbits are self-paired. 
\end{proof}

\section*{Acknowledgements}

We would like to thank  Primo\v{z} Poto\v{c}nik and Gabriel Verret for fruitful discussions 
about vertex-transitive graphs, and Matja\v{z} Konvalinka and Marko Petkov\v{s}ek for their 
advice about partitions.
Also we acknowledge the use of {\sc Magma} \cite{Magma} in constructing examples
of graphs with a given arc-type and small valency, as well as in testing various cases. 
In the course of experimenting, Mathematica and Sage \cite{Sage} were used as well.

This work was supported in part by the N.Z. Marsden Fund (via grant UOA1323), 
the ARRS (via grant  P1--0294), the European Science Foundation (Eurocores Eurogiga, GReGAS (N1--0011))
and the Air Force Office of Scientific Research, Air Force Material Command, USAF (under Award No. FA9550-14-1-0096).


\bigskip

\noindent
{\sc Marston D.E. Conder,} \\  
Department of Mathematics, University of Auckland,\\
Private Bag 92019, Auckland 1142, {\sc New Zealand}\\
Email: \texttt{m.conder@auckland.ac.nz}\\

\noindent
{\sc Toma\v z Pisanski,} \\ 
UP IAM and UP FAMNIT, University of Primorska, \\
Muzejski trg 2, 6000 Koper, {\sc Slovenia}\\
Email: \texttt{tomaz.pisanski@fmf.uni-lj.si}\\

\noindent
{\sc Arjana \v Zitnik}\\ 
Faculty for Mathematics and Physics, University of Ljubljana, and IMFM,\\
Jadranska 19, 1000 Ljubljana, {\sc Slovenia}\\
Email: \texttt{arjana.zitnik@fmf.uni-lj.si}

\end{document}